\crefname{equation}{}{}
\DeclareMathOperator*{\argmin}{arg\,min}
\DeclareSymbolFont{symbolsC}{U}{pxsyc}{m}{n}
\DeclareMathSymbol{\medcircle}{\mathbin}{symbolsC}{7}
\crefname{algocf}{Algorithm}{Algorithms}
\crefname{equation}{}{} 
\colorlet{refkey}{orange!20}
\colorlet{labelkey}{blue!30}
\crefname{algocf}{Algorithm}{Algorithms}
\numberwithin{equation}{section}
\newtheorem{theorem}{Theorem}[section]
\newtheorem{proposition}[theorem]{Proposition}
\newtheorem{lemma}[theorem]{Lemma}
\crefname{claim}{Claim}{Claims}
\newtheorem{corollary}[theorem]{Corollary}
\newtheorem{remark}[theorem]{Remark}
\newtheorem*{question*}{Question}
\newtheorem{fact}[theorem]{Fact}
\theoremstyle{definition}
\newtheorem{definition}[theorem]{Definition}
\newtheorem*{definition*}{Definition}
\newcommand{\snorm}[1]{\lVert#1\rVert}
\newcommand{\sang}[1]{\langle #1 \rangle}
\newcommand{\mb}{\mathbb}
\newcommand{\mbm}{\mathbbm}
\newcommand{\mc}{\mathcal}
\newcommand{\mf}{\mathfrak}
\newcommand{\msf}{\mathsf}
\newcommand{\on}{\operatorname}
\newcommand{\eps}{\varepsilon}
\newcommand{\cS}{\ensuremath{\mathcal S}}
\let\originalleft\left
\let\originalright\right
\renewcommand{\left}{\mathopen{}\mathclose\bgroup\originalleft}
\renewcommand{\right}{\aftergroup\egroup\originalright}
\newif\ifpublic
\newcommand{\ignore}[1]{}
\newcommand{\bea}{\begin{eqnarray}}
\newcommand{\eea}{\end{eqnarray}}
\newcommand{\<}{\langle}
\renewcommand{\>}{\rangle}
\newcommand{\wt}{\widetilde}
\newcommand{\wh}{\widehat}
\def\ie{\text{i.e.~}}
\newcommand\eg{{\text{\eg~}}}
\def\oC{{\overline C}}
\def\eps{{\varepsilon}}
\def\supp{{\rm supp}}
\def\hx{\hat{x}}
\def\hq{{\widehat{q}}}
\def\bx{{\boldsymbol{x}}}
\def\cF{{\mathcal F}}
\def\cC{{\mathcal C}}
\def\sS{{\mathscr S}}
\def\vG{\vec G}
\def\vJ{\vec J}
\def\bsig{{\boldsymbol {\sigma}}}
\def\vlam{{\vec \lambda}}
\def\scale{{\mathsf{scale}}}
\def\bx{{\boldsymbol{x}}}
\def\Par{{\sf P}}
\def\de{{\rm d}}
\def\<{\langle}
\def\>{\rangle}
\def\cN{{\cal N}}
\def\b0{{\boldsymbol{0}}}
\DeclareMathOperator*{\plim}{p-lim}
\def\cI{{\mathcal I}}
\def\cS{{\mathcal S}}
\def\cB{{\mathcal B}}
\renewcommand{\b}{\mathbf{b}}
\def\fr{\frac}
\def\lt{\left}
\def\rt{\right}
\def\la{\langle}
\def\ra{\rangle}
\def\eps{\varepsilon}
\def\bbE{{\mathbb{E}}}
\def\bbP{{\mathbb{P}}}
\def\bbR{{\mathbb{R}}}
\def\bbZ{{\mathbb{Z}}}
\def\cB{{\mathcal{B}}}
\def\cF{{\mathcal{F}}}
\def\cN{{\mathcal{N}}}
\def\cP{{\mathcal{P}}}
\def\BY{{\mathsf{BY}}}
\def\LY{{\mathsf{LY}}}
\def\GS{{\mathrm{GS}}}
\newcommand{\oGS}{\overline{\GS}}
\def\sym{\mathsf{sym}}
\def\oM{\overline M}
\title{Free Energy Universality of Spherical Spin Glasses}
\author[A1]{Mehtaab Sawhney}
\address{Department of Mathematics, Columbia University, New York, NY 10027}
\email{m.sawhney@columbia.edu}
\author[A2]{Mark Sellke}
\address{Department of Statistics, Harvard University, 1 Oxford St, Cambridge, MA, 02138, USA}
\email{msellke@fas.harvard.edu}
\begin{document}

\maketitle
\begin{abstract}
We prove the free energy and ground state energy of spherical spin glasses are universal under the minimal moment assumptions. Previously such universality was known only for Ising spin glasses and random symmetric matrices, the latter being a celebrated result of Bai--Yin.
Our methods extend to $\ell^q$ balls for $q>2$, thus resolving a conjecture of Chen--Sen, and to tensor PCA.
\end{abstract}

\section{Introduction}

We prove the free energy of spherical mean-field spin glasses is universal in the disorder.
This simultaneously extends the easier Ising case \cite{Tal02,CH06,Cha05} and the Bai--Yin law \cite{BY88}.

\subsection{Formal statement of results}
The model consists of a random Hamiltonian defined by symmetric tensors with independent \emph{disorder} entries $\{J_{i_1,\ldots,i_p}\}_{1\leq i_1\leq \dots \leq i_p\leq N}$.
The entries have mean zero and equal variance, except for repeated indices $i_j=i_k$.
Given a sequence $(i_1,\ldots,i_p)\subset[N]^{p}$, considered as a multi-set, we define $|\{i_1,\ldots,i_p\}|$ to be the frequency counts and $|\{i_1,\ldots,i_p\}|!$ to denote the product of their factorials. Thus $\{1,1,1,2,3,3\}\subseteq [4]^{6}$ has $|\{1,1,1,2,3,3\}| = (3,1,2,0)$ and $|\{1,1,1,2,3,3\}|! = 3! \cdot 1! \cdot 2! \cdot 0!$.

\begin{definition}\label{def:dis-bound}
Fix $P\in\bbZ_+$. For each $1\leq p\leq P$ we consider independent $J_{i_1,\ldots,i_p}$ for $i_1\le i_2\le \ldots \le i_p$, with
\begin{equation}\label{eq:disorder}
\begin{aligned}
\mb{E}[J_{i_1,\ldots,i_p}] &= 0,\\
\mb{E}[J_{i_1,\ldots,i_p}^2] &= \frac{|\{i_1,\ldots,i_p\}|!}{p!}.
\end{aligned}
\end{equation}
We extend these values by permutation symmetry:
\[J_{i_1,\ldots,i_p} = J_{i_{\pi(1)},\ldots,i_{\pi(p)}},\quad\forall \pi\in \mf{S}_p.\]
For each $p$, the $p$-th disorder tensor of the model is $\vec J^{(p)}\in \bbR^{N^p}$, with $(i_1,\dots,i_p)$ entry $J_{i_1,\dots,i_p}$. 
We let $\vec J=(\vec J^{(1)},\dots,\vec J^{(P)})$ and say it obeys $(C,\eps)$-moment bounds if additionally
\[\mb{E}[|J_{i_1,\ldots,i_p}|^{2p+\eps}]\leq C,\quad\forall 1\leq p\leq P.\]
\end{definition}

Next define
\[
\cS_N=\{\bsig\in\bbR^N~:~\|\bsig\|_2=\sqrt{N}\},
\quad\quad\quad
\cB_N=\{\bsig\in\bbR^N~:~\|\bsig\|_2\leq \sqrt{N}\}
\]
with $\mu_N$ uniform measure on the former. We can now define the spherical spin glass model. 

\begin{definition}
Fix $P\in \bbZ_+$, constants $\gamma_1,\dots,\gamma_P\geq 0$, and disorder $\vJ$ as in \cref{def:dis-bound}. The spherical spin glass Hamiltonian is the random function $H_N:\cS_N\to\bbR^N$ given by
\begin{equation}
\label{eq:HN-def}
H_N(\bsig;\vJ)=H_N(\bsig) :=
    \sum_{p=1}^P
    \fr{\gamma_p}{N^{(p-1)/2}} \sum_{\substack{1\le \ell\le p\\ 1\le i_{\ell}\le N}}J_{i_1,\ldots,i_p} \sigma_{i_1}\cdots \sigma_{i_p}
=
\sum_{p=1}^P
H_N^{(p)}(\bsig).
\end{equation}

The associated partition function, free energy and ground state energy are:
\begin{align*}
Z_{\beta}(\vJ)&:=\int_{\bsig\in\cS_N}\exp(\beta H_N(\bsig))
    \de\bsig,\quad\forall \beta \in [0,\infty);
    \\
F_{\beta}(\vJ)&:=\frac{1}{\beta}\log Z_{\beta}(\vJ);
    \\
\GS(\vJ)&:=F_{\infty}(\vJ)= \sup_{\bsig\in\cS_N}H_N(\bsig).
\end{align*}
\end{definition}

We will frequently make use of the fact that $H_N$ has rotationally invariant covariance
\begin{align}\label{eq:gaussian-covariance}
\mb{E}[H_N(\bsig)H_N(\bsig')]=N\xi(\la\bsig,\bsig'\ra/N);
\quad\quad\quad
\xi(t):=\sum_{p=1}^P \gamma_p^2 t^p.
\end{align}
In the case where $J_{i_1,\ldots,i_p}$ is distributed as $\mc{N}\big(0,\frac{|\{i_1,\ldots,i_p\}|!}{p!}\big)$, it is easily checked from \eqref{eq:HN-def} that $H_N:\cS_N\to\bbR$ is the unique centered Gaussian process obeying \eqref{eq:gaussian-covariance}.
In particular, $H_N$ is rotationally invariant in law as a function on $\cS_N$. We denote Gaussian disorder by $G_{i_1,\ldots,i_p}$ or $\vec{G}$.

In the case of Ising spins (with $\cS_N$ replaced by $\{\pm 1\}^N$), Talagrand \cite{talagrand2006parisi} remarkably confirmed Parisi's prediction \cite{parisi1979infinite} for the free energy under the assumption that $\gamma_{k}=0$ for all odd $k$.
Soon after, Talagrand proved a similar formula for the spherical model \cite{Tal06}, which was predicted by \cite{crisanti1992spherical}. 
Later Panchenko \cite{panchenko2013parisi} gave a different proof removing the requirement of even interactions and establishing ultrametricity; it was extended to the spherical case by \cite{Che13}.
See also \cite{guerra2003broken,auffinger2015parisi,CS17,huang2023constructive}.

The (Crisanti--Sommers version of the) Parisi formula for the limiting free energy is as follows.
Let $x:[0,1] \to [0,1]$ be a right-continuous non-decreasing function such that $x(\hq) = 1$ for some $\hq < 1$ (which may depend on $x$) and set $\hx(q) = \int_q^1 x(q)~\de q$.
For fixed $\xi$, the Parisi functional is defined as
\[
\cP(x;\xi)
= \fr12 \lt\{
    \xi'(0) \hx(0)
    + \int_0^1 \xi''(q)\hx(q) ~\de q
    + \int_0^\hq \fr{\de q}{\hx(q)}
    + \log (1-\hq)
\rt\}.
\]

\begin{theorem}[{Parisi Formula, \cite{Tal06,Che13,CS17}}]
\label{thm:parisi}
For any $\beta\in [0,\infty)$ and Gaussian disorder,
\[\plim_{N\to\infty}F_{\beta}(\vG)/N = \lim_{N\to\infty}\mb{E}[F_{\beta}(\vG)/N]=\Par(\xi;\beta)\equiv \inf_x \cP(x;\beta^2\xi)/\beta
\]
where $x,\Par(\cdot)$ are as above.
Further, $\plim\limits_{N\to\infty} \GS(\vG)/N=\lim\limits_{N\to\infty} \bbE[\GS(\vG)/N]=\lim\limits_{\beta\to\infty} \Par(\xi;\beta)$.
\end{theorem}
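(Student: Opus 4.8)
The plan is to prove matching asymptotic upper and lower bounds for $\mb{E}[F_\beta(\vG)]/N$, remove the expectation by concentration, and then pass to $\beta\to\infty$ to get the ground state; the case $\beta=0$ is the trivial identity $0=0$ (reading $\de\bsig$ as the normalized uniform measure $\mu_N$), so fix $\beta>0$. For Gaussian disorder $H_N(\bsig;\vG)$ is linear in $\vG$ with $\mb{E}[H_N(\bsig)^2]=N\xi(1)$ by \eqref{eq:gaussian-covariance}, constant over $\bsig\in\cS_N$; hence $\vG\mapsto F_\beta(\vG)$ and $\vG\mapsto\GS(\vG)$ are $\sqrt{N\xi(1)}$-Lipschitz in the standard Gaussian coordinates (their gradients are Gibbs/supremum averages of vectors of norm exactly $\sqrt{N\xi(1)}$), so the standard Gaussian concentration inequality gives $\mb{P}(|F_\beta(\vG)-\mb{E}F_\beta(\vG)|>t)\le 2\exp(-t^2/(2N\xi(1)))$ and likewise for $\GS$. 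Thus $F_\beta(\vG)/N$ and $\mb{E}[F_\beta(\vG)]/N$ share the same limit (in probability and in $L^1$), and it suffices to identify $\lim_N\mb{E}[F_\beta(\vG)]/N=\Par(\xi;\beta)$.

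\emph{Upper bound.} For each admissible Parisi path $x$, encoded by $0=q_0<\cdots<q_k=1$, $0=m_0<\cdots<m_k=1$, together with the constraint that the self-overlap equals $1$ (the spherical pinning), one runs Guerra's replica-symmetry-breaking interpolation. Couple $H_N$ on $\cS_N$ with a Ruelle probability cascade $(\alpha,\bsig_\alpha)$ and form $H_t(\bsig)=\sqrt{t}\,H_N(\bsig)+\sqrt{1-t}\,B_N(\bsig,\alpha)$, where $B_N$ is the hierarchical Gaussian field whose covariance is dictated by the cascade overlaps. With $\phi(t)=\tfrac1N\mb{E}\log\int_{\cS_N}\mb{E}_\alpha e^{\beta H_t}\,\de\bsig$, Gaussian integration by parts expresses $\phi'(t)$ as a combination of $-\xi''$ evaluated at overlaps against nonnegative weights, so $\phi'(t)\le 0$; the Crisanti--Sommers representation, i.e.\ the exact evaluation of the resulting spherical Gaussian integrals, is precisely what turns $\phi(0)$ into $\cP(x;\beta^2\xi)/\beta$. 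Hence $\phi(1)\le\phi(0)=\cP(x;\beta^2\xi)/\beta$, and infimizing over $x$ yields $\limsup_N\mb{E}[F_\beta(\vG)]/N\le\Par(\xi;\beta)$.

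\emph{Lower bound.} This is the heart of the matter and the step I expect to be the main obstacle. Via the Aizenman--Sims--Starr scheme one writes $\mb{E}[F_\beta(\vG^{(N)})]/N$ as a telescoping difference of $N$- and $(N-1)$-site free energies and bounds $\liminf_N\mb{E}[F_\beta]/N$ from below by a cavity functional of the limiting Gibbs measure on the sphere, described through its overlap array $R_{\ell\ell'}=\langle\bsig^\ell,\bsig^{\ell'}\rangle/N\in[-1,1]$ with $R_{\ell\ell}\equiv 1$. Adding a vanishing perturbation (a small sum of independent $p$-spin terms) enforces the Ghirlanda--Guerra identities, and Panchenko's ultrametricity theorem (in its spherical version) then forces the asymptotic overlap array to be ultrametric, so the limiting Gibbs measure is a Ruelle probability cascade directed by some $\zeta$ on $[0,1]$. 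Evaluating the cavity functional on this cascade reproduces $\cP(x_\zeta;\beta^2\xi)/\beta$: the bulk of $\zeta$ contributes $\tfrac12\xi'(0)\hx(0)+\tfrac12\int_0^1\xi''(q)\hx(q)\,\de q$, while the spherical pinning $R_{\ell\ell}\equiv 1$ is exactly what produces the $\tfrac12\int_0^{\hq}\de q/\hx(q)$ and $\tfrac12\log(1-\hq)$ terms of the Crisanti--Sommers functional. Hence $\liminf_N\mb{E}[F_\beta]/N\ge\inf_x\cP(x;\beta^2\xi)/\beta=\Par(\xi;\beta)$, matching the upper bound. (For the even model one may instead follow Talagrand's original two-dimensional Gaussian comparison argument in place of the ultrametricity input.)

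\emph{Ground state.} Since $\beta F_\beta(\vG)=\log Z_\beta(\vG)$ is convex in $\beta$ with value $0$ at $\beta=0$, the ratio $F_\beta(\vG)$ is non-decreasing in $\beta$ and $F_\beta(\vG)\uparrow\sup_{\bsig}H_N(\bsig)=\GS(\vG)$; moreover $0\le\GS(\vG)-F_\beta(\vG)\le N\delta+\beta^{-1}\bigl|\log\mu_N(\{\bsig:H_N(\bsig)>\GS(\vG)-N\delta\})\bigr|$ for every $\delta>0$, and the logarithmic volume term is $o(N)$ as $\delta\to 0$ uniformly in large $N$ by a standard near-ground-state concentration estimate. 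Likewise $\beta\mapsto\Par(\xi;\beta)$ is non-decreasing. Combining $\GS(\vG)\ge F_\beta(\vG)$ (which gives $\liminf_N\GS(\vG)/N\ge\Par(\xi;\beta)$ for every finite $\beta$, hence $\ge\lim_{\beta\to\infty}\Par(\xi;\beta)$) with the reverse bound coming from the displayed superlevel-set estimate and the already-established free-energy limit, one obtains $\GS(\vG)/N\to\lim_{\beta\to\infty}\Par(\xi;\beta)$, both in probability and in expectation, via the same Gaussian concentration inequality as above.
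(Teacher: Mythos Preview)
This theorem is not proved in the paper: it is quoted from \cite{Tal06,Che13,CS17} as background, and the paper's contribution is the universality results (\cref{thm:main,thm:bai-yin-var}) built on top of it. So there is no ``paper's own proof'' to compare against; what you have written is a (broadly accurate) outline of the arguments in those cited references.

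As such an outline, your concentration step and the Guerra upper bound are fine, and your description of the lower bound (Aizenman--Sims--Starr cavity, Ghirlanda--Guerra perturbation, Panchenko ultrametricity, identification of the cavity functional with the Crisanti--Sommers expression) matches the strategy of \cite{Che13}. One point is stated incorrectly, however: in the ground-state step you write that the superlevel-set log-volume is ``$o(N)$ as $\delta\to 0$ uniformly in large $N$''. That is not what is needed and not what is true; the log-volume of $\{\bsig:H_N(\bsig)>\GS-\delta N\}$ is typically $\Theta(N)$ for each fixed $\delta>0$. The correct statement is that it is $O(N)$ uniformly in $N$ for each fixed $\delta$ (this uses that $H_N$ is $O(\sqrt{N})$-Lipschitz on $\cS_N$ with high probability, cf.\ the paper's \cref{lem:positive-to-zero-temp}), so that $(\GS-F_\beta)/N\le \delta+O(1)/\beta$; one then sends $\beta\to\infty$ first and $\delta\to 0$ second. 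With that fix your sketch is a faithful summary of the proof in the cited works.
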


In the Ising case, \cite{Tal02,CH06,Cha05} have shown universality of the free energy by a Lindeberg-style interpolation argument. 
It suffices to have independent entries with mean zero, unit variance, and bounded $2+\eps$ moments (or just $2$ moments with IID entries). 
See also \cite{auffinger2016universality,chen2024universality} for universality of other aspects of the model, and \cite{jagannath2022existence,chen2023some} for closer studies of heavy tailed disorder.
The spherical case is more challenging due to the presence of \emph{localized} $\bsig\in\cS_N$ with very large coordinates. 
Indeed, even in the $P=2$ case of random matrices at zero temperature, a celebrated result of Bai--Yin \cite{BY88} shows that $4$ moments are necessary and sufficient for universality of the maximum eigenvalue. 
Our first main result establishes free energy universality under minimal moment conditions.

\begin{theorem}\label{thm:main}
Fix $\eps\in (0,1)$ and $C\ge 1$. Consider disorder $J_{i_1,\ldots,i_p}$ obeying $(C,\eps)$-moment bounds. There exists $c\geq \Omega_P(\eps)$ such that for large $N$, uniformly in $\beta\in [0,\infty]$:
\begin{align*}
    \bbE[|F_{\beta}(\vJ)-\bbE F_{\beta}(\vG)|]\leq N^{1-c}.
\end{align*}
\end{theorem}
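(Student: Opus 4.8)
The plan is to combine a truncation argument with a Lindeberg-style interpolation, using the Gaussian Parisi formula (\cref{thm:parisi}) only to know the Gaussian free energy concentrates. First I would reduce to a fixed finite $\beta$: since $F_\beta(\vJ)$ is nondecreasing in $\beta$ and $F_\beta(\vJ)/\beta \to \GS(\vJ)$, a union bound over a mesh $\beta\in\{0,\delta,2\delta,\ldots,B\}$ together with a tail argument for $\beta\ge B$ (where $F_\beta \approx \beta\,\GS$ up to an additive $O(\log N)$ from the volume of $\cS_N$) lets us transfer the estimate at finitely many temperatures to all $\beta\in[0,\infty]$; the endpoint $\beta=\infty$ then follows by monotone limits. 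So it suffices to prove $\bbE|F_\beta(\vJ)-\bbE F_\beta(\vG)|\le N^{1-c}$ for each fixed $\beta<\infty$.

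Next, truncation. Write $J_{i_1,\ldots,i_p} = \widetilde J_{i_1,\ldots,i_p} + \widehat J_{i_1,\ldots,i_p}$ where $\widetilde J = J\,\one\{|J|\le N^{\kappa}\}$ (recentered to mean zero) for a small $\kappa>0$, so that by the $(C,\eps)$-moment bound the truncated part $\widehat J$ satisfies $\bbE[\widehat J^2]\le C N^{-\kappa\eps}$ and the event that \emph{any} entry is truncated has probability $\le C N^{p}\cdot N^{-\kappa(2p+\eps)} = o(1)$ for $\kappa$ small. The contribution of $\widehat H_N = H_N(\,\cdot\,;\widehat{\vec J})$ to the free energy must be controlled: on $\cS_N$ one has $|\widehat H_N(\bsig)| \le \sum_p \frac{|\gamma_p|}{N^{(p-1)/2}}\sum_{\vec i}|\widehat J_{\vec i}|\,|\sigma_{i_1}\cdots\sigma_{i_p}|$, and since $\|\bsig\|_2=\sqrt N$ forces $\sum_i \sigma_i^2 = N$ we can bound $\sup_{\bsig\in\cS_N}|\widehat H_N(\bsig)|$ by a polynomial in the $\ell^2\to\ell^2$ (or injective-tensor) norms of the $\widehat{\vec J}^{(p)}$; these norms are $O(N^{1-\kappa\eps/4})$ with high probability and in expectation by a Bai--Yin/matrix-Chernoff argument applied to the small-variance truncated tensors (this is where the localized-$\bsig$ difficulty of the spherical model concentrates, exactly as in the $P=2$ Bai--Yin obstruction flagged after \cref{thm:main}). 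Hence $|F_\beta(\vJ)-F_\beta(\widetilde{\vec J})|\le \beta\sup|\widehat H_N| \le N^{1-c}$ off a null event, and we are reduced to \emph{bounded} disorder entries.

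With bounded entries one runs the classical interpolation: define $\vec J_t$ by replacing each $\widetilde J_{\vec i}$ with $\sqrt{t}\,\widetilde J_{\vec i} + \sqrt{1-t}\,G_{\vec i}$ and compute $\frac{\de}{\de t}\bbE F_\beta(\vec J_t)$, getting a sum over entries of $\bbE\big[(\widetilde J_{\vec i}/\sqrt t - G_{\vec i}/\sqrt{1-t})\,\partial_{J_{\vec i}} F_\beta\big]$; Gaussian integration by parts kills the third-moment-matched terms and leaves, via Taylor expansion to third order plus the fact that $\langle\sigma_{i_1}\cdots\sigma_{i_p}\rangle_{\mathrm{Gibbs}}$-type quantities have the right normalization, a total error of order $\sum_p N^p \cdot \big(N^{-(p-1)/2}\big)^3 \cdot N^{3\kappa} \cdot \max_{\bsig}|\sigma_{i_1}\cdots\sigma_{i_p}|^3$-weighted Gibbs averages. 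Here again $\|\bsig\|_2=\sqrt N$ is used to bound the Gibbs expectation of $|\sigma_{i_1}\cdots\sigma_{i_p}|^3$: one needs $\bbE\langle \prod|\sigma_{i_j}|^3\rangle$ summed over $\vec i$ to be $O(N^{(3p/2)+o(1)})$, which follows from $\sum_i\sigma_i^2=N$ by Hölder / a moment bound on the Gibbs measure's coordinate distribution. Combining, the interpolation error is $N^{1-c}$. Finally concentration of $F_\beta(\vG)$ around its mean (Gaussian concentration for Lipschitz functions of $\vG$, the Lipschitz constant being polynomial after the same truncation) and of $F_\beta(\vec J)$ around its mean (bounded differences / a martingale argument on the bounded truncated entries, again with polynomial coefficients) upgrades the bound on $|\bbE F_\beta(\vec J) - \bbE F_\beta(\vG)|$ to the $L^1$ statement.

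The main obstacle is the truncation step, specifically bounding $\sup_{\bsig\in\cS_N}|\widehat H_N(\bsig)|$ and the analogous high-moment Gibbs averages: for Ising spins $|\sigma_i|\equiv 1$ makes this trivial, but on $\cS_N$ the mass can localize on few coordinates, so one genuinely needs operator-norm control of random \emph{tensors} with heavy-tailed-then-truncated entries — this is the place where the Bai--Yin phenomenon enters and where the $2p+\eps$ moment threshold (rather than $2+\eps$) is forced. The matrix/tensor concentration for the truncated pieces, and getting the dependence of all Lipschitz constants on $N^\kappa$ to stay subpolynomial-enough that a small $\kappa$ suffices, is the technical heart; everything else is a routine adaptation of \cite{Tal02,CH06,Cha05}.
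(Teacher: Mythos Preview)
Your interpolation step has a genuine gap, and it is precisely the difficulty the paper is built to circumvent. You claim that the Lindeberg remainder is controlled once
\[
\sum_{i_1,\ldots,i_p}\bbE\big\langle |\sigma_{i_1}\cdots\sigma_{i_p}|^3\big\rangle
=\big\langle(\textstyle\sum_i|\sigma_i|^3)^p\big\rangle
\]
is $O(N^{3p/2+o(1)})$, and that this ``follows from $\sum_i\sigma_i^2=N$ by H\"older.'' But $N^{3p/2}$ is exactly the \emph{trivial} bound, attained whenever $\bsig$ is localized (e.g.\ $\bsig=\sqrt{N}\,e_1$), and plugging it into your own error estimate gives
\[
N^{-3(p-1)/2}\cdot N^{3p/2}\cdot N^{3\kappa}\;=\;N^{3/2+3\kappa},
\]
which is not $N^{1-c}$. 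To get a useful bound you would need $\sum_i|\sigma_i|^3=O(N^{1+o(1)})$, i.e.\ $\|\bsig\|_\infty=N^{o(1)}$, under the Gibbs measure of the \emph{interpolated, non-Gaussian} disorder. There is no a priori reason for this delocalization to hold; establishing it is essentially the content of the theorem. (If instead you use the honest bound $|F'''|\le 6\beta^2\|a\|_\infty^3$ from \cref{eq:dev-bound}, things are even worse: $\sup_{\bsig\in\cS_N}|\sigma_{i_1}\cdots\sigma_{i_p}|\asymp (N/p)^{p/2}$ for distinct indices, and the summed error blows up like $N^{p+3/2}$.) The sentence ``everything else is a routine adaptation of \cite{Tal02,CH06,Cha05}'' is therefore where the proof breaks: those references get $|\sigma_i|\equiv 1$ for free, and you do not.

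The paper's remedy is \emph{not} a sharper tensor-norm estimate on the truncated tail (your diagnosis of where the hard work lies), but a structural decomposition of $\cS_N$ itself. One covers $\cS_N$ by $e^{N^{1-c}}$ many ``delocalized subspheres'' $w+\cS_w$ (\cref{sec:decomp}): the center $w$ records the few large coordinates (there are at most $N/T^2$ of them), and on the orthogonal sphere $\cS_w$ every coordinate is bounded by $2T$ with $T=N^{0.01}$. On each such piece Lindeberg now succeeds entry by entry (\cref{lem:interpolation-on-sphere}) because the offending $|\sigma_i|$'s are either constant (coming from $w$) or uniformly $\le 2T$; the error is $\ll_P \beta^2 M N^{3/2}(T^3/\sqrt N)^{p-j}$, which is sublinear for $p-j\ge 2$. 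A separate argument using rotational invariance of the Gaussian law (\cref{lem:deloc}, \cref{lem:invar-upper-bound}) handles the $p-j\le 1$ terms, which your sketch also does not address and which genuinely cannot be done by moment matching (cf.\ \cref{rem:p-j=1}). Your truncation and concentration steps are broadly in the right spirit, but the claim ``$\kappa$ small'' makes the no-truncation probability $o(1)$ is also off: $N^p\cdot N^{-\kappa(2p+\eps)}=o(1)$ needs $\kappa>p/(2p+\eps)\approx 1/2$, not $\kappa$ small; the paper instead keeps $M=N^{1/(4P)}$ and handles the many moderate entries via the multi-scale argument of \cref{lem:bound}.
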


In Theorem~\ref{thm:main}, the disorder distributions may depend on $N$. 
However we also provide a variant \cref{thm:main} in the case that the $J_{i_1,\ldots,i_p}$ are IID (modulo repeated indices) and generated once and for all, rather than separately for each $N$.
In this case, directly generalizing \cite{BY88}, we show that $2p$ moments are necessary and sufficient for universality. Thus we in fact give a new proof of the Bai--Yin law avoiding any linear algebraic notions such as traces and eigenvalues.

\begin{theorem}\label{thm:bai-yin-var}
Fix probability distributions $\nu_1,\dots,\nu_P$ independent of $N$ with mean zero and variance $1$, such that each $\nu_p$ has finite $2p$-th moment. Let $J_{i_1,\ldots,i_p}\sim \Big(\frac{|\{i_1,\ldots,i_p\}|!}{p!}\Big)^{1/2}\nu_p$ be independent for $i_1\le \ldots \le i_p$ and $(i_1,\ldots,i_p)\in \mb{N}^p$, $J_{i_1,\ldots,i_p} = J_{i_{\pi(1)},\ldots,i_{\pi(p)}}$ for $\pi\in \mf{S}_p$, and which are fixed independently of $N$.
Then for any $\beta\in [0,\infty]$, we have the almost sure limit
\[
\lim_{N\to\infty}F_{\beta}(\vJ)/N = 
\Par(\xi,\beta).
\]
Conversely if any $\nu_p$ has infinite $2p$-th moment, then almost surely $\limsup\limits_{N\to\infty}\GS(\vJ)/N=\infty$.
\end{theorem}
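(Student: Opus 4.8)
\medskip

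The positive direction will be deduced from \cref{thm:main} by a truncation-plus-Borel--Cantelli argument. Given IID disorder $\nu_p$ with finite $2p$-th moment, I would set a truncation level $T_N=N^{\delta}$ for a small $\delta>0$ and write $J=J^{\le}+J^{>}$ where $J^{\le}_{i_1,\dots,i_p}=J_{i_1,\dots,i_p}\one\{|J_{i_1,\dots,i_p}|\le T_N\}$, then re-center to mean zero and re-scale to have the exact variance prescribed in \cref{def:dis-bound}. The truncated-recentered disorder has bounded $(2p+\eps)$-moments for a suitable $\eps=\eps(\delta,p)>0$ (using that $\nu_p$ has a $2p$-th moment and that truncation at $N^\delta$ costs at most a polynomial factor), so \cref{thm:main} applies to it and gives $\bbE|F_\beta(\vec J^{\le})-\bbE F_\beta(\vG)|\le N^{1-c}$; combined with Markov's inequality and Borel--Cantelli along $N$ this yields $F_\beta(\vec J^{\le})/N\to \Par(\xi,\beta)$ almost surely. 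It then remains to control the contribution of $J^{>}$. Since $\sum_N \Prob{J_{1,\dots,1}> N^{\delta}}<\infty$ when $\nu_p$ has $p/\delta$ moments (choose $\delta$ small enough that $p/\delta \le 2p$ is \emph{not} automatic---so one actually wants $\delta$ chosen so that the $2p$-th moment suffices, i.e.\ $\delta \ge 1/2$ is too big; instead use that finitely many entries among the relevant $O(N^p)$ exceed $N^\delta$), one shows that a.s.\ for large $N$ the excess tensor $\vec J^{>}$ is supported on $o(N)$ coordinates with controlled magnitudes, hence changes $H_N$ in sup-norm by $o(N)$ on $\cS_N$; interpolating between $\beta=0$ and $\beta=\infty$ this changes $F_\beta/N$ by $o(1)$ uniformly in $\beta$. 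Averaging the two pieces and recalling $\bbE F_\beta(\vG)/N\to \Par(\xi,\beta)$ from \cref{thm:parisi} completes the positive half. The main technical bookkeeping here is to make the truncation level $N^\delta$ simultaneously (i) small enough that only $o(N)$ entries in a window exceed it a.s., and (ii) chosen so the truncated moments are bounded; I expect this balance, together with uniformity over $\beta\in[0,\infty]$, to be the delicate point rather than a deep obstacle.

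\medskip

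For the converse, suppose $\nu_p$ has infinite $2p$-th moment for some fixed $p$. I would exhibit, almost surely for infinitely many $N$, a localized spin configuration on which $H_N^{(p)}$ is enormous while the other components $H_N^{(p')}$ are negligible. Concretely, infinite $2p$-th moment means $\sum_{m}\Prob{|\nu_p|> m^{1/(2p)}}=\infty$, so by the second Borel--Cantelli lemma (the entries being independent) there are infinitely many indices $i$ with $|J_{i,i,\dots,i}|$ exceeding $\omega(N)\cdot N^{1/(2p)}$ for any slowly growing $\omega$; pick such an $i=i(N)\le N$. Taking $\bsig=\sqrt{N}\,e_{i}$ (the localized configuration concentrated on a single coordinate, rescaled onto $\cS_N$) gives the diagonal term of $H_N^{(p)}$ a contribution of order $\frac{\gamma_p}{N^{(p-1)/2}}\cdot |J_{i,\dots,i}|\cdot (\sqrt N)^{p} = \gamma_p |J_{i,\dots,i}| \cdot N^{1/2} = \omega(N)\cdot N^{(p+1)/(2p)} \cdot \gamma_p$, which divided by $N$ still tends to $\infty$ when $p\ge 1$ (indeed $(p+1)/(2p)-1 = (1-p)/(2p)\le 0$ for $p\ge1$, so I must be more careful—one instead chooses $i$ with $|J_{i,\dots,i}|\ge N^{(p-1)/2}\cdot \omega(N)$, which happens infinitely often precisely because $\nu_p\notin L^{2p}$ forces $\sum_N\Prob{|\nu_p|\ge N^{(p-1)/(2p)}\omega(N)^{1/p}}$ to still diverge when $\omega$ grows slowly and one reindexes over the $N$ available diagonal entries). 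Then the single-coordinate evaluation gives $H_N^{(p)}(\bsig)/N \ge \gamma_p \omega(N)\to\infty$. The remaining work is to check that the off-diagonal terms of $H_N^{(p)}$ at this same $\bsig$ vanish (they do: $\bsig$ is supported on one coordinate, so only the all-$i$ term survives) and that the lower-degree parts $H_N^{(p')}(\bsig)$ for $p'\ne p$ do not conspire to cancel it; the latter follows because at a $1$-sparse $\bsig$ each $H_N^{(p')}(\bsig)=\frac{\gamma_{p'}}{N^{(p'-1)/2}}J^{(p')}_{i,\dots,i}N^{p'/2}$ involves a \emph{single} independent entry, and a.s.\ these are $o$ of the chosen large entry after passing to a further subsequence of $N$.

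\medskip

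The key obstacle I anticipate is the converse direction's quantitative matching: pinning down the exact exponent at which "$\nu_p\notin L^{2p}$'' translates into "a diagonal entry exceeds the threshold needed to make $\GS/N$ blow up, infinitely often.'' This is exactly the Bai--Yin phenomenon ($2p$ moments being the sharp threshold), and getting the Borel--Cantelli divergence to line up with the $N^{(p-1)/2}$ normalization in \eqref{eq:HN-def} requires care with how many diagonal entries are "available'' for a given $N$ (namely $N$ of them, indexed by $i\in[N]$) versus the tail decay rate of $\nu_p$. Once the threshold computation is correct, the rest—evaluating $H_N$ at $1$-sparse configurations and invoking \cref{thm:main} plus \cref{thm:parisi} for the positive direction—is routine. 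I would also remark that this gives the promised linear-algebra-free proof of Bai--Yin: the $P=2$, $\beta=\infty$ case is precisely the statement that $\lambda_{\max}$ of a Wigner matrix is $O(\sqrt N)$ iff the entries have $4$ moments.
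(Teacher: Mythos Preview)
Your converse direction has a genuine error. Using only diagonal entries $J_{i,\dots,i}$ and $1$-sparse $\bsig=\sqrt{N}\,e_i$ cannot work: the hypothesis includes $\Var(\nu_p)=1$, so $\nu_p\in L^2$, whence $\sum_i\bbP[|J_{i,\dots,i}|>\eps\sqrt{i}]<\infty$ for every $\eps>0$. Borel--Cantelli then gives $\max_{i\le N}|J_{i,\dots,i}|=o(\sqrt N)$ almost surely, and since $H_N^{(p)}(\sqrt N\,e_i)/N=\gamma_p J_{i,\dots,i}/\sqrt N$, your $1$-sparse evaluation is $o(1)$, not unbounded. Your attempted fix (``choose $i$ with $|J_{i,\dots,i}|\ge N^{(p-1)/2}\omega(N)$'') requires diagonal entries far larger than $\sqrt N$, which for $p\ge 2$ almost surely never occurs. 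The mismatch is that there are only $N$ diagonal entries, whereas the condition $\nu_p\notin L^{2p}$ is calibrated to the $\Theta(N^p)$ off-diagonal entries. The paper's argument instead takes $j\in\{2,\dots,p\}$ minimal with $\nu_p\notin L^{2j}$ (necessarily $j\ge 2$ since $\nu_p\in L^2$), uses the $\Theta(N^j)$ entries with exactly $j$ distinct indices, and evaluates at $\bsig$ supported on those $j$ coordinates; the count $\Theta(N^j)$ is exactly what makes Borel--Cantelli line up with the threshold $|J|\gtrsim\sqrt N$ needed to force $\GS/N\to\infty$. Minimality of $j$ is then used to show competing entries on the same support are negligible.

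Your positive direction is in the right spirit but skips the two real difficulties. First, the rate $N^{1-c}$ from \cref{thm:main} combined with Markov only gives $\bbP[|F_\beta(\vJ)-\bbE F_\beta(\vG)|>N^{1-c/2}]\le N^{-c/2}$, which is not summable in $N$; one needs the exponential tails coming from Talagrand's inequality on the \emph{bounded} truncated disorder, not the expectation bound in the theorem statement. Second, and more seriously, your assertion that $\vJ^{>}$ ``changes $H_N$ in sup-norm by $o(N)$'' is exactly the hard step: a single entry of size $\alpha\sqrt N$ (evaluated at a localized $\bsig$) already contributes $\Theta(\alpha N)$ to $\oGS$, and with only $2p$ moments there are order-one many entries of size $\Theta(\sqrt N)$. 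Showing $\oGS(\vJ^{>})=o(N)$ almost surely requires a genuine multi-scale analysis of the intermediate entries. The paper accordingly does not black-box \cref{thm:main} but re-runs the truncation and interpolation with parameters $\delta_N^{\BY}\downarrow 0$ and $M_{N,1}^{\BY}\uparrow\infty$ obtained by a diagonalization argument (\cref{lem:diagonal}), after which the same machinery (Sections~\ref{sec:decomp}--\ref{sec:bound-dis}) and Borel--Cantelli on the exponential concentration give the a.s.\ limit.
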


Similar arguments also yield the following convergence in probability variant, paralleling \cite{lee2014necessary} in the matrix case (which showed much more detailed Tracy-Widom behavior at the edge under such a condition).
The proof is essentially identical to \cref{thm:bai-yin-var} and explained in \cref{rem:lee-yin}.

\begin{proposition}
\label{prop:lee-yin}
Fix probability distributions $\nu_1,\dots,\nu_P$ independent of $N$ with mean zero and variance $1$, such that for some $0<C<\infty$,
\begin{equation}
\label{eq:lee-yin}
\lim_{s\to\infty} s^{2p}\,\bbP^{x\sim \nu_p}[|x|\geq s]=0,\quad\forall 1\leq p\leq P.
\end{equation}
Then in the setting of \cref{thm:bai-yin-var} we have for all $\beta\in [0,\infty]$ the convergence in probability 
\[
\plim_{N\to\infty} F_{\beta}(\vJ)/N = 
\Par(\xi,\beta).
\]
Conversely if \eqref{eq:lee-yin} is false for some $p$, then $\limsup_{N\to\infty}\bbP[\GS(\vJ)\geq AN]>0$ for any $A>0$.
\end{proposition}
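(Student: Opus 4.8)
The plan is to follow the proof of \cref{thm:bai-yin-var} essentially verbatim, replacing almost-sure statements by convergence in probability. The positive direction reduces to a truncation argument: fix a large constant $K$ and write $J_{i_1,\ldots,i_p} = J^{\le K}_{i_1,\ldots,i_p} + J^{>K}_{i_1,\ldots,i_p}$, the truncated and tail parts. Under \eqref{eq:lee-yin} the truncated entries $J^{\le K}_{i_1,\ldots,i_p}$ (after recentering) have mean $o(1)$ and variance $\frac{|\{i_1,\ldots,i_p\}|!}{p!}(1-o_K(1))$, with bounded moments of all orders, so \cref{thm:main} applies to them and gives $F_\beta(\vJ^{\le K})/N \to \Par(\xi,\beta)$ in probability (after rescaling the variance by $1+o_K(1)$, which perturbs the Parisi value by $o_K(1)$ using continuity of $\xi\mapsto\Par(\xi,\beta)$). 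For the tail part, the key point is that \eqref{eq:lee-yin} forces $\E[|J^{>K}_{i_1,\ldots,i_p}|^{2p}] = o_K(1)$ uniformly in the entry, which should feed into whatever norm-type bound on $H_N^{(p)}$ the paper proves for \cref{thm:bai-yin-var}; the relevant quantity $\sup_{\bsig\in\cS_N} |H_N^{(p)}(\bsig;\vec J^{>K})|/N$ is controlled in probability by a quantity tending to $0$ as $K\to\infty$. Then a triangle inequality in $\beta H_N$ (using $|F_\beta(\vJ) - F_\beta(\vJ')| \le \sup_{\bsig}|H_N(\bsig)-H_N'(\bsig)|$, valid for all $\beta\in[0,\infty]$ including at the level of ground states) completes the upper direction: for every $\eta>0$, pick $K$ large, and then $N$ large, to get $|F_\beta(\vJ)/N - \Par(\xi,\beta)| \le \eta$ with probability $\to 1$.

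For the converse, suppose \eqref{eq:lee-yin} fails for some $p$, i.e.\ $\limsup_{s\to\infty} s^{2p}\,\bbP^{x\sim\nu_p}[|x|\ge s] = \delta > 0$. The goal is to show $\limsup_N \bbP[\GS(\vJ)\ge AN] > 0$ for every fixed $A$. The mechanism is the same as in Bai--Yin: a single atypically large disorder entry $J_{i,i,\ldots,i}$ (or more generally an entry with few distinct indices) produces a localized spike $\bsig = \sqrt{N}\,e_i$ contributing order $N^{(p+1)/2}/N^{(p-1)/2} = N$ to $H_N$ when the entry is of size $\asymp N^{(p-1)/2}\cdot$(const). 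Concretely, among the $\asymp N$ diagonal entries $J_{i,\ldots,i}$, each exceeds $t = A'N^{(p-1)/2}$ with probability $\gtrsim \delta/(A'N^{(p-1)/2})^{2p} \cdot$(correction), wait---one must be careful here since for $p \ge 2$ the diagonal scaling $N^{(p-1)/2}$ grows, so a single diagonal entry of the needed size has probability $\asymp N^{-p(p-1)}$, which is too small. Instead one uses entries with exactly two distinct indices, or directly uses that the relevant event is $\{\max_i |J_{i,\ldots,i}| \ge c N^{(p-1)/2}\}$ whose probability along a subsequence is bounded below because the failure of \eqref{eq:lee-yin} means the tail is not $o(s^{-2p})$; evaluating $H_N$ at $\bsig = N^{1/(2p)}\cdot(\text{something})$ or at a suitable sparse unit vector gives the linear-in-$N$ lower bound. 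I expect the paper handles this via a Borel--Cantelli-type lower bound showing the spike event has probability bounded away from $0$ along a subsequence of $N$, which is exactly what "$\limsup \bbP > 0$" (rather than "a.s.") asks for, and is why the proof is slightly weaker than \cref{thm:bai-yin-var}.

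The main obstacle is making the truncation in the positive direction quantitative enough: one must show the contribution of the tail part $\vec J^{>K}$ to $\sup_{\bsig\in\cS_N} H_N(\bsig)/N$ is $o_K(1)$ \emph{in probability uniformly in $N$}, which under the one-sided condition \eqref{eq:lee-yin} is delicate precisely because \eqref{eq:lee-yin} gives control only on the $2p$-th moment of the truncated-from-below variable and nothing on higher moments. This is where one leverages the structure already developed for \cref{thm:bai-yin-var}: the bound on the localized contribution only ever needs $2p$-th moments, so it transfers directly, with the $o_K(1)$ coming from $\E[|J^{>K}_{i_1,\ldots,i_p}|^{2p}]\to 0$ as $K\to\infty$. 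Everything else---the perturbation of the Parisi value, the triangle inequality across $\beta$, the subsequence argument for the converse---is routine given the tools in the excerpt. I would simply remark that replacing "almost surely" by "in probability" throughout the proof of \cref{thm:bai-yin-var}, and replacing the fourth-moment tail criterion by \eqref{eq:lee-yin}, yields the claim, as indicated in \cref{rem:lee-yin}.
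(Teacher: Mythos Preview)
Your fixed-$K$ truncation approach has a genuine gap. You claim that \eqref{eq:lee-yin} forces $\bbE[|J^{>K}_{i_1,\ldots,i_p}|^{2p}] = o_K(1)$, but this is false: condition \eqref{eq:lee-yin} is strictly weaker than a finite $2p$-th moment (e.g.\ a density $\asymp x^{-2p-1}(\log x)^{-2}$ for large $x$ satisfies \eqref{eq:lee-yin} while $\bbE[|x|^{2p}]=\infty$), so $\bbE[|J^{>K}|^{2p}]$ may be $+\infty$ for every $K$. Nor can you rescue this via the variance: $J^{>K}$ does have variance $\sigma_K^2 = o_K(1)$, but after normalizing by $\sigma_K$ the tail condition \eqref{eq:lee-yin} holds for $J^{>K}/\sigma_K$ only with an implied constant of order $\sigma_K^{-2p}\to\infty$, so you cannot feed it back into any existing bound to get $\oGS(\vJ^{>K})/N = o_K(1)$. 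Controlling $\oGS(\vJ^{>K})$ under your hypotheses is essentially the original problem again.

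The paper avoids $2p$-th moments entirely at the top scale. Condition \eqref{eq:lee-yin} says precisely that $N^p\,\bbP[|J_{i_1,\ldots,i_p}|\ge \delta\sqrt{N}]\to 0$ for each fixed $\delta>0$; diagonalizing yields $\delta_N^{\LY}\downarrow 0$ with $\bbP\big[\max_{i_1,\ldots,i_p}|J_{i_1,\ldots,i_p}|\ge \delta_N^{\LY}\sqrt{N}\big]\to 0$, so the tail part $\vJ^{\msf{tail}}$ is literally zero with probability $1-o(1)$. The intermediate-scale lemmas (\cref{lem:large-rem,lem:bound}) and the estimate \eqref{eq:Markov-2} then only require $(2p-\eps')$-th moments for arbitrarily small $\eps'>0$, which \eqref{eq:lee-yin} \emph{does} imply (integrate the tail bound), and the rest of the proof of \cref{thm:bai-yin-var} runs with ``almost surely'' downgraded to ``in probability''. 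Your final sentence captures this correctly; the preceding fixed-$K$ strategy does not.
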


Subsequent to the main proofs, \cref{sec:extensions} provides extensions to tensor PCA, multi-species models, and $\ell^q$ balls for $q>2$. The last resolves \cite[Open Problem 5]{chen2023ell}.

Many other aspects of spin glasses are of interest besides the free energy. We plan to extend the methods here to prove universality for certain other behaviors in a future paper.

\subsection{Proof Outline}

As mentioned previously, the main difficulty compared to the Ising case is the presence of large ``localized'' coordinates, which prevent a Lindeberg-style interpolation from directly succeeding. 
In a preliminary step, we carefully truncate the disorder at a range of scales to reduce to the case that $\|\vJ^{(p)}\|_{\infty}$ is almost surely bounded by a slowly growing function of $N$. We remark here that the moment conditions in \cref{thm:main,thm:bai-yin-var} arise naturally and transparently from truncating the very largest entries. The precise moment cutoff arises precisely when these very large entries give rise to ``localized'' vectors with large value; we note that such an interpretation does not appear to fall out naturally from the approach of \cite{BY88} to the $P=2$ case.

Next we crucially observe that $\cS_N$ can be covered by a subexponential $e^{N^{1-c}}$ number of ``delocalized subspheres'', by essentially exhausting all possible localized coordinate patterns of points in $\cS_N$.
Each subsphere is centered at some $w\in\cB_N$ with all coordinates either large or zero. 
Further, all large coordinates of $w$ are essentially constant on its ``subsphere'', and other coordinates of $w$ are uniformly bounded. 
Having previously truncated the disorder, standard concentration results for Lipschitz functions of Gaussian and for convex Lipschitz functions of bounded independent variables apply to the free energy on each subsphere.
As a result, it suffices to show free energy universality on each subsphere individually.
The main point is that by design, Lindeberg-style interpolation applies on each subsphere, as all coordinates are either essentially constant or uniformly bounded.
In other words, the restriction to each delocalized subsphere behaves like an Ising spin glass.

Some additional arguments are required in the last stage. The main reason is that one cannot apply Lindeberg interpolation to the external field (linear term) of the Hamiltonian. Indeed in the Ising case, the free energy depends on the exact distribution of the external field and cannot be reduced to a finite number of moments.
This issue is further exacerbated on delocalized subspheres because the non-zero centering yields additional external field terms. (In particular these issues arise even when the original model has no external field, i.e. $\gamma_1=0$.) 
A recurring tool we use to address this difficulty is the rotational symmetry of both the sphere and Gaussian disorder, which lets us argue that the external field essentially only enters via its $L^2$ norm.

\subsection{Notation}
For $x\in \mb{R}^N$ and $p<\infty$, we let $\snorm{x}_{p}^p = \sum_{i=1}^{N}|x_i|^p$. Furthermore let $\snorm{x}_{\infty} = \sup_{1\le i\le n}|x_i|$. 
We write $f(N)\ll g(N)$ or $f(N)\leq O(g(N))$ or $g(N))\geq \Omega(f(N))$ if $f(N)\leq Cg(N)$ for $N$ sufficiently large, where $C$ may depend on $\max(\gamma_1,\dots,\gamma_P)$ but is otherwise an absolute constant.
For $1\leq p\leq P$ we will write $f(N)\ll_p g(N)$ or $f(N)\leq O_p(g(N))$ to indicate that $f(N)\leq C'g(N)$ for $N$ sufficiently large, where $C'$ may now depend on $p$.

For a closed subset $A\subseteq\bbR^N$, we write $\cP(A)$ for the space of Borel probability measures supported in $A$.
For $\bsig\in\bbR^N$, let $\supp(\bsig)=\{i\in [N]~:~\sigma_i\neq 0\}$.
We use $\mbm{1}$ to denote indicator functions for events.

We say a $p$-tensor $\vJ^{(p)}$ is symmetric if $J_{i_1,\dots,i_p}=J_{i_{\pi(1)},\dots,i_{\pi(p)}}$ for any permutation $\pi\in \mf{S}_p$ on $\{1,2,\dots,p\}$. Given vectors $x^1,\ldots,x^p\in \mb{R}^N$ and a $p$-tensor $\vJ^{(p)}$, we define 
\[
\sang{\vJ^{(p)}, \otimes_{j=1}^{p} x^j} 
= 
\sum_{ i_1,\dots,i_{p}=1}^N\vJ^{(p)}_{i_{1},\ldots,i_{p}}x^1_{i_1}\cdots x^p_{i_{p}}.
\]
We will write $\vJ^{(p)}\in \bbR^{N^p}$ for the $p$-tensor disorder and $\vJ$ for the entire disorder.

It will be important within the proof to consider measures on proper subsets of the sphere and their associated free energies. For general probability measures $\nu\in \cP(\cS_N)$, we define:
\begin{equation}\label{eq:free-energy-general-measure}
\begin{aligned}
Z_{\beta}(\vec J;\nu) &= \int_{\cS_N} \exp(\beta H_N(\bsig))\de\nu(\bsig),\\
F_{\beta}(\vec J;\nu) &= \frac{1}{\beta}\log\int_{\cS_N}\exp(\beta H_N(\bsig))\de\nu(\bsig).
\end{aligned}
\end{equation}
Finally to bound various error terms, we will occasionally use an absolute ground--state energy
\[
\oGS(\vJ):=\sup_{\bsig\in\cS_N}|H_N(\bsig)|=\max(\GS(\vJ),\GS(-\vJ)).
\]

\section{Truncation to bounded disorder}\label{sec:trunc}
In this section, we reduce our analysis to the case of almost uniformly bounded disorder. Throughout the proof we let $M = M_N$, $M_1 = M_{N,1}$, $\eta = \eta_{N}$, and $\delta = \delta_{N}$ be truncation parameters. We will assume throughout that 
\[
N^{-1/4}\le \eta\le \delta\le 1\le M,M_1\le N^{1/4}.
\]
We set $J\ge 1$ so $M\cdot 2^{J}=\eta \sqrt{N}$ (slightly adjusting constants so that $J\in\bbZ_+$). For $p\ge 2$, define 
\begin{align*}
J_{i_1,\dots,i_p}^{\msf{small}} &= J_{i_1,\dots,i_p}\cdot \mbm{1}_{|J_{i_1,\dots,i_p}|\leq M/2}
-\mb{E}[J_{i_1,\dots,i_p}\cdot \mbm{1}_{|J_{i_1,\dots,i_p}|\leq M/2}] \in [-M,M];\\
J_{i_1,\dots,i_p}^{\msf{scale}(j)}&=
J_{i_1,\dots,i_p}\cdot \mbm{1}_{|J_{i_1,\dots,i_p}|\in [M2^{j-1},M2^j)}
-\mb{E}[J_{i_1,\dots,i_p}\cdot \mbm{1}_{|J_{i_1,\dots,i_p}|\in [M2^{j-1},M2^j)},
\qquad\forall~0\leq j\leq J;\\
J_{i_1,\dots,i_p}^{\msf{large}}&= J_{i_1,\dots,i_p}\cdot \mbm{1}_{|J_{i_1,\dots,i_p}|\in[\eta\sqrt{N},\delta\sqrt{N}]}-\mb{E}[J_{i_1,\dots,i_p}\cdot \mbm{1}_{|J_{i_1,\dots,i_p}|\in [\eta\sqrt{N},\delta\sqrt{N}]}];\\
J_{i_1,\dots,i_p}^{\msf{tail}}&=
J_{i_1,\dots,i_p}\cdot \mbm{1}_{|J_{i_1,\dots,i_p}|> \delta\sqrt{N} }-\mb{E}[J_{i_1,\dots,i_p}\cdot \mbm{1}_{|J_{i_1,\dots,i_p}|> \delta\sqrt{N}}].
\end{align*}
For minor technical reasons, the truncation is slightly different when $p = 1$. In this case we define:
\begin{align*}
J_{i}^{\msf{small}} &= J_{i}\cdot \mbm{1}_{|J_{i}|\leq M_1/2}
-\mb{E}[J_{i}\cdot \mbm{1}_{|J_{i}|\leq M_1/2}] \in [-M_1,M_1];\\
J_{i}^{\msf{scale}(j)}&= 0,\qquad\forall~0\leq j\leq J; \\
J_{i}^{\msf{large}} &= 0\\
J_{i}^{\msf{tail}}&=
J_{i}\cdot \mbm{1}_{|J_{i}|>M_1/2}
-
\mb{E}[J_{i}\cdot \mbm{1}_{|J_{i}|>M_1/2}].
\end{align*}
We will handle the settings of Theorems~\ref{thm:main} and \ref{thm:bai-yin-var} in parallel below. 
In both cases, we will take $\eta_N=N^{-\frac{1}{4P}}$ and $M_N=N^{\frac{1}{4P}}$. 
In the former setting, the remaining values are also explicit:
\begin{equation}
\label{eq:delta-M1-eps}
\delta_N^{(\eps)}=N^{-\frac{\eps}{4P}},
\quad\quad
M_{N,1}^{(\eps)}=N^{\frac{1}{4P}}.
\end{equation}
However in the latter setting, the sequences $\delta_N^{\BY}\downarrow 0$ and $M_{N,1}^{\BY}\uparrow \infty$ will tend to their limits sufficiently slowly depending on the entry distributions $\nu_1,\dots,\nu_P$.
Thus we will use the superscripts $(\eps)$ and $\BY$ to distinguish the latter parameters.

The following standard estimates are ultimately where the sharp moment criteria enter.
\begin{lemma}\label{lem:Markov}
Fix $\alpha,\eps\in (0,1)$, $C\ge 1$. Let $\vec{J}$ obey $(C,\eps)$-moment bounds. Then 
\begin{equation}
\label{eq:Markov-1}
\mb{P}\bigg[
\sup_{1\le p\le P}
\sup_{1\leq i_1,\ldots,i_p\leq N}
|J_{i_1,\ldots,i_p}|
\ge 
\alpha \sqrt{N}
\bigg]
\ll_P 
C\alpha^{-2P- \eps}N^{-\eps}.
\end{equation}
Furthermore if $\mb{E}[|J_{i_1,\ldots,i_p}|^{2p}]\le C$ and $0\le q\le 2$, then
\begin{equation}
\label{eq:Markov-2}
\mb{E}[|J_{i_1,\dots,i_p}|^q\cdot \mbm{1}_{|J_{i_1,\dots,i_p}|>\alpha\sqrt{N}}]
\ll_p 
CN^{-p+\frac{q}{2}}\alpha^{-2p+2}.
\end{equation}
Finally if $\mb{E}[|J_{i}|^{2+\eps}]\le C$ then 
\begin{equation}
\label{eq:Markov-3}
\mb{E}[|J_{i}|^2\cdot \mbm{1}_{|J_{i}|>M_1/2}]
\ll_p 
C\eps^{-1}M_1^{-\eps}.
\end{equation}
\end{lemma}
\begin{proof}
We use Markov's inequality, and the tail-sum formula in the latter two cases:
\begin{align*}
\mb{P}\bigg[
\sup_{1\le p\le P}\sup_{i_1,\ldots,i_p}|J_{i_1,\ldots,i_p}|\ge t
\bigg]
&\ll_P \sum_{p = 1}^{P}Ct^{-(2p+\eps)} N^p
\ll_P 
C\alpha^{-2P- \eps}N^{-\eps};
\\
\mb{E}\big[
|J_{i_1,\dots,i_p}|^q\cdot \mbm{1}_{|J_{i_1,\dots,i_p}|> \alpha\sqrt{N}}
\big]
&\ll_p 
\int_{\alpha\sqrt{N}}^{\infty}
\frac{Cq}{t^{2p-q+1}}~\de t
\ll_p 
CN^{-p+\frac{q}{2}}\alpha^{-2p+2}
\\
\mb{E}[
|J_{i}|^2\cdot \mbm{1}_{|J_{i}|>M_1/2}
]
&\ll
\int_{M_1/2}^{\infty}\frac{C}{t^{1+\eps}}~\de t
\ll C\eps^{-1}M_1^{-\eps}.
\qedhere
\end{align*}
\end{proof}

\begin{lemma}\label{lem:diagonal}
Let $\vec{J}$ be as in \cref{thm:bai-yin-var}. There exists a decreasing sequence $\delta_N^{\BY}\downarrow 0$ and increasing sequence $M_{N,1}^{\BY}\uparrow \infty$ with $\delta_N^{\BY}\geq 1/\log N$ and $M_{N,1}^{\BY}\le \log N$ such almost surely:
\begin{align}
\label{eq:the-sum-which-is-finite}    
\sum_{N\ge 1}\mbm{1}\big[\sup_{1\le p\le P}\sup_{i_1,\ldots,i_p\in [N]}|J_{i_1,\ldots,i_p}|\ge \delta_N^{\BY} \sqrt{N}\big]&<\infty;
\\
\label{eq:external-field-BY-truncation} 
\sum_{N\ge 1}\mbm{1}\bigg[\sum_{i=1}^{N}|J_{i}|^2\cdot \mbm{1}[|J_{i}|\ge M_{N,1}^{\BY}/2]\ge \delta_N N\bigg]&<\infty.
\end{align}
\end{lemma}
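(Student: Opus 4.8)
The plan is to prove \cref{lem:diagonal} by a Borel--Cantelli argument, choosing the sequences $\delta_N^{\BY}\downarrow 0$ and $M_{N,1}^{\BY}\uparrow\infty$ slowly enough that both series of probabilities converge. The key point is that the hypotheses of \cref{thm:bai-yin-var} — namely that each $\nu_p$ has finite $2p$-th moment, and (via the $p=1$ case, where finite $2$nd moment is given) — provide exactly enough decay to make these sums finite once the thresholds are relaxed at the right rate.

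For the first series \eqref{eq:the-sum-which-is-finite}, I would fix a target summable sequence, say $a_N=N^{-2}$, and use \eqref{eq:Markov-1} from \cref{lem:Markov} in the form $\mb{P}[\sup_{p,\vec i}|J_{i_1,\ldots,i_p}|\ge \alpha\sqrt N]\ll_P C_\nu \alpha^{-2P-\eps}N^{-\eps}$; more precisely, since here we only have finite $2p$-th (not $2p+\eps$) moments, I would instead invoke \eqref{eq:Markov-2} with $q=0$: for each fixed $p$, $\mb{P}[|J_{i_1,\ldots,i_p}|>\alpha\sqrt N]\le \mb{E}[|J_{i_1,\ldots,i_p}|^{2p}]\,(\alpha\sqrt N)^{-2p}$, so a union bound over the $\ll N^p$ indices gives $\mb{P}[\sup_{\vec i}|J_{i_1,\ldots,i_p}|>\alpha\sqrt N]\ll_p \alpha^{-2p}$. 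This is $o(1)$ as $\alpha\to\infty$ for each fixed $p$ but does \emph{not} decay in $N$ at fixed $\alpha$ — so the trick is to let $\alpha=\delta_N^{\BY}$ depend on $N$. Concretely, define $\delta_N^{\BY}$ so that for each $p$ the bound $\mb{E}[|J_{i_1,\ldots,i_p}|^{2p}\mbm 1_{|J_{i_1,\ldots,i_p}|>\delta_N^{\BY}\sqrt N/2}]\to 0$, which is possible by dominated convergence as $\delta_N^{\BY}\sqrt N\to\infty$; then by Markov $\mb{P}[\sup_{\vec i}|J_{i_1,\ldots,i_p}|>\delta_N^{\BY}\sqrt N]\le N^p (\delta_N^{\BY}\sqrt N)^{-2p}\,\mb{E}[|J_{i_1,\ldots,i_p}|^{2p}\mbm 1_{|\cdot|>\delta_N^{\BY}\sqrt N}] \cdot (\text{const}) = (\delta_N^{\BY})^{-2p} o(1)$, and choosing $\delta_N^{\BY}$ to decay slowly enough (e.g. so that $(\delta_N^{\BY})^{-2P}$ times the $o(1)$ quantity is $\le N^{-2}$, while still $\delta_N^{\BY}\ge 1/\log N$) makes the sum over $N$ finite. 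Borel--Cantelli then gives \eqref{eq:the-sum-which-is-finite} almost surely.

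For the second series \eqref{eq:external-field-BY-truncation}, the quantity $S_N:=\sum_{i=1}^N |J_i|^2\mbm 1[|J_i|\ge M_{N,1}^{\BY}/2]$ has expectation $N\cdot \mb{E}[|J_1|^2\mbm 1_{|J_1|\ge M_{N,1}^{\BY}/2}] =: N\kappa_N$, where $\kappa_N\to 0$ as $M_{N,1}^{\BY}\to\infty$ by dominated convergence (using $\mb{E}|J_1|^2=1<\infty$). So $\mb{P}[S_N\ge \delta_N N]\le \kappa_N/\delta_N$ by Markov. The plan is to pick $M_{N,1}^{\BY}\uparrow\infty$ slowly — but fast enough that $\kappa_N\to 0$ at some definite rate, say $\kappa_N\le N^{-3}$ (possible since $\kappa_N\to 0$, by taking $M_{N,1}^{\BY}$ increasing slowly to infinity, while keeping $M_{N,1}^{\BY}\le\log N$) — and then, since $\delta_N=\delta_N^{(\eps)}$ or $\delta_N^{\BY}$ is at most polynomially small (indeed $\delta_N^{\BY}\ge 1/\log N$), the ratio $\kappa_N/\delta_N$ is summable, and Borel--Cantelli applies again. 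One should double-check the coupling of the two choices: $\delta_N^{\BY}$ must be fixed first (from the first part) so that $M_{N,1}^{\BY}$ can then be chosen knowing $\delta_N:=\delta_N^{\BY}$; there is no circularity since $M_{N,1}^{\BY}$ does not feed back into the first estimate.

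The main obstacle — really the only subtlety — is the bookkeeping of ``sufficiently slowly'': one must verify that a single choice of $\delta_N^{\BY}\downarrow 0$ (respectively $M_{N,1}^{\BY}\uparrow\infty$) can be made to simultaneously (i) satisfy the stated quantitative constraints $\delta_N^{\BY}\ge 1/\log N$ and $M_{N,1}^{\BY}\le \log N$, and (ii) force the relevant tail expectations $\mb{E}[|J_{i_1,\ldots,i_p}|^{2p}\mbm 1_{|\cdot|>\delta_N^{\BY}\sqrt N}]$ and $\kappa_N$ to zero fast enough to beat the polynomial blow-up factors. This is a standard ``diagonal'' extraction: since each tail expectation tends to $0$ as its threshold $\to\infty$, one can always find a slowly-growing threshold sequence (staying below $\log N$, and with the corresponding $\delta$ staying above $1/\log N$) along which the convergence is, say, faster than any prescribed polynomial; I would phrase this cleanly by first defining an auxiliary nondecreasing integer sequence $T_N\to\infty$ via $T_N=\max\{T\le \log N : \mb{E}[|J_p|^{2p}\mbm 1_{|J_p|> T/2}]\le N^{-3}\ \forall p\}$ (which is well-defined and $\to\infty$ for large $N$), set $M_{N,1}^{\BY}=T_N$, and similarly for $\delta_N^{\BY}$ using the threshold $\delta\sqrt N$. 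The rest is routine.
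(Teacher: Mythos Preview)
Your approach has a genuine gap: you try to apply Borel--Cantelli by summing probabilities over \emph{all} $N\ge 1$, but under the bare $2p$-th moment hypothesis these sums need not converge. For \eqref{eq:the-sum-which-is-finite}, your bound is $\delta_N^{-2p}\,f(\delta_N\sqrt N)$ with $f(t):=\mb E[|J|^{2p}\mbm 1_{|J|>t}]$, and you claim one can take $\delta_N\ge 1/\log N$ making this $\le N^{-2}$. But nothing beyond $f(t)\to 0$ follows from $\mb E[|J|^{2p}]<\infty$; if for instance $f(t)\asymp 1/\log t$ then even with $\delta_N\equiv 1$ your bound is only $\asymp 1/\log N$, and no choice of $\delta_N\in[1/\log N,1]$ does better. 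Equivalently, $\sum_N N^p\,\mb P[|J|>\delta\sqrt N]\asymp \mb E[|J|^{2p+2}]$, which is not assumed finite. Your auxiliary sequence $T_N=\max\{T\le\log N:\mb E[|J|^{2p}\mbm 1_{|J|>T/2}]\le N^{-3}\}$ is therefore ill-defined in general, and the identical objection breaks your Markov-based treatment of \eqref{eq:external-field-BY-truncation}: with only $\mb E[|J_1|^2]<\infty$, there is no reason $\kappa_N\le N^{-3}$ is attainable with $M_{N,1}\le\log N$.

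The paper circumvents this in two different ways. For \eqref{eq:the-sum-which-is-finite} it first reduces to the dyadic subsequence $N_j=2^j$ (the events are essentially monotone in $N$ up to a factor of $2$ in the threshold), the point being that the dyadic sum $\sum_j 2^{jp}\,\mb P[|J|>\delta\, 2^{j/2}]$ is finite precisely when $\mb E[|J|^{2p}]<\infty$; thus it converges for each fixed $\delta>0$, and one then diagonalizes over $\delta\downarrow 0$. For \eqref{eq:external-field-BY-truncation} the paper abandons Borel--Cantelli and instead invokes the strong law of large numbers at each fixed threshold $m_1$ (so that almost surely $N^{-1}\sum_{i\le N}|J_i|^2\mbm 1_{|J_i|\ge m_1}\to \mb E[|J_1|^2\mbm 1_{|J_1|\ge m_1}]$), and then diagonalizes over $m_1\to\infty$.
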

\begin{proof}
For \eqref{eq:the-sum-which-is-finite} it suffices to handle the pure case where $\gamma_p=1$ and $\gamma_{p'}=0$ for all $p'\neq p$.
Indeed given suitable sequences $\big(\delta^{(p),\BY}_N\big)_{N\geq 1}$ for each pure case, taking $\delta_N^{\BY}=\max\limits_{1\leq p\leq P}\delta_N^{(p),\BY}$ then suffices in general. 
We set $N_j = 2^j$ and note that \eqref{eq:the-sum-which-is-finite} is finite if 
\[
\sum_{j\ge 1}\mbm{1}\big[\sup_{1\le p\le P}\sup_{i_1,\ldots,i_p\in [N_j]}|J_{i_1,\ldots,i_p}|\ge \delta_{N_j}^{\BY} \sqrt{N_j}/2\big]<\infty.
\]
As $\nu_p$ has bounded $2p$-th moment, we have for any fixed $\delta>0$:
\[\sum_{j\geq 1} N_j^{2pj} \mb{P}[|J_{i_1,\dots,i_p}|\geq \delta \sqrt{N_j}]<\infty.\] A standard diagonalization argument then proves that there exist $\delta_N^{\BY}\downarrow 0$ such that, as desired, 
\[
\sum_{j\geq 1}N_j^{2pj} \bbP[|J_{i_1,\dots,i_p}|\geq \delta_{N_j}^{\BY} \sqrt{N_j}]<\infty.
\]
The lower bound $\delta_N^{\BY}\geq 1/\log N$ is without loss of generality (and similarly for $M^{\BY}_{N,1}\leq \log N$).

For the second claim, fix a constant $m_1\ge 1$. By the strong law of large numbers, almost surely:
\[
\sum_{N\ge 1}\mbm{1}\bigg[\sum_{i=1}^{N}|J_{i}|^2\cdot \mbm{1}[|J_{i}|\ge m_1]
\ge
2\,\mb{E}\big[|J_{i}|^2\cdot \mbm{1}[|J_{i}|\ge m_1]\big]N\bigg]<\infty.
\]
Since $\lim\limits_{m_1\to\infty}\mb{E}\big[|J_{i}|^2\cdot \mbm{1}[|J_{i}|\ge m_1]\big]= 0$, a standard diagonalization argument completes the proof. 
\end{proof}

We take $\delta_{N}^{\BY}\downarrow 0$ and $M_{N,1}^{\BY}\uparrow \infty$ to be any sequences satisfying \cref{lem:diagonal}.
Recalling \eqref{eq:delta-M1-eps}, the parameters $(\delta,\eta,M,M_1)$ have now been defined in both cases.
For later use, we show the entries of $\vJ^{\msf{small}}$ have variance approximately $1$.

\begin{lemma}
\label{lem:approximate-conservation-of-variance}
    With $\delta\in \{\delta^{(\eps)},\delta^{\BY}\}$ as appropriate, there exists $\wt\delta\leq O_{P,\eps}(\delta)$ such that
    \begin{equation}
    \label{eq:approximate-conservation-of-variance-1}
    1-\wt\delta
    \leq
    \bbE[(J^{\msf{small}}_i)^2]
    \leq 1.
    \end{equation}
    Similarly for $p\geq 2$, and any $i_1,\dots,i_p\in [N]$:
    \begin{equation}
    \label{eq:approximate-conservation-of-variance-2}
    \frac{|\{i_1,\ldots,i_p\}|!}{p!}\cdot (1-\wt \delta)
    \le
    \bbE[(J^{\msf{small}}_{i_1,\dots,i_p})^2]
    \leq \frac{|\{i_1,\ldots,i_p\}|!}{p!}
    \end{equation}
\end{lemma}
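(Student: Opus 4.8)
The plan is to prove \cref{lem:approximate-conservation-of-variance} by bounding the variance lost when truncating away entries of magnitude larger than $\delta\sqrt{N}$ (for $p\geq 2$) or larger than $M_1/2$ (for $p=1$), using the moment estimates already recorded in \cref{lem:Markov}. The key identity is that for a centered random variable $J$ and truncation event $E$,
\[
\bbE[(J\cdot\one_E - \bbE[J\cdot\one_E])^2]
=
\bbE[J^2\cdot\one_E] - (\bbE[J\cdot\one_E])^2
=
\bbE[J^2] - \bbE[J^2\cdot\one_{E^c}] - (\bbE[J\cdot\one_{E^c}])^2,
\]
using $\bbE[J]=0$ so that $\bbE[J\cdot\one_E] = -\bbE[J\cdot\one_{E^c}]$. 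Since $\bbE[J^2]$ equals $\tfrac{|\{i_1,\ldots,i_p\}|!}{p!}$ in the setting of \cref{def:dis-bound} (and equals $1$ times the same factor in the $\nu_p$ setting of \cref{thm:bai-yin-var}, as each $\nu_p$ has variance $1$), the upper bound in \eqref{eq:approximate-conservation-of-variance-1}--\eqref{eq:approximate-conservation-of-variance-2} is immediate: we are subtracting two nonnegative quantities. So all the work is in the lower bound, i.e.\ showing $\bbE[J^2\cdot\one_{E^c}] + (\bbE[J\cdot\one_{E^c}])^2 \leq O_{P,\eps}(\delta)\cdot\tfrac{|\{i_1,\ldots,i_p\}|!}{p!}$.

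For $p\geq 2$ the tail event is $E^c = \{|J_{i_1,\ldots,i_p}| > M/2\}\cup\{$ larger scales $\}$; more precisely $J^{\msf{small}}$ keeps only $|J|\leq M/2$, so $E^c=\{|J|>M/2\}$. Wait — I should be careful: in the setting of \cref{thm:main} the normalization has $\bbE[J^2]$ possibly $N$-dependent but bounded, and $(C,\eps)$-moment bounds give $\bbE[|J|^{2p+\eps}]\leq C$, which is \emph{stronger} than $2p$ moments, so \eqref{eq:Markov-2} applies with the $2p$-moment hypothesis verified; taking $q=2$ and $\alpha\sqrt N = M/2$ with $M=N^{1/(4P)}$ gives $\bbE[|J|^2\cdot\one_{|J|>M/2}]\ll_p C N^{-p+1}(M/\sqrt N)^{-2p+2}$, which for $p\geq 2$ is polynomially small in $N$ — far smaller than $\delta$. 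However, the relevant truncation for $\vJ^{\msf{small}}$ in the reduction is really at scale $M/2$, but the full reduction discards everything above $\delta\sqrt N$; re-examining the statement, \eqref{eq:approximate-conservation-of-variance-2} concerns $J^{\msf{small}}_{i_1,\ldots,i_p}$ whose defining cutoff is $M/2$, so I use \eqref{eq:Markov-2} with $\alpha\sqrt N=M/2$ directly, and the bound is $\ll_p N^{-(p-1)}\cdot(\text{poly})$, comfortably $O(\delta)$. For $p=1$, $J^{\msf{small}}_i$ keeps $|J_i|\leq M_1/2$, so $E^c=\{|J_i|>M_1/2\}$; here I invoke \eqref{eq:Markov-3}, which gives $\bbE[|J_i|^2\cdot\one_{|J_i|>M_1/2}]\ll C\eps^{-1}M_1^{-\eps}$. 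With $M_1 = M_{N,1}^{(\eps)}=N^{1/(4P)}$ this is $\ll C\eps^{-1}N^{-\eps/(4P)}$, which is $O_{P,\eps}(\delta^{(\eps)})$ since $\delta^{(\eps)}=N^{-\eps/(4P)}$; in the Bai--Yin setting $M_{N,1}^{\BY}\uparrow\infty$ slowly and \eqref{eq:external-field-BY-truncation}/the construction in \cref{lem:diagonal} forces $\bbE[|J_i|^2\cdot\one_{|J_i|\geq M_{N,1}^{\BY}/2}]\to 0$, which we can arrange to be $\leq\delta_N^{\BY}$, giving the claim with $\wt\delta\leq O_{P,\eps}(\delta)$.

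Finally I dispose of the $(\bbE[J\cdot\one_{E^c}])^2$ term by Cauchy--Schwarz: $(\bbE[J\cdot\one_{E^c}])^2\leq \bbE[J^2\cdot\one_{E^c}]\cdot\bbP[E^c]\leq \bbE[J^2\cdot\one_{E^c}]$, so it is absorbed into the bound just obtained (losing at most a factor $2$). Collecting: for $p\geq 2$, $\bbE[(J^{\msf{small}}_{i_1,\ldots,i_p})^2]\geq \tfrac{|\{i_1,\ldots,i_p\}|!}{p!} - O_p(N^{-(p-1)/2}\text{-type})$, which is $\geq\tfrac{|\{i_1,\ldots,i_p\}|!}{p!}(1-\wt\delta)$ once we note $\tfrac{|\{i_1,\ldots,i_p\}|!}{p!}\geq 1/p!$ is bounded below, so the additive error converts to a multiplicative $(1-\wt\delta)$ with $\wt\delta\leq O_{P,\eps}(\delta)$; for $p=1$ the bound is exactly as computed. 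Setting $\wt\delta$ to be the maximum over $1\leq p\leq P$ of the resulting (relative) error terms completes the proof. The only mild subtlety — and the step I'd flag as needing the most care — is matching the Bai--Yin truncation parameters: one must invoke the freedom in choosing $\delta_N^{\BY}$ and $M_{N,1}^{\BY}$ from \cref{lem:diagonal} to ensure both that $\bbE[|J_i|^2\one_{|J_i|\geq M_{N,1}^{\BY}/2}]\leq\delta_N^{\BY}$ and that the higher-$p$ truncation losses (which are not polynomially small here since the cutoff $M/2 = N^{1/(4P)}/2$ is the same but $\nu_p$ only has $2p$ moments rather than $2p+\eps$) are still $o(1)$; this follows from \eqref{eq:Markov-2} with the $2p$-moment hypothesis, which holds by assumption in \cref{thm:bai-yin-var}, giving a loss $\ll_p N^{-(p-1)}\cdot(N^{1/(4P)})^{-2p+2}\to 0$ for $p\geq 2$, and we may shrink $\delta_N^{\BY}$ further (still $\geq 1/\log N$ is not required here, only $\downarrow 0$) to dominate it.
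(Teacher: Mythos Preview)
Your approach is correct and essentially identical to the paper's: both use the variance identity $\bbE[(J\one_E-\bbE[J\one_E])^2]=\bbE[J^2]-\bbE[J^2\one_{E^c}]-(\bbE[J\one_{E^c}])^2$, then bound the tail second moment via \eqref{eq:Markov-2} for $p\ge 2$ (with $\alpha\sqrt N=M/2$, yielding a loss $\ll_p M^{2-2p}\le M^{-2}\le\delta$) and via \eqref{eq:Markov-3} or the diagonalization in \cref{lem:diagonal} for $p=1$, absorbing the squared-mean term into the second-moment term. A couple of minor cleanups: the $p\ge 2$ loss is $M^{2-2p}=N^{-(p-1)/(2P)}$, not $N^{-(p-1)/2}$; and in the Bai--Yin case you do not need to ``shrink $\delta_N^{\BY}$'' --- rather, the freedom in \cref{lem:diagonal} is used to choose $M_{N,1}^{\BY}\uparrow\infty$ slowly enough that $\bbE[J_i^2\one_{|J_i|\ge M_{N,1}^{\BY}/2}]\le\delta_N^{\BY}$, while for $p\ge 2$ the polynomially small loss $M^{2-2p}$ is automatically $\le\delta_N^{\BY}\ge 1/\log N$.
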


\begin{proof}
    Firstly $\bbE[(J^{\msf{small}}_i)^2]$ is the variance of $J_i - (J_i\cdot \mbm{1}_{|J_i|\geq M_1/2})$, so
    \begin{equation}
    \label{eq:bias-variance-truncation}
    \bbE[(J^{\msf{small}}_i)^2]
    =
    1-\bbE[J_i^2\cdot \mbm{1}_{|J_i|\geq M_1/2}]
    -
    \bbE[(J_i\cdot \mbm{1}_{|J_i|\geq M_1/2})^2]
    \geq 
    1-2\,\bbE[J_i^2\cdot \mbm{1}_{|J_i|\geq M_1/2}].
    \end{equation}
    If $\vJ$ obeys $(C,\eps)$-moment bounds, then \eqref{eq:Markov-3} implies \eqref{eq:approximate-conservation-of-variance-1}, while in the other case the conclusion is implicit in \eqref{eq:external-field-BY-truncation}.
    
    We turn to proving \eqref{eq:approximate-conservation-of-variance-2} for $p\geq 2$.
    Noting that $\delta\geq M^{-2}=N^{-\frac{1}{2P}}$ in either case, \eqref{eq:Markov-2} yields
    \[
    \bbE[(J^{\msf{small}}_{i_1,\dots,i_p}\cdot \mbm{1}_{|J^{\msf{small}}_{i_1,\dots,i_p}|\geq M/2})^2]
    \leq 
    O_P(M^{-2})
    \leq 
    O_P(M^{-2p+2})
    \leq 
    O_P(\delta)
    .
    \]
    Arguing as in \eqref{eq:bias-variance-truncation} proves \eqref{eq:approximate-conservation-of-variance-2} as desired.
\end{proof}

We next show the contribution of $\vJ^{\msf{tail}}$ is uniformly negligible on $\cS_N$.

\begin{lemma}\label{lem:tail-rem}
Fix $\eps\in (0,1)$ and $C\ge 1$, and let $\delta^{(\eps)}$ and $M_1^{(\eps)}$ be as in \eqref{eq:delta-M1-eps}. If $\vec{J}$ obeys $(C,\eps)$-moment bounds, then
\[
\mb{P}[\oGS(\vec{J}^{\msf{tail}})\ge N^{1-\frac{\eps}{16P}}]\ll_{P,\eps}CN^{-\frac{\eps}{8P}}. 
\]
If $\vec{J}$ is instead as in \cref{thm:bai-yin-var}, we have the almost sure limit 
\[\lim_{N\to\infty}\oGS(\vec{J}^{\msf{tail}})/N = 0.\]
\end{lemma}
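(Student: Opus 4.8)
\emph{Strategy.} I would split $H_N(\cdot;\vJ^{\msf{tail}})=\sum_{p=1}^P H_N^{(p)}(\cdot;\vJ^{\msf{tail}})$ and bound the higher terms ($p\ge 2$) and the external field ($p=1$) by different mechanisms, since only the former can be made to vanish with high probability. For $p\ge 2$: on the event $\cE_N=\{\sup_{2\le p\le P}\sup_{i_1,\ldots,i_p}|J_{i_1,\ldots,i_p}|<\delta\sqrt N\}$ every indicator $\mbm{1}_{|J_{i_1,\ldots,i_p}|>\delta\sqrt N}$ vanishes, so $J^{\msf{tail}}_{i_1,\ldots,i_p}$ collapses to the deterministic bias $-\mb{E}[J_{i_1,\ldots,i_p}\mbm{1}_{|J_{i_1,\ldots,i_p}|>\delta\sqrt N}]$, which by \eqref{eq:Markov-2} with $q=1$ has absolute value $\ll_p CN^{-p+1/2}\delta^{-2p+2}$ uniformly in the indices. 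The crude bound $|\langle a,\bsig^{\otimes p}\rangle|\le \snorm{a}_\infty(\sum_i|\sigma_i|)^p\le \snorm{a}_\infty N^p$, valid for all $\bsig\in\cS_N$ since $\sum_i|\sigma_i|\le\sqrt N\snorm{\bsig}_2=N$, then gives $\sup_{\bsig\in\cS_N}|H_N^{(p)}(\bsig;\vJ^{\msf{tail}})|\ll_p CN^{(2-p)/2}\delta^{-2p+2}$ on $\cE_N$; for $\delta=\delta^{(\eps)}=N^{-\eps/(4P)}$ this is $\ll_P CN^{\eps/(2P)}$ (worst case $p=2$), and \eqref{eq:Markov-1} with $\alpha=\delta$ gives $\mb{P}[\cE_N^c]\ll_P C\delta^{-2P-\eps}N^{-\eps}\ll_{P,\eps}CN^{-\eps/4}$. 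In the Bai--Yin setting, $\cE_N$ holds for all large $N$ a.s.\ by \eqref{eq:the-sum-which-is-finite}, and since $\mb{E}[|\nu_p|^{2p}]<\infty$ and $\delta^{\BY}_N\ge 1/\log N$ the same estimate yields $\sup_{\bsig}|H_N^{(p)}(\bsig;\vJ^{\msf{tail}})|\ll_p N^{(2-p)/2}(\log N)^{2P}=o(N)$ for each $p\ge 2$.

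\emph{The external field.} Here the indicator cannot be removed, so I would use Cauchy--Schwarz: $\sup_{\bsig\in\cS_N}|H_N^{(1)}(\bsig;\vJ^{\msf{tail}})|=\gamma_1\sqrt N\,\snorm{\vJ^{\msf{tail},(1)}}_2$, reducing matters to a bound on $\snorm{\vJ^{\msf{tail},(1)}}_2^2=\sum_i(J^{\msf{tail}}_i)^2$. In the $(C,\eps)$ case, \eqref{eq:Markov-3} gives $\mb{E}[(J^{\msf{tail}}_i)^2]\le \mb{E}[J_i^2\mbm{1}_{|J_i|>M_1/2}]\ll C\eps^{-1}M_1^{-\eps}$, so with $M_1=M_1^{(\eps)}=N^{1/(4P)}$ we get $\mb{E}\,\snorm{\vJ^{\msf{tail},(1)}}_2^2\ll C\eps^{-1}N^{1-\eps/(4P)}$, and Markov gives $\snorm{\vJ^{\msf{tail},(1)}}_2\le c_\gamma N^{1/2-\eps/(16P)}$ off an event of probability $\ll_{P,\eps}CN^{-\eps/(8P)}$; choosing the small constant $c_\gamma$ so the $\gamma_1$ cancels, $\sup_{\bsig}|H_N^{(1)}(\bsig;\vJ^{\msf{tail}})|\le \tfrac12 N^{1-\eps/(16P)}$ on the good event. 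In the Bai--Yin case, bounding $(J^{\msf{tail}}_i)^2\le 2J_i^2\mbm{1}_{|J_i|>M_1/2}+2(\mb{E}[J_i\mbm{1}_{|J_i|>M_1/2}])^2$, the first sum over $i$ is $<\delta^{\BY}_N N=o(N)$ for all large $N$ a.s.\ by \eqref{eq:external-field-BY-truncation} and the second is $\le 8N/M^{\BY}_{N,1}=o(N)$, so $\snorm{\vJ^{\msf{tail},(1)}}_2=o(\sqrt N)$ and $\sup_{\bsig}|H_N^{(1)}(\bsig;\vJ^{\msf{tail}})|=o(N)$ a.s.

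\emph{Conclusion and the hard part.} Combining via $\oGS(\vJ^{\msf{tail}})\le\sum_{p=1}^P\sup_{\bsig\in\cS_N}|H_N^{(p)}(\bsig;\vJ^{\msf{tail}})|$, and noting that for large $N$ the $p\ge2$ contribution ($\ll_P CN^{\eps/(2P)}$ on $\cE_N$) also fits under $\tfrac12 N^{1-\eps/(16P)}$, gives $\mb{P}[\oGS(\vJ^{\msf{tail}})\ge N^{1-\eps/(16P)}]\ll_{P,\eps}CN^{-\eps/(8P)}$, and in the Bai--Yin setting $\oGS(\vJ^{\msf{tail}})/N\to 0$ a.s.\ (both pieces being $o(N)$). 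I expect the external-field term to be the one delicate point: it survives both the vanishing-indicator trick and the crude tensor bound, forcing the Cauchy--Schwarz route and the observation that $\snorm{\vJ^{\msf{tail},(1)}}_2$ is genuinely below $\sqrt N$ --- which is exactly where the sharp $2+\eps$ moment of $\nu_1$ (resp.\ the strong-law input \eqref{eq:external-field-BY-truncation}) is consumed; everything else is a routine Markov / Borel--Cantelli computation.
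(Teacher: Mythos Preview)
Your proposal is correct and follows essentially the same approach as the paper: split into $p=1$ versus $p\ge 2$, handle $p\ge 2$ by observing that on the high-probability event $\cE_N$ only the deterministic bias survives (bounded via \eqref{eq:Markov-2} and the crude $\|\bsig\|_1\le N$ tensor estimate), and handle $p=1$ by Cauchy--Schwarz together with \eqref{eq:Markov-3} (resp.\ \eqref{eq:external-field-BY-truncation}). The only cosmetic difference is in the intermediate numerology for the $p\ge 2$ bias (you get $\ll_P CN^{\eps/(2P)}$, the paper records $\le N^{2/3}$), but both are comfortably sublinear.
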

\begin{proof}
We first handle the $p=1$ disorder $\vJ^{(1)}$. In either case, 
\begin{align*}
\sup_{\boldsymbol{\sigma}\in \mc{S}_N}
\bigg|&\sum_{1\le i\le N}\sigma_i\cdot \big(J_{i_1,\ldots,i_p}\cdot \mbm{1}_{|J_{i_1,\dots,i_p}|> M_1}-\mb{E}[J_{i_1,\ldots,i_p}\cdot \mbm{1}_{|J_{i_1,\dots,i_p}|> M_1}]\big)\big|\\
&\ll \sqrt{N}\cdot \bigg(\sum_{i=1}^{N}|J_{i_1,\ldots,i_p}|^2\cdot \mbm{1}_{|J_{i_1,\dots,i_p}|> M_1} + \mb{E}[|J_{i_1,\ldots,i_p}|^2\cdot \mbm{1}_{|J_{i_1,\dots,i_p}|> M_1}]\bigg)^{1/2}.
\end{align*}
This is bounded by either \eqref{eq:Markov-3} in \cref{lem:Markov} (and a further application of Markov's inequality) in the case that $\vJ$ obeys $(C,\eps)$-moment bounds or \cref{lem:diagonal} in the setting of \cref{thm:bai-yin-var}.

We then handle the portion of the tensor with $p\ge 2$; we apply triangle inequality to merge these results. In the case of $(C,\eps)$-moment bounds, by applying \cref{lem:Markov}, it follows that all entries are bounded by $\delta^{(\eps)} \sqrt{N}$, except with probability $\ll_{P}CN^{-\frac{\eps}{4P}}$. Therefore it suffices to handle the terms which arise from $\mb{E}[J_{i_1,\ldots,i_p}\cdot \mbm{1}_{|J_{i_1,\dots,i_p}|> \delta^{(\eps)}\sqrt{N}}]$. From \eqref{eq:Markov-2} in \cref{lem:Markov}, we have 
\[
\mb{E}[J_{i_1,\ldots,i_p}\cdot \mbm{1}_{|J_{i_1,\dots,i_p}|> \delta^{(\eps)}\sqrt{N}}]\ll_{P}CN^{-p + \frac{1}{2}}N^{\frac{(2p-2)\eps}{4P}}\ll_{P}CN^{-p+1}.
\]
Therefore
\begin{align*}
\sup_{\boldsymbol{\sigma}\in \mc{S}_N}\bigg|
&\sum_{p=2}^{P}
\frac{\gamma_p}{N^{(p-1)/2}}
\sum_{\substack{1\le \ell\le p\\ 1\le i_{\ell}\le N}}
\mb{E}
\big[
J_{i_1,\ldots,i_p}
\cdot 
\mbm{1}_{|J_{i_1,\dots,i_p}|
>
\delta^{(\eps)}\sqrt{N}}
\big] 
\sigma_{i_1}\cdots\sigma_{i_p}\bigg|\\
&\ll_{P} \sup_{\boldsymbol{\sigma}\in \mc{S}_N}\sum_{p=2}^{P}\frac{\gamma_p}{N^{(p-1)/2}} \cdot  CN^{-p+1} \snorm{\sigma}_{1}^{p}\le N^{2/3}.
\end{align*}
In the final inequality we used that $N$ is sufficiently large. 

In the setting of \cref{thm:bai-yin-var}, \cref{lem:diagonal} implies that for all but finitely many $N$, it suffices to consider the contribution from $\mb{E}\big[J_{i_1,\ldots,i_p}\cdot \mbm{1}_{|J_{i_1,\dots,i_p}|> \delta^{\BY}\sqrt{N}}\big]$. These are handled exactly as in the previous case (since \eqref{eq:Markov-2} in \cref{lem:Markov} used only a $2p$-th moment bound).
\end{proof}

We have now removed the largest tensor entries. In the next step we handle entries which are slightly polynomially smaller.
The main observation is that it is unlikely for there to be multiple tensor entries of the relevant size which share any index $i\in [N]$. 
This step is important for the setting of \cref{thm:bai-yin-var} as the analysis of $J_{i_1,\ldots,i_p}^{\msf{scale}(j)}$ naturally loses logarithmic factors.

\begin{lemma}\label{lem:large-rem}
Fix $\eps\in (0,1)$ and $C\ge 1$. If $\vec{J}$ obeys $(C,\eps)$-moment bounds, then
\begin{equation}
\label{eq:large-rem-1}
\mb{P}[\oGS(\vec{J}^{\msf{large}})\ge N^{1-\frac{\eps}{5P}}]\ll_{P}CN^{-1/2}. 
\end{equation}
If instead $\vec{J}$ is as in \cref{thm:bai-yin-var}, we have the almost sure limit  
\begin{equation}
\label{eq:large-rem-2}
\lim_{N\to\infty}\oGS(\vec{J}^{\msf{large}})/N = 0.
\end{equation}
\end{lemma}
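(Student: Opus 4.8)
The plan is to bound $\oGS(\vec J^{\msf{large}})$ using two features of the truncated disorder: every surviving entry has magnitude at most $\delta\sqrt N$, and---with the probability asserted in \eqref{eq:large-rem-1}---no coordinate $i\in[N]$ lies in the support of more than a constant number of surviving tensor entries. Note first that $\vec J^{\msf{large}}$ has no $p=1$ component, so only $2\le p\le P$ matter. Next I would peel off the centering corrections: each $\mb{E}[J_{i_1,\dots,i_p}\mbm{1}_{|J_{i_1,\dots,i_p}|\in[\eta\sqrt N,\delta\sqrt N]}]$ has absolute value at most $\mb{E}[|J_{i_1,\dots,i_p}|\mbm{1}_{|J_{i_1,\dots,i_p}|\ge\eta\sqrt N}]\ll_P CN^{-p+1}$ by \eqref{eq:Markov-2} (which needs only $2p$ moments, hence applies in both settings, or \cref{lem:diagonal} in the Bai--Yin case), and since $\snorm{\bsig}_1\le N$ on $\cS_N$ their contribution to $\oGS(\vec J^{\msf{large}})$ is at most $\sum_{p=2}^P\tfrac{\gamma_p}{N^{(p-1)/2}}\,O_P(CN^{-p+1})\,\snorm{\bsig}_1^p\ll_P CN^{1/2}$, exactly as in the proof of \cref{lem:tail-rem}. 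This leaves $\sup_{\bsig\in\cS_N}$ of the genuinely truncated part $\sum_{p=2}^P\tfrac{\gamma_p}{N^{(p-1)/2}}\sum_{i_1,\dots,i_p}J_{i_1,\dots,i_p}\mbm{1}_{|J_{i_1,\dots,i_p}|\in[\eta\sqrt N,\delta\sqrt N]}\sigma_{i_1}\cdots\sigma_{i_p}$.

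\emph{Reduction to a degree bound.} Group ordered tuples by their underlying multi-set $T$ (of size $p$, support $S(T)$, multiplicities $m_i(T)$), and put $s_T:=\sum_{i\in S(T)}\sigma_i^2\le N$. The elementary inequality $\prod_{i\in S(T)}|\sigma_i|^{m_i(T)}\le\big(\max_{i\in S(T)}\sigma_i^2\big)^{p/2}\le s_T^{p/2}\le N^{p/2-1}s_T$ handles \emph{every} multi-set, diagonal ones included. Writing $E_p=\{T:|T|=p,\ |J_T|\ge\eta\sqrt N\}$, $d_i^{(p)}=|\{T\in E_p:i\in S(T)\}|$, and $D^{(p)}=\max_i d_i^{(p)}$, one has $\sum_{T\in E_p}s_T=\sum_i\sigma_i^2 d_i^{(p)}\le D^{(p)}N$, hence $\sum_{T\in E_p}\prod_{i\in S(T)}|\sigma_i|^{m_i(T)}\le N^{p/2-1}D^{(p)}N=D^{(p)}N^{p/2}$. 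Combining this with $|J_T\mbm{1}_{|J_T|\in[\eta\sqrt N,\delta\sqrt N]}|\le\delta\sqrt N$, with the bound $p!$ on the orderings of $T$, and with the centering estimate gives
\[
\oGS(\vec J^{\msf{large}})\ \le\ O_P\!\Big(\delta N\cdot\max_{2\le p\le P}D^{(p)}\Big)\ +\ O_P\!\big(CN^{1/2}\big).
\]
So it suffices to show $\max_p D^{(p)}=O_P(1)$ with the required probability.

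\emph{The degree estimate.} Fix $i$ and $p$. There are at most $pN^{p-1}$ multi-sets $T$ of size $p$ with $i\in S(T)$; the $\{J_T\}$ over distinct multi-sets are independent with $\mb{P}[|J_T|\ge\eta\sqrt N]\le C(\eta\sqrt N)^{-(2p+\eps)}$ (Markov and the $(C,\eps)$-moment bound). Since $\eta=N^{-1/(4P)}$,
\[
\mb{P}\big[d_i^{(p)}\ge k\big]\ \le\ \binom{pN^{p-1}}{k}\Big(C(\eta\sqrt N)^{-(2p+\eps)}\Big)^{k}\ \le\ \frac{(pC)^{k}}{k!}\,N^{-\theta_p k},\qquad \theta_p:=1+\tfrac{\eps}{2}-\tfrac{2p+\eps}{4P}\ \ge\ \tfrac12 .
\]
Taking $k=4$ and a union bound over $i\in[N]$ and $2\le p\le P$ gives $\mb{P}[\max_p D^{(p)}\ge4]\ll_P C^{4}N^{-1}\ll_P CN^{-1/2}$ for large $N$; on the complement, $\oGS(\vec J^{\msf{large}})\le O_P(N^{1-\eps/(4P)})+O_P(CN^{1/2})\le N^{1-\eps/(5P)}$ once $N$ is large, which is \eqref{eq:large-rem-1}. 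For \eqref{eq:large-rem-2} the identical computation holds with the $2p$-th moment of $\nu_p$ in place of the $(2p+\eps)$-th moment and $(\eta\sqrt N)^{-2p}$ in place of $(\eta\sqrt N)^{-(2p+\eps)}$, producing $\mb{P}[D^{(p)}\ge k]\ll_p N^{1-k/2}$; with $k=5$ the first Borel--Cantelli lemma yields $\max_p D^{(p)}\le4$ for all large $N$ almost surely, so $\oGS(\vec J^{\msf{large}})\le O_P(\delta_N^{\BY}N)+O_P(N^{1/2})=o(N)$ since $\delta_N^{\BY}\downarrow0$.

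\emph{Main obstacle.} The substantive step is recognizing that the diagonal (repeated-index) tensor entries need no separate treatment: the crude bound $\prod_i|\sigma_i|^{m_i(T)}\le s_T^{p/2}$ folds them into the same degree count. After that, the real content is the arithmetic of the exponents---precisely the inequality $\theta_p\ge\tfrac12$ when $\eta=N^{-1/(4P)}$, which is what renders $N\cdot(\text{per-coordinate tail})^{k}$ summable for a bounded $k$. The remaining wrinkles are routine: the factor $(pC)^{k}$ is absorbed into ``$N$ sufficiently large'' (permitted by the $\ll_P$ convention), and in the Bai--Yin case the freedom to send $\delta_N^{\BY}\downarrow0$ arbitrarily slowly overcomes the $O(N)$ worst-case contribution of a single surviving entry.
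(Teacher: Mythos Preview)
Your argument is correct and follows the same overall strategy as the paper: handle the centering terms deterministically via \eqref{eq:Markov-2}, then show that the surviving large entries cannot cluster on any coordinate, which forces their total contribution to be $O_P(\delta N)$. The implementations differ slightly. The paper works with the event that \emph{no two} large entries share an index (i.e.\ $D^{(p)}\le 1$) and on that event uses the AM--GM bound $|\sigma_{i_1}\cdots\sigma_{i_p}|\ll_p\sum_\ell|\sigma_{i_\ell}|^p$ together with disjointness of supports to reduce to $\snorm{\bsig}_p^p\le N^{p/2}$. You instead allow $D^{(p)}=O_P(1)$ and use the pointwise inequality $\prod_i|\sigma_i|^{m_i(T)}\le N^{p/2-1}s_T$, summing $s_T$ via the degree; this is a clean variant that sidesteps the disjoint-support bookkeeping and handles diagonal entries uniformly. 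One cosmetic point: your final step ``$C^4N^{-1}\ll_P CN^{-1/2}$'' tacitly lets the threshold on $N$ depend on $C$; this is harmless here but you could equally take $k=3$ to land directly on $O_P(C^3)N^{-1/2}$.
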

\begin{proof}
We assume $p\ge 2$ below, since the $p=1$ component of $\vec{J}^{\msf{large}}$ is zero by definition. 
Let $E_N(\eta/2)$ be the event that there exist $(i_1,\dots,i_p),(i_1',\dots,i_p')$ with some shared index $i_j=i_k'$ and
\[
    \min\big(|J_{i_1,\dots,i_p}|,|J_{i_1',\dots,i_p'}|\big)
    \geq 
    \eta\sqrt{N}/2.
\]
Its probability is upper bounded by
\[
\bbP[E_N(\eta/2)]
\leq 
\sum_{2\le p\le P}pN\cdot N^{2p-2} \cdot \bigg(\sup_{i_1,\ldots,i_p\in [N]}\frac{\mb{E}[|J_{i_1,\ldots,i_p}|^{2p}]}{(N^{1-\frac{1}{4P}}/2)^{2p}}\bigg)^2\ll_{P} \sup_{i_1,\ldots,i_p\in [N]}\mb{E}[|J_{i_1,\ldots,i_p}|^{2p}]N^{-1/2}.
\]
In the setting of \cref{thm:bai-yin-var}, $E_N(\eta/2)$ occurs for finitely many values $N=2^k$ by Borel--Cantelli, hence $E_N(\eta)$ occurs for finitely many $N\geq 1$.

We assume below that $E_N(\eta)$ does not hold. Then with $\delta\in \{\delta^{(\eps)},\delta^{\BY}\}$ as appropriate, we have deterministically:
\begin{align*}
\sup_{\boldsymbol{\sigma}\in \mc{S}_N}&\bigg|\sum_{p=2}^{P}\frac{\gamma_p}{N^{(p-1)/2}}\sum_{\substack{1\le \ell\le p\\ 1\le i_{\ell}\le N}}J_{i_1,\ldots,i_p}\cdot \mbm{1}_{|J_{i_1,\dots,i_p}|\in[\eta\sqrt{N},\delta\sqrt{N}]} \sigma_{i_1}\cdots\sigma_{i_p}\bigg|\\
&\ll_{P} \sup_{\boldsymbol{\sigma}\in \mc{S}_N}\sum_{p=2}^{P}\frac{\gamma_p}{N^{(p-1)/2}}\sum_{\substack{1\le \ell\le p\\ 1\le i_{\ell}\le N}}\delta\sqrt{N}\cdot \mbm{1}_{|J_{i_1,\dots,i_p}|\in[\eta\sqrt{N},\delta\sqrt{N}]}(|\sigma_{i_1}|^p + \cdots + |\sigma_{i_p}|^p)\\
&\ll_{P}\sup_{\boldsymbol{\sigma}\in \mc{S}_N}\sum_{p=2}^{P}\frac{\gamma_p}{N^{(p-1)/2}}\cdot \delta\sqrt{N} \snorm{\sigma}_p^{p}
\\
&\ll_P \delta N.
\end{align*}
The expectation terms in the definition of $\vec{J}^{\msf{large}}$ are bounded deterministically, using \eqref{eq:Markov-2} with $\eta=N^{-\frac{1}{4P}}$ in place of $\alpha$ therein:
\begin{align*}
\sup_{\boldsymbol{\sigma}\in \mc{S}_N}\bigg|\sum_{p=2}^{P}\frac{\gamma_p}{N^{(p-1)/2}}\sum_{\substack{1\le \ell\le p\\ 1\le i_{\ell}\le N}}\mb{E}[J_{i_1,\ldots,i_p}\cdot \mbm{1}_{|J_{i_1,\dots,i_p}|\in[\eta\sqrt{N},\delta\sqrt{N}]}] \sigma_{i_1}\cdots\sigma_{i_p}\bigg|
&\ll \sum_{p=2}^{P}|\gamma_p| N^p\cdot \frac{CN^{-p+\frac{1}{2}}N^{\frac{p-1}{2P}}}{N^{(p-1)/2}}
\\
&
\ll_P CN^{1/2}. 
\end{align*}
Combining directly yields \eqref{eq:large-rem-1} since $\delta N=N^{1-\frac{1}{4P}}$ is much smaller than $N^{1-\frac{1}{5P}}$. Since $\delta_N^{\BY}\downarrow 0$, \eqref{eq:large-rem-2} also follows.
\end{proof}

We now truncate moderate entries by multi-scale analysis, treating both settings identically. 
\begin{lemma}\label{lem:bound}
Fix $M = N^{\frac{1}{4P}}$ and $\eta = N^{-\frac{1}{4P}}$. Suppose that 
\[
\sup_{
\substack{1\le p\le P
\\
i_1,\ldots,i_p\in [N]}
}
\mb{E}[|J_{i_1,\ldots,i_p}|^{2p}]\le C.
\]
Then
\[\mb{P}\bigg[\sum_{0\le j\le J}\oGS(\vec{J}^{\msf{scale}(j)})\ge CN^{1-\frac{1}{5P}}\bigg]\ll_{P} N^{-\omega(1)}.\] 
\end{lemma}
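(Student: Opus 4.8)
\emph{Proof plan.}
The plan is to work directly with the mean‑zero tensors $\vec J^{\msf{scale}(j),(p)}$ (the $p=1$ components vanish by definition), so that for a fixed $\bsig\in\cS_N$ the quantity $H_N^{(p)}(\bsig;\vec J^{\msf{scale}(j)})$ is a sum of independent mean‑zero terms; crucially, the truncation ``bias'' should not be peeled off but used for free through this recentering. Write $T_j:=M2^j$; by \eqref{eq:Markov-2} each entry of $\vec J^{\msf{scale}(j),(p)}$ is bounded by $T_j$ in absolute value, has second moment $v_{j,p}\ll_p CT_j^{2-2p}$, and is negligibly small unless $|J_e|\in[M2^{j-1},M2^j)$, which happens for only $\ll_p N^pT_j^{-2p}$ of the $N^p$ entries. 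As a sanity check, summing the ``Gaussian‑level'' bound $\gamma_p\sqrt{v_{j,p}}\,N$ over $0\le j\le J$ gives $\ll_p\gamma_p\sqrt C\,NM^{1-p}\le\sqrt C\,N^{1-1/(4P)}$, below the target $CN^{1-1/(5P)}$ by a polynomial factor $N^{\Omega_P(1)}$; so it will suffice to prove the Gaussian‑level bound uniformly over $\cS_N$, up to $\mathrm{polylog}(N)$ losses, for each $p$ and each scale $j$ separately, on an event of probability $1-N^{-\omega(1)}$, the final union bound being over only $O(\log N)$ scales.

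Fixing $p$ and $j$ and setting $K:=\vec J^{\msf{scale}(j),(p)}$, the core step is a multi‑scale decomposition of $\bsig$. I would split $\bsig=\sum_a\bsig^{(a)}$ over the $O(\log N)$ dyadic coordinate blocks $B_a=\{i:|\sigma_i|\in[2^a,2^{a+1})\}$ (discarding coordinates of size $\le N^{-10}$ with negligible error), so that $|B_a|\le N2^{-2a}$, $\|\bsig^{(a)}\|_\infty<2^{a+1}$, and $\sum_a\|\bsig^{(a)}\|_2^2=N$. Expanding, $\langle K,\bsig^{\otimes p}\rangle=\sum_{\mathbf a=(a_1,\dots,a_p)}\langle K,\bsig^{(a_1)}\otimes\cdots\otimes\bsig^{(a_p)}\rangle$ becomes a sum of $O((\log N)^p)$ multilinear forms, each bounded by $\bigl(\prod_\ell\|\bsig^{(a_\ell)}\|_2\bigr)$ times the multilinear injective norm of $K$ restricted to $B_{a_1}\times\cdots\times B_{a_p}$. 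That restricted norm I would estimate by interleaving two regimes, depending on whether a unit test direction $u^{(\ell)}$ on $B_{a_\ell}$ is spread out (small $\ell^\infty$) or concentrated: on spread‑out directions, place a polynomially fine $\ell^2$‑net and, for each tuple of net points, apply a Bernstein/Bennett tail bound to the independent mean‑zero sum $\sum_e K_e\prod_\ell u^{(\ell)}_{i_\ell}$ (variance $v_{j,p}$, sup‑norm $T_j$), using the Bennett form so the $\log N$ from the net entropy is not transferred to the heavy‑tail range; on concentrated directions, where sign cancellation is unavailable but only few scale‑$j$‑sized entries lie in the (small) coordinate box, use the Frobenius estimate $\|K|_E\|_F\le T_j\sqrt{\#\{\text{scale-}j\text{ entries in }E\}}+(\text{negligible})$, the count being controlled uniformly over all boxes $E$ by a Chernoff bound for a sum of indicators of mean $\ll_p T_j^{-2p}\prod_\ell|B_{a_\ell}|$ (the union over $\le N^{O(\sum_\ell|B_{a_\ell}|)}$ boxes being absorbed by an affordable $\log^2N$ slack in the count). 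Combining these — with one further dyadic split of the $u^{(\ell)}$'s to route each direction to the right regime — and summing over block‑combinations via $\sum_a\|\bsig^{(a)}\|_2\le\sqrt{O(\log N)}\cdot\sqrt N$, this should yield $\sup_{\bsig\in\cS_N}|\langle K,\bsig^{\otimes p}\rangle|\ll_p\sqrt{v_{j,p}}\,(\log N)^{O_p(1)}\,N^{(p+1)/2}$ with probability $1-N^{-\omega(1)}$.

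Granting this, one gets $\oGS(\vec J^{\msf{scale}(j)})\le\sum_{p\ge2}\frac{\gamma_p}{N^{(p-1)/2}}\sup_{\bsig}|\langle\vec J^{\msf{scale}(j),(p)},\bsig^{\otimes p}\rangle|\ll_P\sqrt C\,(\log N)^{O_P(1)}\,N\sum_{p\ge2}T_j^{1-p}$, simultaneously for all $0\le j\le J$ after a union bound costing an extra $N^{-\omega(1)}$. Since $\sum_{j=0}^J T_j^{1-p}\ll_P M^{1-p}$ for each $p\ge2$, summing the geometric series over scales gives $\sum_{0\le j\le J}\oGS(\vec J^{\msf{scale}(j)})\ll_P\sqrt C\,(\log N)^{O_P(1)}\,NM^{-1}=\sqrt C\,(\log N)^{O_P(1)}\,N^{1-1/(4P)}$, which is $\le CN^{1-1/(5P)}$ for $N$ large because $N^{1/(20P)}=N^{1/(4P)-1/(5P)}$ dominates any fixed power of $\log N$.

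The step I expect to be the main obstacle is the per‑scale estimate of $\sup_{\bsig\in\cS_N}|\langle\vec J^{\msf{scale}(j),(p)},\bsig^{\otimes p}\rangle|$ in the ``dense'' regime of small $j$, where $\asymp N^pT_j^{-2p}$ entries of size $\asymp T_j$ are spread over all indices: the plain Frobenius estimate overcounts by a factor $\asymp\sqrt N$ there because it is blind to sign cancellations, so one genuinely needs concentration — effectively a sparse random matrix/tensor operator‑norm bound — along delocalized directions, while concentration is useless along localized directions, where a single coordinate of size $\sqrt N$ inflates the Bernstein boundedness term to $\asymp T_j N^{p/2}$ and one must instead use sparsity of large entries inside small coordinate boxes. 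Carrying out this interleaving uniformly over all $O((\log N)^p)$ block‑combinations, all coordinate boxes, and all $O(\log N)$ scales without any single estimate losing more than the available $N^{\Omega_P(1)}$ slack is the delicate point; already the case $p=2$ amounts to essentially sharp edge‑of‑spectrum control of sparse random matrices, which is exactly the source of the sharp moment threshold.
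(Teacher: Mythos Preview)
Your plan is correct and closely matches the paper's proof in outline: the dyadic decomposition of $\bsig$, the use of the sparsity $\mb P[|J_{i_1,\dots,i_p}|\in[T_j/2,T_j)]\ll_p T_j^{-2p}$, concentration plus $\varepsilon$-nets on each block-combination, the polylog bookkeeping, and the final geometric sum $\sum_j T_j^{1-p}\ll_P M^{1-p}=N^{-(p-1)/(4P)}$ giving the target $N^{1-1/(4P)+o(1)}$ are all exactly what the paper does.

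Where the paper differs is in the core concentration step, and its route is cleaner than your two-regime split. First, it \emph{symmetrizes}: replacing $\vJ^{\msf{scale}(j)}$ by $\vJ^{\msf{sym}(j)}:=\vJ^{\msf{scale}(j)}-\wt{\vJ}^{\msf{scale}(j)}$ (independent copy) costs only a factor $2$ and makes the entries, conditional on their support $\cF_j$, sign-symmetric and bounded by $2T_j$ --- hence sub-Gaussian by Hoeffding, so the Bernstein/Bennett heavy-tail regime you anticipate never arises. Second, the sparsity is packaged as a \emph{single} Chernoff event $E_j$: every $(p-1)$-tuple $(i_1,\dots,i_{p-1})$ lies in at most $O_P(\eps_j^{(p)}N+\log^2 N)$ nonzero entries of $\vJ^{\msf{sym}(j),(p)}$. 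This holds with probability $1-N^{-\omega(1)}$ after a union over only $N^{p-1}$ tuples, so there is no union over coordinate boxes at all. On $E_j$ and conditional on $\cF_j$, the Hoeffding variance proxy for $\langle\bsig^{(k_1)}\otimes\cdots\otimes\bsig^{(k_p)},\vJ^{\msf{sym}(j),(p)}\rangle$ is then bounded directly by $(T_j)^2\cdot N^p\cdot(\eps_j^{(p)}N+\log^2 N)/2^{2k_p}$, and a single net over vectors with $\le 2^{2k_\ell}$ nonzero entries each of size $\le 2^{-k_\ell}\sqrt N$ finishes, giving the bound $(\log N)^{O_P(1)}N^{p/2}\max\bigl(T_j^{1-p}\sqrt N,\,T_j\bigr)$. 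The two terms in the max correspond exactly to your ``spread-out'' (variance-dominated) and ``concentrated'' (sparsity-dominated) regimes, but they emerge from one computation rather than a case split. Your approach would go through with more work; the symmetrization and the $(p-1)$-tuple fan-out bound are the two tricks that dissolve the obstacle you flagged in your final paragraph.
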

\begin{proof}
By a union bound, it suffices to prove that for all $0\le j\le J$ and $2\le p\le P$, 
\[\mb{P}\bigg[\oGS(\vec{J}^{\msf{scale}(j),(p)})\ge CN^{1-\frac{1}{5P}}/(\log N)^{2}\bigg]\ll_P N^{-\omega(1)}.\] 
Here $\vec{J}^{\msf{scale}(j),(p)}$ denotes the $p$-tensor portion of $\vec{J}^{\msf{scale}(j)}$. We replace the law of $J^{\scale(j)}_{i_1,\dots,i_p}$ with a symmetrization $J^{\msf{sym}(j)}$ containing entries:
\[J^{\msf{sym}(j)}_{i_1,\dots,i_p}:= J^{\scale(j)}_{i_1,\dots,i_p}-\wt J^{\scale(j)}_{i_1,\dots,i_p}.\] 
Here $\wt J^{\scale(j)}$ is an independent copy of $J^{\scale(j)}$. By fixing $\bsig\in\cS_N$ achieving the value $\oGS(J^{\scale(j),(p)})$ and applying a truncated analog of \eqref{eq:gaussian-covariance} to $\wt J^{\scale(j)}$, we obtain
\begin{align*}
\mb{P}&\big[\oGS(\vJ^{\msf{sym}(j),(p)})\geq CN^{1-\frac{1}{5P}}/(2\log^2 N)~|~\oGS(\vec J^{\scale(j)})\geq CN^{1-\frac{1}{5P}}/(\log^2 N)\big]\geq 1/2.
\end{align*}
Note the entries of $J^{\msf{sym}(j)}$ are invariant in law under sign flips. We let $\eps^{(p)}_j=C(M2^j)^{-2p}$.
Since the expectation-corrections cancel in the definition of $\vJ^{\msf{sym}(j)}$, we find
\begin{equation}\label{eq:J-p-j-sparse}
\mb{P}[J^{\msf{sym}(j),(p)}_{i_1,\dots,i_p}\neq 0]\ll_{P}C(M2^j)^{-2p}.
\end{equation} 

Next, given $\bsig\in \cS_N$ and $K=10\log(N)\leq \log^2 N$, we can decompose its entries into dyadic scales via $\bsig=\sum_{k=0}^K \bsig^{(k)}$ where for $k<K$:
\[\bsig^{(k)}_i=\bsig_i\cdot \mbm{1}_{|\bsig_i|\in (2^{-k-1} \sqrt{N},2^{-k} \sqrt{N}]}.\] 
Therefore we can expand 
\begin{equation}
\label{eq:expand-p-power}
    \bsig^{\otimes p}=\sum_{k_1,\dots,k_p=0}^K
\bsig^{(k_1)}\otimes\dots\otimes \bsig^{(k_p)}.
\end{equation}
    As this consists of at most $(\log N)^{2p}$ terms, it suffices to fix $k_1,\dots,k_p$ and estimate
    \[
    \big\la 
    \bsig^{(k_1)}\otimes\dots\otimes \bsig^{(k_p)},
    \vJ^{\msf{sym}(j)}
    \big\ra.
    \]
    We restrict to the case $k_1\leq k_2\leq\dots\leq k_p$. 
    Let us fix $k_i$ and note that $|\on{supp}(\sigma^{(i)})|\leq 2^{2k_i}$.
    
    Recalling \eqref{eq:J-p-j-sparse}, Chernoff implies that with probability $1-N^{-\omega(1)}$, each $(p-1)$-tuple $(i_1,\dots,i_{p-1})$ is contained in at most $O_{P}\big(\eps^{(p)}_j N+\log^2 N\big)$ non-zero entries of $\vJ^{\msf{sym}(j),(p)}$.
    Call this event $E_j$.
    
    Let $\cF_j$ denote the sigma-algebra generated by the support of $\vJ^{\msf{sym}(j)}$; note that the signs of the entries of $\vJ^{\msf{sym}(j),(p)}$ are uniformly random conditioned on $\cF_j$. Let $\bsig_i$ denote any vector with at most $2^{2k_i}$ non-zero coordinates, each bounded in size by $2^{-k_i}\sqrt{N}$, and let $S_i = \on{supp}(\bsig_i)$.
    
    On the event $E_j$ we have
    \begin{align*}
    \bbE&\lt[\big\la 
    \bsig_1\otimes\dots\otimes \bsig_p,
   \vJ^{\msf{sym}(j),(p)}
    \big\ra^2~|~\cF_j\rt]\\
    &\qquad\qquad\ll_pN^{p/2}\cdot \sum_{\substack{1\le \ell\le p\\n_\ell\in S_\ell}}
    \frac{\mbm{1}_{(n_1,\dots,n_p)\in \on{supp}(\vJ^{\sym(j),(p)})}}{\prod_{i=1}^p 2^{2k_i}}
    \ll_pN^{p/2}\cdot \bigg(
    \frac{\eps^{(p)}_j N+\log^2 N}{2^{2k_p}}\bigg).
    \end{align*}
    
    Note that each non-zero entry in $\vJ^{\msf{sym}(j),(p)}$, conditional on $\cF_j$, is bounded by $M2^{j}$ and symmetric. Therefore $\sigma_i$ as before, and so $\big\la\bsig_1\otimes\dots\otimes \bsig_p,\vJ^{\scale(j)}\big\ra$ 
    is a sub--Gaussian random variable with variance proxy bounded by 
   \[\ll_P(M2^{j})^2 \cdot N^p\bigg(
    \frac{\eps^{(p)}_j N+\log^2 N}{2^{2k_p}}\bigg)\ll_P N^p\cdot \frac{CN(M2^{j})^{2-2p} + (M2^{j})^2 \log^2 N}{2^{2k_p}}.\]
    
    Let $\mc{N}_{k_i}$ be a $\frac{\sqrt{N}}{10p}$ net of vectors supported on at most $2^{2k_i}$ indices with all entries at most $2^{-k_i}\sqrt{N}$ in absolute value. Furthermore suppose that points in the net have supported on at most $2^{2k_i}$ indices with all entries at most $2^{-k_i}\sqrt{N}$ in absolute value; there exists $\mc{N}_{k_i}$ with size at most $\exp\big(O_p(2^{2k_i}(1+\log(2^{-2k_i}N)))\big)$.
    
    Furthermore, a straightforward argument gives
    \[\sup_{\bsig\in \mc{S}_N}\big|\big\la 
    \bsig^{(k_1)}\otimes\dots\otimes \bsig^{(k_p)},
   \vJ^{\msf{sym}(j),(p)}
    \big\ra\big|\le 2\sup_{\bsig_i\in \mc{N}_{k_i}}\big|\big\la 
    \bsig^{(k_1)}\otimes\dots\otimes \bsig^{(k_p)},
   \vJ^{\msf{sym}(j),(p)}
    \big\ra\big|.
    \]
    Thus union bounding over such nets, we have with probability $1-N^{-\omega(1)}$ (on the event $E_j$) that 
    \begin{align*}
    \sup_{\bsig_i\in \mc{N}_{k_i}}&\big|\big\la 
    \bsig^{(k_1)}\otimes\dots\otimes \bsig^{(k_p)},
   \vJ^{\msf{sym}(j),(p)}
    \big\ra\big|\\
    &\ll_P N^{p/2}\log N \cdot \sqrt{\log\prod_{i=1}^{p}|\mc{N}_{k_i}|}
   \cdot \bigg(\frac{CN(M2^{j})^{2-2p} + (M2^{j})^2 \log^2 N}{2^{2k_p}}\bigg)^{1/2}
    \\
    &\ll_{P} N^{p/2} (\log N)^3 \cdot 
 \max(C(M2^{j})^{1-p} N^{1/2}, M2^{j})\ll_P (\log N)^{3}\cdot N^{\frac{p+1}{2}-\frac{1}{4P}}.
    \end{align*}
Here we use that $M2^{j}\le \eta\sqrt{N}$ and $M2^{j}\ge M = N^{\frac{1}{4P}}$. Recalling \eqref{eq:expand-p-power} and summing over at most $(\log N)^{2P}$ possible sequences $(k_1,\dots,k_p)$ gives the same bound for $\la \bsig^{\otimes p},\vJ^{\msf{sym}(j),(p)}\ra = \la \bsig^{\otimes p},\vJ^{\msf{scale}(j),(p)}\ra$ up to these logarithmic factors.
Finally recalling that $\vJ^{(p)}$ is scaled by $N^{-(p-1)/2}$ in the definition \eqref{eq:HN-def} of $H_N(\bsig)$ completes the proof. 
\end{proof}

The remainder of the analysis will focus on $\vJ^{\msf{small}}$. 
Thanks to our truncations and \cref{lem:approximate-conservation-of-variance}, in the setting of either \cref{thm:main} or \cref{thm:bai-yin-var}, we have for $\wt\delta\leq O_{P,\eps}(\delta)$:
\begin{equation}
\label{eq:truncated-disorder-properties2}
\begin{aligned}
    \|\vJ^{\msf{small}}\|_{\infty}&\leq \max(M,M_1)\le N^{\frac{1}{4P}}\leq N^{1/64},
    \\
    \bbE[\vJ^{\msf{small}}_{i_1,\dots,i_p}]&=0,
    \\
     \frac{p!}{|\{i_1,\ldots,i_p\}|!}
     \bbE[(\vJ^{\msf{small}}_{i_1,\dots,i_p})^2]
    &\in
    \big[1-\wt\delta,1].
\end{aligned}
\end{equation}
(The last bound in the first line is without loss of generality as we may freely increase $P\to P'$ by setting $\gamma_{P+1}=\dots=\gamma_{P'}=0$.)

For technical reasons, we introduce standard Rademacher random variables $R_{i_1,\ldots,i_p}\in \{\pm 1\}$, which are independent of $\vJ$, and jointly independent of each other up to $\mf{S}_p$-symmetry (similarly to \cref{def:dis-bound}).
We choose constants $0\leq c_{i_1,\ldots,i_p}\leq O_P(\wt{\delta}^{1/2})$ such that
\[\frac{p!}{|\{i_1,\ldots,i_p\}|!}
     \bbE[(\vJ^{\msf{mod}})^2] = 1\]
where
\[\vJ^{\msf{mod}}_{i_1,\ldots,i_p} :=\vJ^{\msf{small}}_{i_1,\dots,i_p} + c_{i_1,\ldots,i_p}R_{i_1,\ldots,i_p}.\]
Therefore $\vJ^{\msf{mod}}$ satisfies:
\begin{equation}
\label{eq:truncated-disorder-properties}
\begin{aligned}
    \|\vJ^{\msf{mod}}\|_{\infty}&\leq \oM:=\max(M,M_1) + 1\le 2N^{\frac{1}{4P}}\leq N^{1/64},
    \\
    \bbE[\vJ^{\msf{mod}}_{i_1,\dots,i_p}]&=0,
    \\
     \frac{p!}{|\{i_1,\ldots,i_p\}|!}
     \bbE[(\vJ^{\msf{mod}}_{i_1,\dots,i_p})^2]
    &=1.
\end{aligned}
\end{equation}

The following elementary bound handles the difference between $\vJ^{\msf{mod}}$ and $\vJ^{\msf{small}}$.
\begin{lemma}\label{lem:diff-bound}
Let $\vJ^{\msf{mod}}$ and $\vJ^{\msf{small}}$ be as above. Then for $\beta\in [0,\infty]$ we have that 
\[\big|\mb{E}[F_{\beta}(\vJ^{\msf{mod}})] - \mb{E}[F_{\beta}(\vJ^{\msf{small}})]\big|\ll_P \wt{\delta}^{1/2} N.\]
\end{lemma}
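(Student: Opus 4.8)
Write $\vJ^{\msf{diff}}:=\vJ^{\msf{mod}}-\vJ^{\msf{small}}$, whose entries are $c_{i_1,\ldots,i_p}R_{i_1,\ldots,i_p}$; these are independent up to $\mf{S}_p$-symmetry, depend only on the Rademacher variables (the $c$'s being deterministic), and have per-entry variance $c_{i_1,\ldots,i_p}^2=\frac{|\{i_1,\ldots,i_p\}|!}{p!}-\mb{E}[(\vJ^{\msf{small}}_{i_1,\ldots,i_p})^2]\in\big[0,\wt\delta\cdot\tfrac{|\{i_1,\ldots,i_p\}|!}{p!}\big]$ by \cref{lem:approximate-conservation-of-variance}. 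Since $H_N(\bsig;\cdot)$ is linear in the disorder, $H_N(\bsig;\vJ^{\msf{mod}})=H_N(\bsig;\vJ^{\msf{small}})+H_N(\bsig;\vJ^{\msf{diff}})$ pointwise, and since $\vJ\mapsto F_\beta(\vJ)$ is $1$-Lipschitz with respect to the sup-norm $\snorm{H_N(\cdot;\vJ)}_\infty$, uniformly over $\beta\in[0,\infty]$ (because $\log Z_\beta(\vJ)=\log\int e^{\beta H_N(\bsig;\vJ)}\de\bsig$ changes by at most $\beta\snorm{\Delta H_N}_\infty$ under a perturbation $\Delta H_N$, with the obvious limits at $\beta\in\{0,\infty\}$), we obtain the deterministic bound $|F_\beta(\vJ^{\msf{mod}})-F_\beta(\vJ^{\msf{small}})|\le\snorm{H_N(\cdot;\vJ^{\msf{diff}})}_\infty=\oGS(\vJ^{\msf{diff}})$. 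Taking expectations, it suffices to prove $\mb{E}[\oGS(\vJ^{\msf{diff}})]\ll_P\wt\delta^{1/2}N$.

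Next I would randomize to Gaussian disorder. Writing $H_N(\bsig;\vJ^{\msf{diff}})=\sum_{\alpha}R_\alpha v_\alpha(\bsig)$, where $\alpha$ runs over ordered index tuples $i_1\le\cdots\le i_p$ (for which the $R_\alpha$ are genuinely independent) and $v_\alpha\in C(\cS_N)$ is the corresponding deterministic coefficient function, the classical comparison of Rademacher and Gaussian averages in the Banach space $(C(\cS_N),\snorm{\cdot}_\infty)$ — namely $\mb{E}\snorm{\sum_\alpha R_\alpha v_\alpha}\le\sqrt{\pi/2}\,\mb{E}\snorm{\sum_\alpha g_\alpha v_\alpha}$, proved by conditioning on the $R_\alpha$ and the $|g_\alpha|$ and applying Jensen — gives $\mb{E}[\oGS(\vJ^{\msf{diff}})]\le\sqrt{\pi/2}\,\mb{E}[\oGS(\vG^{\msf{diff}})]$, where $\vG^{\msf{diff}}$ denotes Gaussian disorder with the same per-entry variances and $\mf{S}_p$-symmetry. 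Expressing both $H_N(\cdot;\vG^{\msf{diff}})$ and the standard Gaussian Hamiltonian $H_N(\cdot;\vG)$ in the forms $\sum_\alpha\lambda_\alpha g_\alpha u_\alpha(\cdot)$ and $\sum_\alpha g_\alpha u_\alpha(\cdot)$ respectively (same i.i.d.\ standard Gaussians $g_\alpha$, same fixed $u_\alpha\in C(\cS_N)$), the variance bound on the $c$'s becomes $0\le\lambda_\alpha\le\wt\delta^{1/2}$, so for all $\bsig,\bsig'\in\cS_N$,
\[
\mb{E}\big[(H_N(\bsig;\vG^{\msf{diff}})-H_N(\bsig';\vG^{\msf{diff}}))^2\big]=\sum_\alpha\lambda_\alpha^2\,(u_\alpha(\bsig)-u_\alpha(\bsig'))^2\le\wt\delta\,\mb{E}\big[(H_N(\bsig;\vG)-H_N(\bsig';\vG))^2\big].
\]

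This is precisely the hypothesis of the Sudakov--Fernique inequality. Using the pointwise bound $\oGS(\vG^{\msf{diff}})=\sup_\bsig|H_N(\bsig;\vG^{\msf{diff}})|\le|H_N(\bsig_0;\vG^{\msf{diff}})|+\sup_\bsig\big(H_N(\bsig;\vG^{\msf{diff}})-H_N(\bsig_0;\vG^{\msf{diff}})\big)+\sup_\bsig\big(H_N(\bsig_0;\vG^{\msf{diff}})-H_N(\bsig;\vG^{\msf{diff}})\big)$ for a fixed $\bsig_0$ (each summand nonnegative), and applying Sudakov--Fernique to each one-sided supremum — comparing $H_N(\cdot;\vG^{\msf{diff}})$ recentered at $\bsig_0$ with $\wt\delta^{1/2}H_N(\cdot;\vG)$ recentered at $\bsig_0$, in both sign orientations — bounds the last two expectations by $\wt\delta^{1/2}\big(\mb{E}[\GS(\vG)]+\mb{E}[\GS(-\vG)]\big)$. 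Since $\mb{E}[\GS(\vG)]=\mb{E}[\GS(-\vG)]\ll_P N$ (by \cref{thm:parisi}, or by an elementary net bound) and $\mb{E}|H_N(\bsig_0;\vG^{\msf{diff}})|\le(\mb{E}[H_N(\bsig_0;\vG^{\msf{diff}})^2])^{1/2}\le(\wt\delta N\xi(1))^{1/2}\ll_P\wt\delta^{1/2}\sqrt{N}$, we conclude $\mb{E}[\oGS(\vG^{\msf{diff}})]\ll_P\wt\delta^{1/2}N$, hence the lemma. The crucial step — and the reason the Rademacher variables $R_{i_1,\ldots,i_p}$ were built into $\vJ^{\msf{mod}}$ — is the randomization reducing to Gaussian disorder: a direct net or chaining argument on the bounded disorder $\vJ^{\msf{diff}}$ would lose logarithmic factors in $N$, whereas the Gaussian comparison delivers the exact $O(N)$ order of $\mb{E}[\GS(\vG)]$ needed here.
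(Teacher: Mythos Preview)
Your proof is correct and follows essentially the same route as the paper. The paper phrases the Rademacher-to-Gaussian step as a coupling with $\mb{E}[\wt G\mid\vJ^{\msf{diff}}]=\vJ^{\msf{diff}}$ followed by Jensen (which is exactly the proof of the comparison inequality you quote), and then invokes Slepian's lemma where you invoke Sudakov--Fernique; since the pointwise variances are unequal here, your naming is in fact the more accurate one, and your explicit handling of $\oGS$ via recentering at $\bsig_0$ is slightly more careful than the paper's one-line appeal.
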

\begin{proof}
We have that 
\[\big|\mb{E}[F_{\beta}(\vJ^{\msf{mod}})] - \mb{E}[F_{\beta}(\vJ^{\msf{mod}})]\big|\le \mb{E}[\oGS(\vJ^{\msf{mod}} - \vJ^{\msf{small}})].\]
The entries of $\vJ^{\msf{mod}} - \vJ^{\msf{small}}$ are mean zero and bounded by $O_{P}(\wt\delta^{1/2})$. One may couple the coordinates of $(\vJ^{\msf{mod}} - \vJ^{\msf{small}})_{i_1,\ldots,i_p}$ with independent (up to symmetry) Gaussians $\wt{G}_{i_1,\ldots,i_p}$ such that $\mb{E}[\wt{G}_{i_1,\ldots,i_p}|\vJ^{\msf{mod}} - \vJ^{\msf{small}}] = (\vJ^{\msf{mod}} - \vJ^{\msf{small}})_{i_1,\ldots,i_p}$. Then by Jensen's inequality, 
\[\mb{E}[\oGS(\vJ^{\msf{mod}} - \vJ^{\msf{small}})]\le \mb{E}[\oGS(\vec{\wt{G}})].\]
As the entries of $\wt{G}$ are Gaussians of variance bounded by $O_P(\wt{\delta})$, Slepian's lemma implies
\[
\mb{E}[\oGS(\vec{\wt{G}})]\ll_{P} \wt{\delta}^{1/2} \cdot \mb{E}[\oGS(\vec{\wt{G}})]\ll_{P} \wt{\delta}^{1/2} N.
\]
In the final line we have used $\mb{E}[\oGS(\vec{\wt{G}})]\ll_{P} N$ which follows by e.g. using Slepian to compare to the setting of \cref{thm:parisi} (or by a direct Dudley entropy integral). 
\end{proof}

In \cref{sec:decomp} to \cref{sec:bound-dis}, we study disorder $\vJ$ which is assumed to obey the properties \eqref{eq:truncated-disorder-properties}.
In \cref{sec:finish} we will apply these results to $\vJ^{\msf{mod}}$ to conclude.

\section{Decomposition of the sphere}\label{sec:decomp}
We next decompose the uniform measure $\mu_N$ on $\cS_N$ into subexponentially many smaller measures. On the support of each smaller measures, the large coordinates will be essentially constant. 

Fix a parameter $T\in [\log N,N^{1/4}]$ (we will eventually take $T$ to be a small power of $N$). We first show that nets controlling only the large coordinates of $x\in \mc{S}_N$ have subexponential size. 
\begin{fact}\label{fct:small-net}
Fix $\log N\le T\le \sqrt{N}$ and let 
\[
\mc{L}_T = \big((-\infty,-T]\cup \{0\}\cup [T,\infty)\big)^N \cap 
\cB_N.
\]
There exists a constant $C = C_{\ref{fct:small-net}}>0$ such that for any $\iota>0$, $\mc{L}_T$ admits a $\iota \sqrt{N}$ net $\mc{N}_{T,\iota}\subseteq \mc{L}_T$ of size $|\mc{N}_{T,\iota}|\leq (CN/\iota)^{N/T^2}$.
\end{fact}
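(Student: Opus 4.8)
The plan is to build the net $\mc{N}_{T,\iota}$ by first choosing the support pattern of the large coordinates, then rounding their values on a grid. The key structural observation is that any $x\in \mc{L}_T$ has at most $N/T^2$ coordinates with $|x_i|\geq T$, since $\|x\|_2^2\leq N$ forces $|\{i : |x_i|\geq T\}|\cdot T^2\leq N$. So the "large support" $S=\supp(x)$ ranges over subsets of $[N]$ of size at most $k:=\lfloor N/T^2\rfloor$, and there are $\sum_{j\le k}\binom{N}{j}\leq (eN/k)^{k}\leq (CN)^{N/T^2}$ choices for some absolute constant $C$ (here one uses $T\geq \log N$ so $k\geq 1$; the case $k=0$ is trivial since then $\mc{L}_T=\{0\}$).

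Next I would fix the support $S$ with $|S|=j\leq k$ and net the allowed values. Each coordinate $x_i$ with $i\in S$ satisfies $T\leq |x_i|\leq \sqrt{N}$, so it lies in an interval of length at most $2\sqrt{N}$. Rounding each such coordinate to the nearest multiple of $\iota\sqrt{N}/\sqrt{j}$ (staying within $\mc{L}_T$ by rounding away from $0$ so as not to leave the region $|x_i|\ge T$, using $T\geq \log N \gg \iota\sqrt{N}/\sqrt{j}$ once $\iota$ is not too large — and if $\iota$ is large the bound is vacuous) incurs per-coordinate error at most $\iota\sqrt{N}/\sqrt{j}$, hence total $\ell^2$ error at most $\sqrt{j}\cdot \iota\sqrt{N}/\sqrt{j}=\iota\sqrt{N}$. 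The number of grid points per coordinate is at most $O(\sqrt{N}/(\iota\sqrt{N}/\sqrt{j}))=O(\sqrt{j}/\iota)\leq O(\sqrt{N}/\iota)$, so the number of rounded value-vectors on a fixed support of size $j$ is at most $(O(\sqrt{N}/\iota))^{j}\leq (CN/\iota)^{N/T^2}$ after enlarging $C$.

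Combining, $|\mc{N}_{T,\iota}|\leq (CN)^{N/T^2}\cdot (CN/\iota)^{N/T^2}\leq (CN/\iota)^{N/T^2}$ after absorbing constants (using that the exponent $N/T^2$ is the same for both factors and $\iota$ only appears in one). The only mild subtlety — the "main obstacle" such as it is — is handling the boundary constraint that rounded points must remain in $\mc{L}_T$, i.e. that rounding cannot push a coordinate out of $(-\infty,-T]\cup\{0\}\cup[T,\infty)$ and that the rounded vector still has $\ell^2$ norm $\le\sqrt N$; both are arranged by always rounding away from zero and shrinking $\iota$ slightly (or noting the claim is vacuous when $\iota\sqrt N\gtrsim T$, since then $\mc{N}_{T,\iota}=\{0\}$ already suffices for points with no large coordinates and a single coordinate pattern otherwise). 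This is a routine packing/covering estimate with no real difficulty beyond bookkeeping the constants.
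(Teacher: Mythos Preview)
Your proposal is correct and follows essentially the same approach as the paper: first observe that any $x\in\mc{L}_T$ has at most $N/T^2$ nonzero coordinates, then union over all possible supports and net on each fixed support. The paper's proof is terser---it invokes the standard fact that a ball in $L$ dimensions has an $\iota$-net of size $(C/\iota)^L$ as a black box, whereas you give an explicit coordinate-grid construction---but the decomposition and the arithmetic are the same. You are in fact more careful than the paper about the constraint $\mc{N}_{T,\iota}\subseteq\mc{L}_T$; the paper's two-line argument does not address this at all, and the cleanest way to handle it (avoiding your rounding-direction bookkeeping) is simply to take a maximal $\iota\sqrt{N}$-separated subset of $\mc{L}_T$ restricted to each support, whose size is bounded by the same packing argument.
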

\begin{proof}
It is standard that the unit sphere in $L$-dimensions has a $\iota$-net of size $(C/\iota)^{L}$. Note that any point in $\cS_N$ has at most $N/T^2$ coordinates larger than $T$. Unioning over all subsets of $[N]$ with cardinality at most $N/T^2$ gives the desired result since $\binom{N}{N/T^2}\le N^{N/T^2}$.
\end{proof}

For the remainder of the proof, we fix $\iota = N^{-16P}$ and $\mc{N}_{T,\iota}$ from \cref{fct:small-net}. Given $x\in \mc{S}_N$, define:
\begin{align*}
x_{T} &= \sum_{i\in [N]}\mbm{1}_{|x_i|\ge T}x_ie_i.
\end{align*}
By construction, $x_T\in \mc{L}_T$. 
For each $w\in\mc{N}_{T,\iota}$, let
\begin{align*}
\mc{C}_w = \{x\in \mc{S}_N: w \in \argmin_{w'\in \mc{N}_{T,\iota}}\snorm{x_T-w'}_2\}.
\end{align*}
Then clearly $\mc{S}_N = \bigcup_{w\in \mc{N}_{T,\iota}}\mc{C}_w$.
We also define 
\[x_w = \sqrt{N-\snorm{w}_2^2}\cdot\frac{x-x_T}{\snorm{x-x_{T}}_2}.
\]
If $x\in \mc{C}_w$ is uniformly random, we let $\mu_w$ be the law of $x_w$.
We let $\mc{S}_w$ be the sphere of radius $\sqrt{N-\snorm{w}_2^2}$ on the coordinates $[N]\setminus \on{supp}(w)$, with all zero entries in $\supp(w)$.
Let $\mu_w^{\ast}$ be the uniform probability measure on $\mc{S}_w$.

\begin{lemma}\label{lem:rnd-point}
Fix $\log N\le T\le N^{1/4}$ and let $x\in \cS_N$ and $w\in\cN_{T,\iota}$.
Then the following hold:
\begin{enumerate}[label=(\alph*)]
    \item 
    \label{it:rnd-point-1}
    $\snorm{x_w + w}_2 = \sqrt{N}$
    \item 
    \label{it:rnd-point-2}
    $\snorm{x_w + w - x}_2\le 4\iota^{1/4} N$
    \item 
    \label{it:rnd-point-3}
    $\snorm{x_w}_{\infty}\le 2T$
    \item 
    \label{it:rnd-point-4}
    For all $y\in \mc{S}_w$, we have
    $\frac{d\mu_w}{d\mu^{\ast}_w}(y)\le 1 + N^{-\omega(1)}$.
\end{enumerate}
\end{lemma}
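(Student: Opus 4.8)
Throughout we take $x\in\mc{C}_w$, and write $S=\{i\in[N]:|x_i|\geq T\}$, $S^c=[N]\setminus S$, with $x|_S,x|_{S^c}$ the restrictions of $x$ to $S$ and $S^c$ (extended by zero); thus $x_T=x|_S$, $x-x_T=x|_{S^c}$, and $x_w=\rho_w\, x|_{S^c}/\snorm{x|_{S^c}}_2$ where $\rho_w:=\sqrt{N-\snorm{w}_2^2}$. The plan is to first extract the one structural fact that makes the rest routine: $\supp(w)=S$. Indeed $\mc{N}_{T,\iota}$ is an $\iota\sqrt N$-net of $\mc{L}_T\ni x_T$, so $\snorm{x_T-w}_2\leq\iota\sqrt N=N^{1/2-16P}$; but every nonzero coordinate of $w$ or of $x_T$ has modulus $\geq T\geq\log N$, which exceeds $\iota\sqrt N$ for large $N$, so $w$ and $x_T$ vanish on exactly the same coordinates. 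Then $x_w$ is supported on $S^c$ and $w$ on $S$, so $\snorm{x_w+w}_2^2=\snorm{x_w}_2^2+\snorm{w}_2^2=\rho_w^2+\snorm{w}_2^2=N$, which is \ref{it:rnd-point-1}; moreover $x_w\in\mc{S}_w$, so $\mu_w$ is supported on $\mc{S}_w$ and \ref{it:rnd-point-4} is meaningful.

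For \ref{it:rnd-point-2} and \ref{it:rnd-point-3} one simply computes. Since $x=x_T+x|_{S^c}$ with disjoint supports, $N=\snorm{x_T}_2^2+\snorm{x|_{S^c}}_2^2$; combined with $\big|\snorm{x_T}_2-\snorm{w}_2\big|\leq\iota\sqrt N$ this gives $\big|\rho_w^2-\snorm{x|_{S^c}}_2^2\big|=\big|\snorm{w}_2^2-\snorm{x_T}_2^2\big|\leq 2\iota N$. Writing $x_w+w-x=(w-x_T)+\big(\tfrac{\rho_w}{\snorm{x|_{S^c}}_2}-1\big)x|_{S^c}$, the second summand has norm $\big|\rho_w-\snorm{x|_{S^c}}_2\big|$, which is $\leq 2\iota N/\sqrt{\iota N}=2\sqrt{\iota N}$ if $\rho_w+\snorm{x|_{S^c}}_2>\sqrt{\iota N}$ and $\leq\rho_w+\snorm{x|_{S^c}}_2\leq\sqrt{\iota N}$ otherwise; together with $\snorm{w-x_T}_2\leq\iota\sqrt N$ this proves \ref{it:rnd-point-2} (one lands well below $4\iota^{1/4}N$). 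For \ref{it:rnd-point-3}, $\snorm{x|_{S^c}}_{\infty}<T$, so it is enough to bound the factor $\rho_w/\snorm{x|_{S^c}}_2$: if $\snorm{x|_{S^c}}_2^2>2\iota N$ this factor is $<\sqrt2$ (as $\big|\rho_w^2-\snorm{x|_{S^c}}_2^2\big|\leq 2\iota N<\snorm{x|_{S^c}}_2^2$), so $\snorm{x_w}_{\infty}<\sqrt2\,T<2T$; and if $\snorm{x|_{S^c}}_2^2\leq 2\iota N$ then $\rho_w^2\leq\snorm{x|_{S^c}}_2^2+2\iota N\leq 4\iota N$, so $\snorm{x_w}_{\infty}\leq\rho_w\leq 2\sqrt{\iota N}\ll\log N\leq T$.

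The substance is \ref{it:rnd-point-4}. Realize a uniform $x\in\cS_N$ as $\sqrt N\,g/\snorm{g}_2$ with $g$ a standard Gaussian vector in $\mb{R}^N$, so that $x|_S=\sqrt{Na/(a+b)}\,u$ and $x|_{S^c}=\sqrt{Nb/(a+b)}\,v$, where $a=\snorm{g|_S}_2^2$, $b=\snorm{g|_{S^c}}_2^2$, $u$ is uniform on the unit sphere of $\mb{R}^S$, $v$ is uniform on the unit sphere of $\mb{R}^{S^c}$, and $a,b,u,v$ are jointly independent. Using $\supp(w)=S$, the event $\{x\in\mc{C}_w\}$ factors as $G\cap\{\snorm{v}_{\infty}<\tau(a,b)\}$, where $G$ is a function of $(a,b,u)$ only (every coordinate of $x|_S$ has modulus $\geq T$, and $w$ is a nearest net point to $x|_S$) and $\tau(a,b):=T\sqrt{(a+b)/(Nb)}$ arises from the constraint that every coordinate of $x|_{S^c}$ has modulus $<T$. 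Since $x_w=\rho_w v$ and $v\mapsto\rho_w v$ sends the uniform law on the unit sphere of $\mb{R}^{S^c}$ to $\mu_w^{\ast}$, it suffices to bound the density of $\Law(v\mid x\in\mc{C}_w)$ against the uniform law. Conditioning on $(a,b)$ and using $v\indep(a,b,u)$, with $\rho(a,b):=\mb{P}_u[G\mid a,b]$ and $q(a,b):=\mb{P}_v[\snorm{v}_{\infty}<\tau(a,b)]$, for every Borel $B$ on the unit sphere of $\mb{R}^{S^c}$,
\[
\mb{P}\big[v\in B\mid x\in\mc{C}_w\big]
=\frac{\mb{E}_{a,b}\big[\rho(a,b)\,\mb{P}_v[v\in B,\,\snorm{v}_{\infty}<\tau(a,b)]\big]}{\mb{E}_{a,b}\big[\rho(a,b)\,q(a,b)\big]}.
\]

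The numerator is at most $\mathrm{Unif}(B)\,\mb{E}_{a,b}[\rho(a,b)]$, trivially. For the denominator: whenever $\rho(a,b)>0$ there is a feasible $u$, and then $\snorm{x|_S-w}_2\leq\iota\sqrt N$ forces $Nb/(a+b)=N-\snorm{x|_S}_2^2\leq N-\snorm{w}_2^2+2\iota N\leq 2N$, hence $\tau(a,b)\geq T/\sqrt{2N}$; and a uniform $v$ on a sphere of dimension $|S^c|\geq N/2$ satisfies $\snorm{v}_{\infty}<T/\sqrt{2N}$ with probability $1-N^{-\omega(1)}$ (a single-coordinate sub-Gaussian tail bound plus a union bound over at most $N$ coordinates, using $T\geq\log N$). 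Thus $q(a,b)\geq 1-N^{-\omega(1)}$ wherever $\rho(a,b)>0$, the denominator is $\geq(1-N^{-\omega(1)})\,\mb{E}_{a,b}[\rho(a,b)]$, and the ratio is $\leq(1+N^{-\omega(1)})\mathrm{Unif}(B)$, which is \ref{it:rnd-point-4} (and if $\mb{E}_{a,b}[\rho(a,b)]=0$ then $\mb{P}[x\in\mc{C}_w]=0$ and there is nothing to prove). The step I expect to be the main obstacle is precisely this last one: the requirement that the small coordinates of $x$ stay below $T$ genuinely couples the direction $v$ with the radial data $(a,b)$, so rotational invariance cannot be invoked directly; the resolution is that on the part of the probability space where $G$ is feasible this constraint is binding only with probability $N^{-\omega(1)}$, which is exactly what the choices $\iota=N^{-16P}$ and $T\in[\log N,N^{1/4}]$ provide.
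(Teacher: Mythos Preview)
Your proof is correct and follows essentially the same route as the paper: the structural fact $\supp(w)=S$ makes (a)--(c) routine via the same case analyses, and for (d) your Gaussian realization with independence of $(a,b,u,v)$ is just an explicit version of the paper's ``condition on $x_T$'' argument, both resting on the same key point that the constraint $\snorm{v}_\infty<\tau(a,b)$ fails only with probability $N^{-\omega(1)}$. One minor remark: in (d) your detour through $\rho(a,b)>0\Rightarrow Nb/(a+b)\le 2N$ is unnecessary, since $Nb/(a+b)\le N$ holds trivially and already gives $\tau(a,b)\ge T/\sqrt N$, which suffices.
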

\begin{proof}
Point~\ref{it:rnd-point-1} is immediate because $\snorm{x_w}_2 = \sqrt{N-\snorm{w}_2^2}$ and $\on{supp}(x_w)\cap \on{supp}(w) = \emptyset$. 

For \ref{it:rnd-point-2}, note that 
\begin{align*}
\snorm{x_w + w - x}_2 &= \snorm{x_w + w - x_{T}-(x-x_T)}_2\le \snorm{x_w -(x-x_{T})}_2 + \snorm{w - x_{T}}_2\\
&\le 2\iota\sqrt{N} + \bigg|\sqrt{N-\snorm{w}_2^2}-\snorm{(x-x_{T})}_2\bigg|\,.
\end{align*}
If $\snorm{x-x_T}_2\le \iota^{1/2} \sqrt{N}$, then $\snorm{w}_2 \ge \snorm{x_T}_2 - \iota\sqrt{N}\ge (1-2\iota^{1/2})\sqrt{N}$. In this case, 
\[\Big|\sqrt{N-\snorm{w}_2^2}-\snorm{(x-x_{T})}_2\Big|\le (2\iota^{1/2} N)^{1/2} + \iota^{1/2} \sqrt{N}\le 2\iota^{1/4}\sqrt{N}.\]
Else $\snorm{x-x_T}_2\ge \iota^{1/2} \sqrt{N}$ and so
\begin{align*}
\Big|\sqrt{N-\snorm{w}_2^2}-\snorm{(x-x_{T})}_2\Big| &= \frac{(N-\snorm{w}_2^2) - \snorm{(x-x_{T})}_2^2}{\sqrt{N-\snorm{w}_2^2} + \snorm{(x-x_{T})}_2}\\
&\le \frac{\snorm{w}_2^2 - \snorm{x_T}_2^2}{\snorm{(x-x_{T})}_2}\le \frac{2\sqrt{N}(\iota\sqrt{N})}{\iota^{1/2}\sqrt{N}}\le 2\iota^{1/2}\sqrt{N}.
\end{align*}

For \ref{it:rnd-point-3}, note that 
\[\snorm{x_w}_\infty \le \min\bigg(\frac{\sqrt{N-\snorm{w}_2^2} \, T}{\snorm{x-x_T}_2}, \sqrt{N-\snorm{w}_2^2}\bigg).\]
Now $\snorm{x-x_T}_2 = \sqrt{N - \snorm{x_T}_2^2}$. If $\snorm{w}_2\ge \sqrt{N-1}$, the second term implies $\snorm{x_w}_{\infty}\le 1$ as desired. Otherwise $\big|\snorm{w}_2-\snorm{x_T}_2\big|\le \iota\sqrt{N}$ and thus $\frac{\sqrt{N-\snorm{w}_2^2}}{\snorm{x-x_T}_2}\le 2$. This implies \ref{it:rnd-point-3}. 

For \ref{it:rnd-point-4} we fixed $w$ and condition on $x_T$. 
This determines whether $x\in \mc{C}_w$, and we prove the result with $\mu_w$ replaced by the conditional law of $x_w$. This suffices by averaging over $x_T$ at the end. 
Under the conditioning, $(x-x_T)$ is uniformly random on the sphere of radius $\sqrt{N-\sqrt{\|x_T\|_2}}$ in coordinates $[N]\setminus \on{supp}(w)$, except that $\snorm{x-x_T}_{\infty}\le T$. Since $T\ge \log N$, we find that the portion of the sphere of radius $\sqrt{N-\sqrt{\|x_T\|_2}}$ with $\snorm{\cdot}_{\infty}\le \log N$ is an $1-N^{-\omega(1)}$ fraction of the sphere. Then \ref{it:rnd-point-4} follows because $x_w$ is a rescaling of $(x-x_T)$ by a factor depending only on $x_T$. 
\end{proof}

By construction $\mu_N$ is approximated by a convex combination of $w+\mu_w$, the pushforward of $\mu_w$ under $x\mapsto w+x$.
Using \cref{lem:rnd-point}, we transfer this approximation to free energies.

\begin{lemma}\label{lem:meas-decomp}
For any $\vJ=\big(\vJ^{(1)},\dots,\vJ^{(P)}\big)$,
\[\bigg|F_{\beta}(\vec{J})-\frac{1}{\beta}\log\Big(\sum_{w\in \mc{N}_{T,\iota}}\mu_N(\cC_w) \cdot Z_{\beta}(\vec{J},w + \mu_w)\Big)\bigg|\le \snorm{J}_{\infty}\cdot N^{-2P}.\]
\end{lemma}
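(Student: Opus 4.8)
The plan is to compare the partition function $Z_\beta(\vJ)=\int_{\cS_N}\exp(\beta H_N(\bsig))\,\de\mu_N(\bsig)$ with the weighted sum $\sum_{w}\mu_N(\cC_w)Z_\beta(\vJ,w+\mu_w)$ by exhibiting a measure-preserving correspondence between the two and controlling the Hamiltonian along it. Since $\{\cC_w\}_{w\in\mc{N}_{T,\iota}}$ covers $\cS_N$ (with overlaps of measure zero after breaking ties in the $\argmin$), we may write $\mu_N=\sum_w \mu_N(\cC_w)\,\nu_w$ where $\nu_w$ is $\mu_N$ conditioned on $\cC_w$. Thus
\[
Z_\beta(\vJ)=\sum_{w\in\mc{N}_{T,\iota}}\mu_N(\cC_w)\int_{\cC_w}\exp(\beta H_N(\bsig))\,\de\nu_w(\bsig),
\]
and the claim reduces to showing that for each $w$, the integral $\int_{\cC_w}\exp(\beta H_N(\bsig))\,\de\nu_w(\bsig)$ is within a multiplicative factor $\exp\big(\pm \beta\snorm{J}_\infty N^{-2P}/2\big)$ (say) of $Z_\beta(\vJ,w+\mu_w)=\int \exp(\beta H_N(w+x))\,\de\mu_w(x)$. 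Here I use that the map $x\mapsto x_w$ pushes $\nu_w$ forward to $\mu_w$ (this is exactly how $\mu_w$ was defined, as the law of $x_w$ when $x\sim\nu_w$), so the two integrals are over the same measure and it suffices to compare the integrands $H_N(\bsig)$ and $H_N(w+x_w)$ pointwise.

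The key estimate is therefore a Lipschitz-type bound: by \cref{lem:rnd-point}\ref{it:rnd-point-2}, $\snorm{(w+x_w)-\bsig}_2\le 4\iota^{1/4}N$. Since $H_N$ is a degree-$\le P$ polynomial whose coefficients are scaled by $\gamma_p/N^{(p-1)/2}$ and which, on the ball of radius $O(\sqrt N)$ containing both points, has gradient bounded (coordinatewise expansion of $\partial_{\sigma_i}H_N$ gives a sum over $(p-1)$-tuples of entries $J_{i,\dots}$, each at most $\snorm{J}_\infty$), one gets
\[
|H_N(\bsig)-H_N(w+x_w)|\le \snorm{J}_\infty\cdot \on{poly}(N)\cdot \snorm{\bsig-(w+x_w)}_2\le \snorm{J}_\infty\cdot \on{poly}(N)\cdot \iota^{1/4}N.
\]
With $\iota=N^{-16P}$ this is $\snorm{J}_\infty\cdot N^{-4P+O(1)}\ll \snorm{J}_\infty N^{-2P}/\beta$ after absorbing the $1/\beta$ in the definition of $F_\beta$ (the $\beta\to\infty$ case being the cleaner one, and for small $\beta$ one should instead compare $F_\beta$ directly, noting $|F_\beta(\vJ)-F_\beta(\vJ,\text{sum})|\le \sup|H_N(\bsig)-H_N(w+x_w)|$ uniformly, which circumvents the $1/\beta$ blowup entirely). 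One must also account for the Radon–Nikodym discrepancy between $\mu_w$ and the genuine pushforward: \cref{lem:rnd-point}\ref{it:rnd-point-4} is not needed here since $\mu_w$ \emph{is} the pushforward by construction — it only enters later when comparing to $\mu_w^\ast$.

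Assembling: $\big|\beta F_\beta(\vJ)-\log\sum_w\mu_N(\cC_w)Z_\beta(\vJ,w+\mu_w)\big|\le \beta\sup_{w}\sup_{x}|H_N(\bsig)-H_N(w+x_w)|\le \beta\snorm{J}_\infty N^{-2P}$, using that $\log$ of a convex combination of quantities each within $e^{\pm t}$ of the target sits within $\pm t$ of the target. The main obstacle is making the gradient bound on $H_N$ explicit and uniform over the segment between $\bsig$ and $w+x_w$ — one needs that this entire segment stays inside a ball of radius $O(\sqrt N)$ (true: both endpoints have norm exactly $\sqrt N$ by \ref{it:rnd-point-1}, so the segment lies in $\cB_N$) and that the polynomial $H_N^{(p)}$ has operator-norm-type bound $\snorm{\nabla H_N^{(p)}}\le \gamma_p N^{-(p-1)/2}\cdot p\cdot \snorm{J^{(p)}}_\infty N^{(p-1)/2}\cdot O(1)=O(\snorm{J}_\infty)$ on that ball, the $N$ powers from differentiating $N^{p-1}$ terms being exactly cancelled by the normalization — so the losing factor is genuinely only polynomial and is crushed by $\iota^{1/4}$. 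A secondary point is handling $\beta$ near $0$ and $\beta=\infty$, which as noted is cleanest by bounding the free-energy difference directly rather than through the partition function.
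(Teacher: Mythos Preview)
Your approach is correct and essentially the same as the paper's: both decompose $\mu_N$ over the $\cC_w$, identify $\mu_w$ as the pushforward of the conditional law under $\bsig\mapsto x_w$, and bound $|H_N(\bsig)-H_N(w+x_w)|$ pointwise via the distance estimate \cref{lem:rnd-point}\ref{it:rnd-point-2} together with a tensor Lipschitz bound (the paper splits the replacement $\bsig\to w+x_w$ into the two sub-steps $\bsig_T\to w$ and $\bsig-\bsig_T\to \bsig_w$, but this is cosmetic). One minor correction: your explicit claim $\snorm{\nabla H_N^{(p)}}=O(\snorm{J}_\infty)$ near the end is off --- the normalization $N^{-(p-1)/2}$ does not cancel the $N^{p-1}$ terms appearing in each coordinate of the gradient, and the correct bound on $\cB_N$ is $O_P(\snorm{J}_\infty N^{P/2})$ --- but your earlier ``$\mathrm{poly}(N)$'' formulation is what the argument actually uses, and $N^{P/2}\cdot\iota^{1/4}N=N^{1+P/2-4P}\le N^{-2P}$ for all $P\ge 1$, so the proof goes through.
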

\begin{proof}
For $x,y\in \mb{R}^N$, we have 
\[\snorm{x^{\otimes p} - y^{\otimes p}}_2\le p\cdot \max(\snorm{x}_2,\snorm{y}_2)^{p-1}\snorm{x-y}_2\le p \cdot (\snorm{x}_2 + \snorm{y}_2)^{p-1}\cdot \snorm{x-y}_2.\]
With $\mb{E}_{\bsig\in \mc{C}_w}$ indicating that $\bsig\in \mc{C}_w$ is uniformly random, $F_{\beta}(\vec{J})$ is approximated by:
\begin{align*}
F_{\beta}(\vec{J}) &= \frac{1}{\beta}\log\Big(\sum_{w\in \mc{N}_{T,\iota}}\mu_N(\cC_w) \cdot \mb{E}_{\bsig\in \mc{C}_w}\big[\exp\big(\beta\sang{\vec{J},\bsig^{\otimes p}})\big]\Big)\\
&= \frac{1}{\beta}\log\Big(\sum_{w\in \mc{N}_{T,\iota}}\mu_N(\cC_w) \cdot \mb{E}_{\bsig\in \mc{C}_w}\big[\exp\big(\beta\sang{\vec{J},(\bsig_T + (\bsig-\bsig_T))^{\otimes p}})\big]\Big)\\
&= \frac{1}{\beta}\log\Big(\sum_{w\in \mc{N}_{T,\iota}}\mu_N(\cC_w) \cdot \mb{E}_{\bsig\in \mc{C}_w}\big[\exp\big(\beta\sang{\vec{J},(w + (\bsig-\bsig_T))^{\otimes p}})\big]\Big) \pm \snorm{\vec{J}}_{\infty}\cdot N^{-3P}\\
&= \frac{1}{\beta}\log\Big(\sum_{w\in \mc{N}_{T,\iota}}\mu_N(\cC_w) \cdot \mb{E}_{\bsig\in \mc{C}_w}\big[\exp\big(\beta\sang{\vec{J},(w + \bsig_w)^{\otimes p}})\big]\Big) \pm 2\snorm{\vec{J}}_{\infty}\cdot N^{-3P}\\
&= \frac{1}{\beta}\log\Big(\sum_{w\in \mc{N}_{T,\iota}}\mu_N(\cC_w) \cdot Z_{\beta}(\vec{J},w + \mu_w)\Big) \pm \snorm{\vec{J}}_{\infty}\cdot N^{-2P}. \qedhere
\end{align*}
\end{proof}

\section{Lindberg Exchange and Concentration for Bounded Disorder}\label{sec:add-input}
In this section, we provide the necessary Lindeberg exchange argument and concentration of free energy for bounded disorder. As previously mentioned, the former is the key ingredient for the Ising case \cite{Tal02,CH06,Cha05}. The latter is a consequence of Talagrand's concentration inequality for convex functions of bounded independent variables (and of Gaussian isoperimetry in the Gaussian case).

\subsection{Lindeberg exchange argument}\label{sec:exch}
In this we include a Lindberg exchange argument which allows us to deduce universality of free energy under appropriate boundedness constraints on the entries. Such computations appear essentially in works of Talagrand \cite{Tal02} and Carmona and Hu \cite[Section~2]{CH06} but are presented as interpolations. We phrase these arguments in the flavor of the more traditional Lindeberg exchange method, following Chatterjee \cite{Cha05}.

Given sequences $a = (a_1,\ldots,a_n)\in \mb{R}^{n}$ and $b = (b_1,\ldots,b_n)\in \mb{R}^n$, and $\beta\in \mb{R}$, we may define the free energy
\begin{align*}
F_{a,b,\beta}(\xi) &= \frac{1}{\beta}\log Z_{a,b,\beta}(\xi),
\\
Z_{a,b,\beta}(\xi) &= \sum_{i\in [n]}\exp\big(\beta (a_i \xi + b_i)\big).
\end{align*}
Note that 
\[\bigg|\frac{d}{d\xi}F_{a,b,\beta}(\xi)\bigg| = \bigg|\frac{\sum_{i\in [n]}a_i\exp\big(\beta (a_i \xi + b_i)\big)}{Z_{a,b,\beta}(\xi)}\bigg|\le \snorm{a}_{\infty}.\]
Furthermore H\"older's inequality implies that $F_{a,b,\beta}(\xi)$ is convex. We now compute the second and third derivatives of $F_{\alpha,\beta}(\xi)$. We have that 
\begin{align*}
\frac{d^2}{d\xi^2}F_{a,b,\beta}(\xi) &= \frac{(\sum_{i\in [n]}a_i\exp(\beta a_i\xi + b_i))\sum_{i\in [n]} a_i^2\exp(\beta a_i\xi + b_i)}{\sum_{i\in [n]}\exp(\beta a_i\xi + b_i)} - \frac{\beta(\sum_{i\in [n]} a_i\exp(\beta a_i\xi + b_i))^2}{(\sum_{i\in [n]}\exp(\beta a_i\xi + b_i))^2}\\
\frac{d^3}{d\xi^3}F_{a,b,\beta}(\xi) &= \frac{\beta^2\sum_{i\in [n]}a_i^3\exp(\beta a_i\xi + b_i)}{\sum_{i\in [n]}\exp(\beta a_i\xi + b_i)} - \frac{3\beta^2(\sum_{i\in [n]}a_i\exp(\beta a_i\xi + b_i))(\sum_{i\in [n]}a_i^2\exp(\beta a_i\xi + b_i))}{(\sum_{i\in [n]}\exp(\beta a_i\xi + b_i))^2}\\
&\qquad\qquad\qquad + \frac{2\beta^2(\sum_{i\in [n]} a_i\exp(\beta a_i\xi + b_i))^3}{(\sum_{i\in [n]}\exp(\beta a_i\xi + b_i))^3}.
\end{align*}
The crucial observation is that 
\begin{equation}\label{eq:dev-bound}
\bigg|\frac{d^3}{d\xi^3}F_{a,b,\beta}(\xi)\bigg|\le 6\beta^2 \snorm{a}_{\infty}^3.  
\end{equation}

We now prove that $F_{a,b,\beta}$ is insensitive to the precise nature of the disorder given that $\snorm{a}_{\infty}$ is sufficiently small.

\begin{lemma}\label{lem:exchange}
Let $\xi, \xi'$ have finite third moments with $\mb{E}[\xi] = \mb{E}[\xi']$ and $\mb{E}[\xi^2] = \mb{E}[{\xi'}^2]$. Then 
\[
\big|\mb{E}[F_{a,b,\beta}(\xi)] - \mb{E}[F_{a,b,\beta}(\xi')]\big|\le \beta^2\snorm{a}_{\infty}^3\cdot \big(\bbE[|\xi|^3+|\xi'|^3]\big)
\]
\end{lemma}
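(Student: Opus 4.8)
The plan is to use a standard Lindeberg/Taylor exchange argument. We want to compare $\bbE[F_{a,b,\beta}(\xi)]$ with $\bbE[F_{a,b,\beta}(\xi')]$, and the key facts we have already established are that $F_{a,b,\beta}$ is thrice differentiable in $\xi$ with $|F_{a,b,\beta}'''(\xi)|\le 6\beta^2\snorm{a}_\infty^3$ (equation \eqref{eq:dev-bound}), that $\xi,\xi'$ have matching first and second moments, and that both have finite third moments.

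First I would perform a Taylor expansion of $F_{a,b,\beta}$ around $\xi=0$ to second order with Lagrange (or integral) remainder: writing $F_{a,b,\beta}(\xi) = F_{a,b,\beta}(0) + F_{a,b,\beta}'(0)\,\xi + \tfrac12 F_{a,b,\beta}''(0)\,\xi^2 + R(\xi)$, where the remainder satisfies $|R(\xi)| \le \tfrac{1}{6}\sup_t |F_{a,b,\beta}'''(t)|\cdot |\xi|^3 \le \beta^2\snorm{a}_\infty^3 |\xi|^3$ by \eqref{eq:dev-bound}. The same expansion holds for $\xi'$ with an analogous remainder $R(\xi')$ bounded by $\beta^2\snorm{a}_\infty^3|\xi'|^3$.

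Next I would take expectations and subtract. The zeroth-order term $F_{a,b,\beta}(0)$ is deterministic and cancels. Because $\bbE[\xi]=\bbE[\xi']$, the first-order terms $F_{a,b,\beta}'(0)\,\bbE[\xi]$ and $F_{a,b,\beta}'(0)\,\bbE[\xi']$ cancel. Because $\bbE[\xi^2]=\bbE[{\xi'}^2]$, the second-order terms $\tfrac12 F_{a,b,\beta}''(0)\,\bbE[\xi^2]$ likewise cancel. Therefore
\[
\big|\bbE[F_{a,b,\beta}(\xi)] - \bbE[F_{a,b,\beta}(\xi')]\big| = \big|\bbE[R(\xi)] - \bbE[R(\xi')]\big| \le \bbE[|R(\xi)|] + \bbE[|R(\xi')|] \le \beta^2\snorm{a}_\infty^3\big(\bbE[|\xi|^3] + \bbE[|\xi'|^3]\big),
\]
which is exactly the claimed bound. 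One should check that all expectations are finite, which follows since $|F_{a,b,\beta}'(0)|\le\snorm{a}_\infty$, $|F_{a,b,\beta}''(0)|$ is bounded (it is $\le \snorm{a}_\infty^2(1+\beta)$ or similar from the explicit formula), and $\xi,\xi'$ have finite second and third moments.

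There is no real obstacle here; the argument is essentially forced once \eqref{eq:dev-bound} is in hand. The only mild care point is justifying the Taylor remainder bound rigorously — one should use the integral form of the remainder, $R(\xi)=\int_0^\xi \tfrac{(\xi-t)^2}{2}F_{a,b,\beta}'''(t)\,\de t$, and bound $|R(\xi)|\le \tfrac{|\xi|^3}{6}\cdot 6\beta^2\snorm{a}_\infty^3 = \beta^2\snorm{a}_\infty^3|\xi|^3$ — and to make sure Fubini/dominated convergence applies when interchanging expectation with the Taylor expansion, which is fine given the moment assumptions.
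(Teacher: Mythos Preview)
Your proposal is correct and follows essentially the same approach as the paper: Taylor expand $F_{a,b,\beta}$ around $0$ to second order, use the matching first two moments to cancel the polynomial terms, and bound the remainder via \eqref{eq:dev-bound}. The paper's proof is slightly terser but otherwise identical.
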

\begin{proof}
By Taylor's theorem (with Lagrange error term), we have 
\[\bigg|F_{a,b,\beta}(\xi) - F_{a,b,\beta}(0) - \xi \cdot F_{a,b,\beta}'(0) - \frac{\xi^2 F_{a,b,\beta}''(0)}{2}\bigg|\le \frac{|\xi|^3 \cdot \snorm{F_{a,b,\beta}'''}_{\infty}}{6}.\]
As $\mb{E}[\xi] = \mb{E}[\xi']$ and $\mb{E}[\xi^2] = \mb{E}[{\xi'}^2]$, using the triangle inequality and \cref{eq:dev-bound} we have that 
\begin{align*}
\big|\mb{E}[F_{a,b,\beta}(\xi)] - \mb{E}[F_{a,b,\beta}(\xi')]\big| &\le\beta^2 \snorm{a}_{\infty}^3 \mb{E}[|\xi|^3 + |\xi'|^3].\qedhere
\end{align*}
\end{proof}

We will apply \cref{lem:exchange} to restricted free energies on subsets of $\cS_N$ with designated ``large'' and ``small'' coordinates (corresponding to the decomposition of Section~\ref{sec:decomp}). Given a subset $I\subseteq [N]$, we let $\vJ^{(p)}[I^j]$ be the $p$-tensor with $(i_1,\dots,i_p)$-entry equal to $J_{i_1,\dots,i_p}$ if exactly $j$ of $i_1,\dots,i_p$ lie in $I$, and otherwise $0$.
Thus $\vJ=\sum_{j=0}^p \vJ^{(p)}[I^j]$.

\begin{lemma}\label{lem:interpolation-on-sphere}
Fix $T,M\ge 1$ and $w\in\cB_N$ and $I\subseteq [N]$. Let $\mu\in\cP(\cS_N)$ be a non-zero finite measure such that for $\mu$-almost all $x$, we have $x_i=w_i$ if $i\in I$ and $|x_i|\leq T$ if $i\notin I$.
Then for $\vJ$ as in \eqref{eq:truncated-disorder-properties},
\[
\big|\mb{E}[F_{\beta}(\vJ^{(p)}[I^j];\mu)]-\mb{E}[F_{\beta}(\vG^{(p)}[I^j];\mu)]\big| 
\ll_P \beta^2 M N^{3/2}(T^{3}/\sqrt{N})^{p-j}.
\]
\end{lemma}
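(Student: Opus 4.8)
The plan is to apply the one-dimensional Lindeberg exchange lemma (\cref{lem:exchange}) entry-by-entry to the disorder tensor $\vJ^{(p)}[I^j]$, viewing the restricted free energy $F_\beta(\cdot;\mu)$ as a function of one coordinate at a time with the others frozen. The key point is to check that, in the relevant ``slot'' representation, each scalar coordinate $J_{i_1,\dots,i_p}$ enters the Hamiltonian $H_N(\bsig)$ linearly with a coefficient $a$ whose $\mu$-almost-sure sup-norm is small enough to make the cubic error in \cref{lem:exchange} acceptable after summing over all $O(N^p)$ entries.

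First I would fix an entry $(i_1,\dots,i_p)$ with exactly $j$ indices in $I$ and write $H_N(\bsig) = a(\bsig)\,J_{i_1,\dots,i_p} + (\text{terms not involving this entry})$, where by permutation symmetry of $\vJ^{(p)}$ the coefficient $a(\bsig)$ is (up to the combinatorial multiplicity $|\{i_1,\dots,i_p\}|!$ and the scaling $\gamma_p N^{-(p-1)/2}$) a product of the form $\sigma_{i_1}\cdots\sigma_{i_p}$. On $\supp(\mu)$ the coordinates indexed by $I$ equal $w_i$, hence are bounded by $\snorm{w}_\infty \le \sqrt N$, while the coordinates not in $I$ are bounded by $T$. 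Since exactly $j$ of the $p$ factors lie in $I$ and $p-j$ do not, we get $|a(\bsig)| \ll_P N^{-(p-1)/2}\cdot (\sqrt N)^{j}\cdot T^{p-j}$ for $\mu$-a.e.\ $\bsig$; the crucial gain is the factor $T^{p-j}/N^{(p-j)/2} = (T/\sqrt N)^{p-j}$ relative to the trivial bound. Now $\log\int \exp(\beta a(\bsig) J + \cdots)\,\de\mu(\bsig)$ is exactly of the form $\beta\,F_{a,b,\beta}(J)$ in the notation preceding \cref{lem:exchange} — here the ``index set'' is $\supp(\mu)$, $a$ ranges over the values $a(\bsig)$, and $b$ absorbs $H_N$ minus this entry. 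Applying \cref{lem:exchange} with $\xi = J_{i_1,\dots,i_p}^{\msf{mod}}$ (mean $0$, variance $|\{i_1,\dots,i_p\}|!/p!$, bounded by $\oM \le O(M)$ hence with third moment $\ll_P M$) and $\xi' = G_{i_1,\dots,i_p}$ (same first two moments by \eqref{eq:truncated-disorder-properties}, Gaussian third absolute moment also $\ll_P M$ since the variance is $O(1)$), and dividing by $\beta$, each single-entry swap costs at most
\[
\ll_P \beta^2 \snorm{a}_\infty^3 M \ll_P \beta^2 M \cdot N^{-3(p-1)/2} N^{3j/2} T^{3(p-j)}.
\]

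Next I would perform a standard telescoping: enumerate the (symmetry classes of) entries of $\vJ^{(p)}[I^j]$, replace them one at a time from $\vG$-law to $\vJ$-law, apply the triangle inequality, and note that at each step the conditional law of all other entries is irrelevant to the bound since \cref{lem:exchange} only uses the first three moments of the single swapped coordinate and a deterministic sup-bound on $a$. The number of such entries is $\binom{N}{j}\binom{N}{p-j}\cdot O_P(1) \ll_P N^p$ (or more precisely $N^j \cdot N^{p-j}$ up to constants depending on $p$). Multiplying the per-entry cost by the number of entries gives total error
\[
\ll_P \beta^2 M \cdot N^p \cdot N^{-3(p-1)/2} N^{3j/2} T^{3(p-j)}
= \beta^2 M N^{3/2} \cdot N^{(3j-3p)/2} T^{3(p-j)} \cdot N^{(p-j)}
= \beta^2 M N^{3/2} (T^3/N^{3/2})^{p-j} N^{p-j},
\]
which simplifies to $\beta^2 M N^{3/2}(T^3/\sqrt N)^{p-j}$ after collecting powers — exactly the claimed bound. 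One should double-check the bookkeeping of the $N^{-(p-1)/2}$ Hamiltonian normalization against the $N^p$ entry count, and that the slot-wise linear decomposition correctly accounts for diagonal/repeated-index multiplicities (absorbed into the $O_P$ constant via \eqref{eq:truncated-disorder-properties}).

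The main obstacle I anticipate is purely organizational rather than conceptual: making the ``one coordinate at a time'' argument rigorous in the presence of the $\mf{S}_p$-symmetry constraint, so that swapping a symmetry class of entries simultaneously is legitimate and the coefficient $a(\bsig)$ of each symmetrized coordinate is correctly identified (it is $\gamma_p N^{-(p-1)/2}\cdot |\{i_1,\dots,i_p\}|! \cdot \sigma_{i_1}\cdots\sigma_{i_p}$, with the factorial bounded by $p!$). A secondary subtlety is that the exponent in \cref{lem:exchange} is written for a discrete sum over $[n]$, whereas here $\mu$ is a general (possibly continuous) measure on $\cS_N$; but this is cosmetic — the derivative bounds \eqref{eq:dev-bound} and the Taylor argument go through verbatim for $F_{a,b,\beta}(\xi) = \frac1\beta \log\int \exp(\beta(a(x)\xi + b(x)))\,\de\mu(x)$ since convexity and the third-derivative bound $6\beta^2\snorm{a}_\infty^3$ only use Hölder/Jensen, which hold for arbitrary measures.
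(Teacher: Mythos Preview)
Your overall strategy is exactly the paper's: swap entries one at a time via \cref{lem:exchange} and sum the cubic errors. However there is a genuine gap in the quantitative step, masked by an algebra slip.

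You bound the coefficient of a fixed entry $(i_1,\dots,i_p)$ uniformly by $|a(\bsig)|\ll_P N^{-(p-1)/2}(\sqrt N)^j T^{p-j}$, using only $|w_i|\le\|w\|_\infty\le\sqrt N$, and then multiply by $\sim N^p$ entries. The resulting exponent of $N$ is
\[
p-\tfrac{3(p-1)}{2}+\tfrac{3j}{2}=\tfrac{3-p+3j}{2},
\]
whereas the target $N^{3/2}(T^3/\sqrt N)^{p-j}$ has exponent $\tfrac{3-p+j}{2}$. Your bound is therefore too large by a factor of $N^{j}$; the equality you wrote between $N^{p}\cdot N^{-3(p-1)/2}N^{3j/2}$ and $N^{3/2}\cdot N^{(3j-3p)/2}\cdot N^{p-j}$ is false (check the exponents), and this is precisely where the missing $N^j$ disappears.

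The fix is not to bound $|w_i|$ uniformly. For a given entry, the actual coefficient satisfies $\|a\|_\infty^3\ll_P N^{-3(p-1)/2}\,|w_{i_1}|^3\cdots|w_{i_j}|^3\,T^{3(p-j)}$. Summing this over the $j$ indices in $I$ (and the $p-j$ indices outside) gives
\[
\sum_{i_1,\dots,i_j\in I}\ \prod_{\ell=1}^j|w_{i_\ell}|^3
=\Big(\sum_{i\in I}|w_i|^3\Big)^j
\le \|w\|_3^{3j}
\le \big(\|w\|_\infty\|w\|_2^2\big)^j
\le N^{3j/2},
\]
so the entry count and the size of the $w$-factors are handled \emph{together} rather than separately. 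This recovers exactly $N^{3j/2}\cdot N^{p-j}$ in place of your $N^{3j/2}\cdot N^{p}$, and the claimed bound then follows. The point is that $w\in\cB_N$ gives you $\ell^2$ control, which the crude $\ell^\infty$ bound throws away.
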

\begin{proof}
Note that $\mu$ can be normalized to a probability measure without loss of generality. By a routine approximation argument we may assume $\mu$ has finite support. Furthermore note that $\mb{E}[|J_{i_1,\ldots,i_p}|^3]\le \oM\mb{E}[|J_{i_1,\ldots,i_p}|^2]\ll_{P} M$ for each $i_1,\dots,i_p\in [N]$. We will interpolate in two steps; we first interpolate to $\wt{G}$ which is Gaussian with appropriate symmetry constraints such that $\mb{E}[|J_{i_1,\ldots,i_p}|^2] = \mb{E}[|\wt{G}_{i_1,\ldots,i_p}|^2]$ and then interpolating these Gaussian to the appropriate variances.  

We apply \cref{lem:exchange} to each entry in $\vJ^{(p)}[I^j]$. There are at most $\ll_p |I|^j\cdot N^{(p-j)}$ such terms. The error incurred by \cref{lem:exchange} is at most
\begin{align*}
&\ll_p N^{-3(p-1)/2}\beta^2M\sum_{\substack{i_1,\ldots,i_j\in I\\ i_{j+1},\ldots,i_p\in [N]\setminus I}}\prod_{\ell=1}^j |w_{i_{\ell}}|^3\cdot T^{3(p-j)}\ll_p  N^{-3(p-1)/2}\beta^2M\cdot \snorm{w}_3^{3j}\cdot (T^3N)^{(p-j)}\\
&\ll_pN^{3/2}\beta^2M \cdot (\snorm{w}_{\infty} \cdot \snorm{w}_2^{2})^{j}\cdot (T^3N)^{(p-j)}N^{-3p/2}\ll_p \beta^{2}M N^{3/2}(T^{3}/\sqrt{N})^{(p-j)}. \qedhere
\end{align*}
\end{proof}

\begin{remark}
\label{rem:p-j=1}
    \cref{lem:interpolation-on-sphere} is useful when $p-j\geq 2$, as then the error term is sublinear in $N$.
    The case $p - j = 0$ is also easily addressed since 
    \[
    F_{\beta}(\vJ^{(p)}[I^p])
    =
    N^{-(p-1)/2}
    \la \vJ^{(p)}[I^p],w^{\otimes p}\ra.
    \]
    This is a weighted sum of the bounded, independent entries of $\vJ^{(p)}[I^p]$ and thus concentrates well; we handle these terms by a union bound over the sub-exponentially many points $w\in\cN_{T,\iota}$.
    
    The case $p-j = 1$ is more subtle, and indeed causes our results to be false in the Ising case.
    Even in the pure $p=1$ model, one has
    \[
    \sup_{\eps\in \{\pm 1\}^N}\sum_{i=1}^{N}\eps_i\xi_i = \sum_{i=1}^{N}|\xi_i|
    \approx 
    N\mb{E}[|\xi|]
    \]
    with high probability, which may not be universal.
    We address this using rotational symmetry of $\cS_N$.
\end{remark}

\subsection{Concentration}
The next standard bound shows $F_{\beta}(\vJ;\mu)$ concentrates for any $\mu\in\cP(\mc{B}_N)$.

\begin{lemma}\label{lem:concen-bound}
There exists a constant $C = C_{\ref{lem:concen-bound}}>0$ such that the following holds. Let $\mu$ be any probability measure on $\mc{B}_N$. Let $\vJ$ have independent entries with $\|\vJ\|_{\infty}\le \oM$ almost surely. 
Then
\[
\mb{P}\Big[\big|F_{\beta}(\vJ;\mu) - \mb{E}[F_{\beta}(\vJ;\mu)]\big|\ge t \Big] \le C\exp\Big(-\frac{t^2}{C\oM^2N}\Big)
.
\] 
Furthermore if $\vG$ denotes Gaussian disorder, then
\[
\mb{P}\Big[\big|F_{\beta}(\vG;\mu) - \mb{E}[F_{\beta}(\vG;\mu)]\big|\ge t \Big] \le C\exp\Big(-\frac{t^2}{CN}\Big)
.
\] 
\end{lemma}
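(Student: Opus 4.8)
The plan is to realize $F_\beta(\,\cdot\,;\mu)$ as a convex, globally Lipschitz function of the independent disorder coordinates and then invoke Talagrand's concentration inequality for convex functions of bounded independent random variables (respectively, Gaussian concentration in the second case). View the disorder as the vector $g=(J_{i_1,\dots,i_p})_{1\le p\le P,\; i_1\le\dots\le i_p}$ of its free (unsymmetrized) coordinates; these are independent and, in the first case, each supported in $[-\oM,\oM]$, while in the Gaussian case we instead write $G_{i_1,\dots,i_p}=(|\{i_1,\dots,i_p\}|!/p!)^{1/2}g_{i_1,\dots,i_p}$ with $g$ a standard Gaussian vector. For fixed $\bsig\in\cB_N$, the $\mf{S}_p$-symmetry of $\vJ$ makes $H_N(\bsig)$ linear in $g$, say $H_N(\bsig)=\langle g,\phi(\bsig)\rangle$, where the $(i_1\le\dots\le i_p)$-coordinate of $\phi(\bsig)$ equals $\tfrac{\gamma_p}{N^{(p-1)/2}}\cdot\tfrac{p!}{|\{i_1,\dots,i_p\}|!}\,\sigma_{i_1}\cdots\sigma_{i_p}$ (with $\big(\tfrac{p!}{|\{i_1,\dots,i_p\}|!}\big)^{1/2}$ in place of $\tfrac{p!}{|\{i_1,\dots,i_p\}|!}$ in the Gaussian case). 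Consequently, for $\beta\in(0,\infty)$, $F_\beta(\vJ;\mu)=\tfrac1\beta\log\int_{\cB_N}e^{\beta\langle g,\phi(\bsig)\rangle}\,\de\mu(\bsig)$ is convex in $g$, being $\tfrac1\beta$ times the logarithm of an integral of log-linear functions, and log-convexity is preserved by integration via H\"older; the endpoint temperatures give $F_0(\vJ;\mu)=\int H_N\,\de\mu$ (linear in $g$) and $F_\infty(\vJ;\mu)=\sup_{\bsig\in\supp(\mu)}H_N(\bsig)$ (a supremum of linear functions), both convex as well.

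Second, I would bound the Euclidean Lipschitz constant. For $\beta\in(0,\infty)$, $\nabla_g F_\beta(\vJ;\mu)$ is the Gibbs average $\langle\phi(\bsig)\rangle$ of $\phi(\bsig)$ under the tilted probability measure proportional to $e^{\beta H_N(\bsig)}\,\de\mu(\bsig)$, so Jensen's inequality gives $\|\nabla_g F_\beta\|_2\le\sup_{\bsig\in\cB_N}\|\phi(\bsig)\|_2$, and the same bound is immediate for $\beta\in\{0,\infty\}$. Using $\tfrac{p!}{|\{i_1,\dots,i_p\}|!}\le p!$, the multinomial identity $\sum_{i_1\le\dots\le i_p}\tfrac{p!}{|\{i_1,\dots,i_p\}|!}(\sigma_{i_1}\cdots\sigma_{i_p})^2=\|\bsig\|_2^{2p}$, and $\|\bsig\|_2^2\le N$ on $\cB_N$, one gets
\[
\|\phi(\bsig)\|_2^2=\sum_{p=1}^P\frac{\gamma_p^2}{N^{p-1}}\sum_{i_1\le\dots\le i_p}\Big(\tfrac{p!}{|\{i_1,\dots,i_p\}|!}\Big)^2(\sigma_{i_1}\cdots\sigma_{i_p})^2\;\le\;\sum_{p=1}^P p!\,\gamma_p^2\,\frac{\|\bsig\|_2^{2p}}{N^{p-1}}\;\le\;N\sum_{p=1}^P p!\,\gamma_p^2=:C_PN,
\]
and in the Gaussian case the factor $p!$ is absent, leaving $\|\phi(\bsig)\|_2^2\le N\sum_p\gamma_p^2$. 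Hence $F_\beta(\,\cdot\,;\mu)$ is $\sqrt{C_PN}$-Lipschitz with respect to the Euclidean norm on the free coordinates of $g$ (resp.\ $(N\sum_p\gamma_p^2)^{1/2}$-Lipschitz as a function of the standard Gaussian vector).

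Finally, I would conclude as follows. In the bounded case, affinely rescaling each coordinate of $g$ into $[-\tfrac12,\tfrac12]$ turns $F_\beta$ into a convex, $O(\oM\sqrt{C_PN})$-Lipschitz function of independent $[-\tfrac12,\tfrac12]$-valued variables, so Talagrand's convex concentration inequality gives $\mb{P}[|F_\beta(\vJ;\mu)-m|\ge t]\le 4\exp\big(-\Omega(t^2/(\oM^2C_PN))\big)$ for a median $m$; the standard replacement of the median by the mean (the tail bound forces $|m-\mb{E}F_\beta(\vJ;\mu)|\ll\oM\sqrt{C_PN}$) then yields the first claimed inequality after adjusting constants. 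In the Gaussian case, $F_\beta(\vG;\mu)$ is a $(N\sum_p\gamma_p^2)^{1/2}$-Lipschitz function of a standard Gaussian vector, so Gaussian concentration (the Borell--TIS inequality) immediately yields the second claimed inequality. I do not expect a genuine obstacle here; the only points demanding care are the bookkeeping of the $\mf{S}_p$-multiplicities in the definition of $\phi(\bsig)$, the treatment of the endpoint temperatures $\beta\in\{0,\infty\}$, and the observation that $\mu$ is supported in $\cB_N$ (rather than $\cS_N$), which is exactly what controls $\|\bsig\|_2^{2p}\le N^p$.
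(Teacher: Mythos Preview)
Your proposal is correct and follows essentially the same approach as the paper's proof: identify $F_\beta(\,\cdot\,;\mu)$ as a convex function of the independent disorder coordinates with Euclidean Lipschitz constant $O(\sqrt{N})$, then invoke Talagrand's convex concentration inequality in the bounded case and Gaussian concentration in the second case. The paper's argument is a two-line sketch that simply asserts convexity and the Lipschitz bound $N^{-(p-1)/2}\sup_{\bsig\in\cB_N}\|\bsig^{\otimes p}\|_2=\sqrt{N}$ before citing the relevant inequalities, whereas you supply the explicit bookkeeping (the $\mf{S}_p$-multiplicities in $\phi(\bsig)$, the endpoint temperatures, the rescaling to $[-\tfrac12,\tfrac12]$, and the median-to-mean conversion) that the paper leaves implicit.
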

\begin{proof}
In the first case, we apply Talagrand's inequality (see e.g. \cite[Corollary~1.2]{Coo12}). Note that the convex function $\vJ\mapsto F_{\beta}(\vJ;\mu)$ has Lipschitz constant (with respect to the Euclidean distance on $\vJ$) at most $N^{-(p-1)/2}\sup_{\sigma\in \mc{B}_N}\snorm{\sigma^{\otimes p}}_2 = \sqrt{N}$. Since $\|\vJ\|_{\infty}\le \oM$ almost surely, the result follows.

In the second case, an identical argument using concentration of Lipschitz functions of Gaussians (see e.g. \cite[(1.5)]{LT11}) completes the proof. 
\end{proof}

\section{Universality on Delocalized Subspheres}\label{sec:bound-dis}

We now prove universality of the free energy on each delocalized subsphere under uniformly bounded disorder. This is the main part of the proof.
We derive \cref{thm:main,thm:bai-yin-var} in the next subsection by combining this with a priori concentration and the preceding truncations.

\subsection{Lower bound on free--energy from delocalized vectors}
Let $\mu_{\text{Deloc}}$ be the uniform measure on the set
\begin{equation}
\label{eq:S-deloc}
\mc{S}_{\text{Deloc}} := \mc{S}_N \cap \big\{x\in \mb{R}^{N}:\snorm{x}_{\infty}\le N^{1/32}\big\}
.
\end{equation} 
Since $\mu_N(\mc{S}_{\text{Deloc}})\geq 1-N^{-\omega(1)}$, all Borel $S\subseteq\cS_N$ satisfy
\begin{equation}
\label{eq:mu-deloc-simple}
    \mu_{\text{Deloc}}(S)\leq (1+N^{-\omega(1)})\mu(S).
\end{equation}

We first lower bound $F_{\beta}(\vec{J})$ for bounded disorder by considering $\mc{S}_{\text{Deloc}}$. The idea is that by \eqref{eq:mu-deloc-simple} and rotational invariance, $\mu$ and $\mu_{\text{Deloc}}$ have essentially equal free energy under Gaussian disorder.
It will be convenient to write $\vec{G}^{(\geq 2)}$ for $\big(\vec{G}^{(2)},\dots,\vec{G}^{(P)}\big)$ and similarly for $\vJ$.

\begin{lemma}\label{lem:deloc}
We have  
\begin{equation}
\label{eq:first-item}
\mb{P}[|F_{\beta}(\vec{G}) - F_{\beta}(\vec{G}; \mu_{\text{Deloc}})|\ge N^{-1}]\le N^{-\omega(1)}.
\end{equation}
Additionally for disorder $\vJ$ as in \eqref{eq:truncated-disorder-properties}, we have 
\begin{equation}
\label{eq:second-item}
\mb{P}[|F_{\beta}(\vec{G}) - F_{\beta}(\vec{J}; \mu_{\text{Deloc}})|\ge C(\beta)N^{3/4}]\le N^{-\omega(1)}.
\end{equation}
\end{lemma}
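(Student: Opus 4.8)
The plan is to prove \eqref{eq:first-item} first --- the statement that for Gaussian disorder the delocalized measure $\mu_{\text{Deloc}}$ already carries essentially all of the free energy --- and then to deduce \eqref{eq:second-item} from \eqref{eq:first-item} together with the concentration bound \cref{lem:concen-bound}, the Lindeberg estimate \cref{lem:interpolation-on-sphere}, and a rotational-symmetry argument for the $p=1$ term. Throughout write $r:=\mu_N(\cS_N\setminus\mc{S}_{\text{Deloc}})$; since a uniform point of $\cS_N$ has all coordinates of size $O(\sqrt{\log N})$ with overwhelming probability, $r\le e^{-\Omega(N^{1/16})}=N^{-\omega(1)}$, and in particular $\|\mu_{\text{Deloc}}-\mu_N\|_{\mathrm{TV}}\le r$.

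\smallskip
\noindent\textit{Step 1: \eqref{eq:first-item}.} Writing $\mu_\beta$ for the Gibbs measure of $(\vG,\mu_N,\beta)$ and $B:=\cS_N\setminus\mc{S}_{\text{Deloc}}$, the identity $Z_\beta(\vG;\mu_N)=(1-\mu_\beta(B))^{-1}(1-r)Z_\beta(\vG;\mu_{\text{Deloc}})$ gives
\[
F_\beta(\vG)-F_\beta(\vG;\mu_{\text{Deloc}})=\frac1\beta\log\frac{1-r}{1-\mu_\beta(B)},
\]
whose right side lies in $\big[-\tfrac2\beta r,\ \tfrac2\beta\mu_\beta(B)\big]$ as soon as $r,\mu_\beta(B)\le\tfrac12$. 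So I would reduce the claim to showing that with probability $1-N^{-\omega(1)}$ the Gibbs mass $\mu_\beta(B)$ of the localized configurations is $N^{-\omega(1)}$, uniformly in $\beta\in[0,\infty]$ (reading $\beta=\infty$ as: the maximizer of $H_N$ lies in $\mc{S}_{\text{Deloc}}$, whence the difference is exactly $0$); the range $\beta\le N^{-100}$ I would dispose of separately by Taylor-expanding $Z_\beta(\vG;\mu)=1+\beta\int H_N\,d\mu+O(\beta^2\oGS(\vG)^2)$ and using $\|\mu_{\text{Deloc}}-\mu_N\|_{\mathrm{TV}}\le r$ with $\oGS(\vG)=O(N)$. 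Equivalently, one must show $\int_{B}e^{\beta H_N}\,d\mu_N\le N^{-\omega(1)}Z_\beta(\vG;\mu_N)$. For small-to-moderate $\beta$ this follows from Markov applied to the exact annealed identity $\E\!\int_{B}e^{\beta H_N}\,d\mu_N=r\,e^{\beta^2N\xi(1)/2}$ (which uses only $\|\sigma\|_2=\sqrt N$), together with the crude lower bound $Z_\beta(\vG;\mu_N)\ge e^{\beta\int H_N\,d\mu_N}\ge e^{-\beta O(N^{1/4})}$ from Jensen and concentration of $\int H_N\,d\mu_N$. The main obstacle is the regime of larger $\beta$, in particular $\beta=\infty$, where the statement essentially amounts to $\GS(\vG)-\sup_{\mc{S}_{\text{Deloc}}}H_N\le N^{-1}$; here I would bound $\int_{B}e^{\beta H_N}\,d\mu_N$ through the decomposition $\mu_N\approx\sum_{w}\mu_N(\mc C_w)(w+\mu_w)$ of \cref{sec:decomp} (with $T$ a small power of $N$ below $N^{1/32}$), using for $w\ne0$ the splitting $H_N(w+y)=\sum_pH_N^{(p)}(y)+(\text{terms with at least one index in }\supp(w))$, where the first group is the original model on a slightly smaller sphere and the second is controlled uniformly in $y$ by slice-injective-norm estimates of the type underlying \cref{lem:large-rem,lem:bound} (and, at $\beta=\infty$, by genuine delocalization of the maximizer of $H_N$), so that the $e^{N^{1-c}}$ localized pieces together contribute negligibly relative to the $w=0$ piece $\approx Z_\beta(\vG;\mu_{\text{Deloc}})$. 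Establishing this delocalization of near-optimizers, uniformly over $\beta$, is the crux of the whole lemma.

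\smallskip
\noindent\textit{Step 2: \eqref{eq:second-item}.} By \eqref{eq:first-item} and the Gaussian half of \cref{lem:concen-bound}, $F_\beta(\vG)$ lies within $N^{2/3}$ of $\E F_\beta(\vG;\mu_{\text{Deloc}})$ off an event of probability $N^{-\omega(1)}$; by the convex-bounded half of \cref{lem:concen-bound} (using $\|\vJ\|_\infty\le\oM\le N^{1/64}$), $F_\beta(\vJ;\mu_{\text{Deloc}})$ lies within $\oM\sqrt{N\log N}\le N^{2/3}$ of $\E F_\beta(\vJ;\mu_{\text{Deloc}})$ off such an event. It then suffices to prove $|\E F_\beta(\vG;\mu_{\text{Deloc}})-\E F_\beta(\vJ;\mu_{\text{Deloc}})|\le C(\beta)N^{3/4}$, which I would do in two steps. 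First, a telescoping run of the argument behind \cref{lem:interpolation-on-sphere} with $I=\emptyset$ and $T=N^{1/32}$ swaps each tensor $\vJ^{(p)}$ with $p\ge2$ for its Gaussian analogue at cost $\ll_P\beta^2\oM N^{3/2}(T^3/\sqrt N)^{p}\ll\beta^2N^{3/4}$ (the hypothesis $p\ge2$ is exactly what keeps the exponent of $N$ below $3/4$), giving $\E F_\beta(\vJ;\mu_{\text{Deloc}})=\E F_\beta(\vJ^{(1)}+\vG^{(\ge 2)};\mu_{\text{Deloc}})+O(\beta^2N^{3/4})$. Second, to treat the external field I would pass from $\mu_{\text{Deloc}}$ to $\mu_N$ on both sides (at cost $\tfrac1\beta N^{-\omega(1)}$, exactly as in Step 1, now for the mixed disorder "bounded field plus Gaussian $\ge2$ part"), then, conditioning on $\vJ^{(1)}$ and using that $\vG^{(\ge 2)}$ is rotationally invariant in law while $\mu_N$ is rotation invariant, rotate so that $\vJ^{(1)}$ becomes $\|\vJ^{(1)}\|_2e_1$; this shows $\E F_\beta(\vJ^{(1)}+\vG^{(\ge 2)};\mu_N)$ depends on $\vJ^{(1)}$ only through $\|\vJ^{(1)}\|_2$, and the same with $\vG^{(1)}$ in place of $\vJ^{(1)}$. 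Since $\big|\,\|\vJ^{(1)}\|_2-\|\vG^{(1)}\|_2\,\big|\ll N^{1/4}$ with probability $1-N^{-\omega(1)}$ (both norms concentrate near $\sqrt N$, the entries having unit variance and modulus at most $\oM$) and $v\mapsto F_\beta(v+H_N^{(\ge 2)};\mu_N)$ is $\sqrt N$-Lipschitz on $\cS_N$, the two expectations differ by $\ll N^{3/4}$. Chaining these yields $\E F_\beta(\vJ;\mu_{\text{Deloc}})=\E F_\beta(\vG;\mu_{\text{Deloc}})+O_\beta(N^{3/4})$, which with the concentration bounds above finishes the proof. As anticipated in \cref{rem:p-j=1}, this external-field step --- where Lindeberg interpolation is unavailable --- is the one that genuinely exploits rotational symmetry of $\cS_N$ and of $\vG^{(\ge 2)}$; the analogous small-$\beta$ bookkeeping and the $\beta=\infty$ case (passing to the limit, or arguing directly with $\GS$) are routine once Step 1 is in hand.
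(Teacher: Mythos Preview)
Your Step 1 has a real gap. You correctly reduce \eqref{eq:first-item} to showing the Gibbs mass $\mu_\beta(B)$ of localized configurations is $N^{-\omega(1)}$, and you correctly identify the large-$\beta$ regime as the hard case, but you do not close it: the proposed route via the subsphere decomposition and ``slice-injective-norm estimates'' is vague, and what it would actually require is that every $w\ne 0$ piece has strictly smaller free energy than the $w=0$ piece, i.e.\ essentially the full upper-bound machinery of \cref{subsec:upper}. The paper bypasses this entirely with a short rotational-symmetry argument that works uniformly in $\beta$: for Haar-random $O\in O(N)$ one has $\vec{G}^O\stackrel{d}{=}\vec{G}$, and for \emph{fixed} $\vec{G}$ the identity $\mathbb{E}^O[Z_\beta(\vec{G}^O;\mu_{\text{Deloc}})\mid\vec{G}]=Z_\beta(\vec{G})$ holds exactly (since $O\sigma\sim\mu_N$ for each fixed $\sigma$), while $Z_\beta(\vec{G}^O;\mu_{\text{Deloc}})\le Z_\beta(\vec{G})/(1-r)$ always. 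Thus the ratio is a $[0,1/(1-r)]$-valued random variable in $O$ with mean $1$, hence $\ge 1-N^{-\omega(1)}$ with probability $1-N^{-\omega(1)}$; Markov plus $\vec{G}^O\stackrel{d}{=}\vec{G}$ gives \eqref{eq:first-item}. No information about the structure of near-maximizers is needed.

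Your Step 2 is on the right track --- Lindeberg on $\vJ^{(\ge 2)}$, concentration, then rotational symmetry for the external field --- but the substep ``pass from $\mu_{\text{Deloc}}$ to $\mu_N$ for the mixed disorder $\vJ^{(1)}+\vG^{(\ge 2)}$, exactly as in Step 1'' inherits the gap above and is not fixed by the paper's version of Step 1 either: that rotation trick uses full $O(N)$-invariance of the disorder, which fails once $\vJ^{(1)}$ is held fixed. The paper handles this by first rescaling to $\widetilde{J^{(1)}}=\sqrt{N}\,\vJ^{(1)}/\|\vJ^{(1)}\|_2$, then conditioning on $t=\langle v,\widetilde{J^{(1)}}\rangle$ and running the rotation argument with $O$ Haar-random on the stabilizer of $\widetilde{J^{(1)}}$, where $\vG^{(\ge 2)}$ is still invariant. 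This works because the conditional law of $v$ given $t$ is uniform on a subsphere centered at $t\widetilde{J^{(1)}}/N$, whose $\ell^\infty$ norm is at most $2N^{1/64}\ll N^{1/32}$, so the subsphere is still $(1-N^{-\omega(1)})$-contained in $\mathcal{S}_{\text{Deloc}}$; this is precisely where the gap between the exponents $1/64$ and $1/32$ is used. After that, your final comparison via $\|\vJ^{(1)}\|_2\approx\|\vG^{(1)}\|_2$ matches the paper.
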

\begin{proof}
Let $O\in O(N)$ be Haar-random. When the underlying disorder is Gaussian, the rotational invariance of $H_N$ (c.f. \cref{eq:gaussian-covariance}) implies that
\[
H_N(\bsig;O):=H_N(O\bsig)
\]
agrees with $H_N$ in distribution as a function on $\cS_N$. Let $\vec G^{(p),O}$ be the associated $p$-tensor, so that 
\begin{equation}
\label{eq:G-O-def}
 H_N(\bsig;O)=\sum_{p=1}^P N^{-(p-1)/2}\la \vec G^{(p),O},\bsig^{\otimes p}\ra.
\end{equation}
Then $Z_{\beta}(\vG^{O})=Z_{\beta}(\vG)$ and \eqref{eq:mu-deloc-simple} implies that almost surely, 
\[Z_{\beta}(\vec{G}; \mu_{\text{Deloc}})\le (1+N^{-\omega(1)})Z_{\beta}(\vec{G}).\]
Using again \eqref{eq:mu-deloc-simple} with rotational symmetry gives
\[
\mb{E}[Z_{\beta}(\vec{G}^{O}; \mu_{\text{Deloc}})|\vec{G}] \ge (1-N^{-\omega(1)})Z_{\beta}(\vec{G}).\]
Combining the preceding estimates and averaging over $\vG$, we obtain
\[
\mb{E}\lt[
\frac{|Z_{\beta}(\vec{G}^{O}; \mu_{\text{Deloc}})-Z_{\beta}(\vec{G}^{O})|}{Z_{\beta}(\vec{G})}~|~\vec{G}
\rt]
\le 
N^{-\omega(1)}
\quad
\implies
\quad
\mb{E}\lt[
\frac{|Z_{\beta}(\vec{G}^{O}; \mu_{\text{Deloc}})-Z_{\beta}(\vec{G}^{O})|}{Z_{\beta}(\vec{G}^{O})}
\rt]\le N^{-\omega(1)}.
\]
If $|Z'-Z|\leq N^{-\omega(1)} Z$ for $Z>0$, it follows deterministically that $|\log(Z')-\log(Z)|\leq N^{-\omega(1)}$.
Since $\vec{G}^{O}$ and $\vec{G}$ have the same law, \eqref{eq:first-item} then follows from the preceding display by Markov.

Fixing $\vec{J}^{(1)}$, we apply \cref{lem:interpolation-on-sphere} with $w=\vec 0\in \bbR^N$ and $I=\emptyset$ to the entries of $\vJ^{(\ge 2)}$. Recalling the first line of \eqref{eq:truncated-disorder-properties}, we find:
\begin{equation}
\label{eq:5/8-bound}
\begin{aligned}
\big|
\mb{E}[F_{\beta}(\vec{J},\mu_{\text{Deloc}}) 
- 
\mb{E}[F_{\beta}(\vec{J}^{(1)},\vec{G}^{(\geq 2)},\mu_{\text{Deloc}})]
\big|
&\ll_{P} \beta^{2}N^{3/2 + 1/32}(N^{3/64}/\sqrt{N})^{2}
\\
&\ll_P C(\beta) N^{5/8}.
\end{aligned}
\end{equation}
Combining with Lemma~\ref{lem:concen-bound} almost proves \eqref{eq:second-item}: by adjusting $C(\beta)$ it implies
\begin{equation}
\label{eq:deloc-first-step}
\mb{P}\big[|F_{\beta}(\vec{J},\mu_{\text{Deloc}}) 
- 
F_{\beta}(\vec{J}^{(1)},\vec{G}^{(\geq 2)},\mu_{\text{Deloc}})|\geq C(\beta)N^{5/8}\big]
\leq 
N^{-\omega(1)}
\end{equation}
However as explained in Remark~\ref{rem:p-j=1}, the linear term requires a separate argument. 
The assumption $\snorm{\vec{J}^{(1)}}_{\infty}\le N^{1/64}$ easily implies
\begin{equation}
\label{eq:by-AH}
\mb{P}[\snorm{\vec{J}^{(1)}}_2^2\notin [N - N^{3/4}, N + N^{3/4}]] \le N^{-\omega(1)}
.
\end{equation}
Define the slight rescaling $\wt{J^{(1)}}=\vec{J}^{(1)}\sqrt{N}/\|\vec{J}^{(1)}\|_2\in\cS_N$.
Then \eqref{eq:by-AH} implies
\begin{equation}
\label{eq:rescale-J-safe}
\mb{E}\big[|F_{\beta}(\vec{J}^{(1)},\vec{G}^{(\geq 2)},\mu_{\text{Deloc}})]-\mb{E}[F_{\beta}(\wt{J^{(1)}},\vec{G}^{(\geq 2)},\mu_{\text{Deloc}})| \geq N^{3/4}\big]
\ll
N^{-\omega(1)}.
\end{equation}
Also by \eqref{eq:by-AH}, $\bbP[\snorm{\wt{J^{(1)}}}_{\infty}\le 2N^{1/64}]\geq 1-N^{-\omega(1)}$. 
On this event, we have
\begin{equation}\label{eq:compar}
\mb{P}^{v\sim \mu_N}[v\in \mc{S}_{\on{Deloc}}~|~\sang{v,\wt{J^{(1)}}} = t] \ge 1-N^{-\omega(1)},\quad\forall\, t\in [-N,N].
\end{equation}
Here we used that $10N^{1/64}\leq  N^{1/32}$ (recall \eqref{eq:S-deloc}). Namely the conditional law of $v$ given $\sang{v,\wt{J^{(1)}}} = t$ is uniform on a subsphere of $\cS_N$ with center $t\wt{J^{(1)}}/N$ obeying  $\|t\wt{J^{(1)}}/N\|_{\infty}\leq 2N^{1/64}$.

Let $\mu_{\text{Deloc},\wt{J^{(1)}},t}$ denote the (regular) conditional law of $v\sim\mu_{\text{Deloc}}$ given that $\sang{v,\wt{J^{(1)}}} = t$.
Let $\mu_{\wt{J^{(1)}},t}$ denote the conditional law of $v\sim\mu_{N}$ given that $\sang{v,\wt{J^{(1)}}} = t$.
We proceed similarly to the proof of \eqref{eq:first-item}, and let $O$ be a uniformly random orthogonal matrix conditioned to fix $\wt{J^{(1)}}$ (i.e. $O$ acts by a Haar-random orthogonal matrix for the subspace $(\wt{J^{(1)}})^{\perp}\subseteq\bbR^N$).
Then \cref{eq:compar} and rotational invariance within $(\wt{J^{(1)}})^{\perp}$ imply that for all $\snorm{\wt{J^{(1)}}}_{\infty}\le 2N^{1/64}$ and $t\in [-N,N]$:
\[
\mb{E}^{O}
\big[
\big|
Z_{\beta}(\wt{J^{(1)}},\vec{G}^{(\geq 2),O},\mu_{\text{Deloc},\wt{J^{(1)}},t})
-
Z_{\beta}(\wt{J^{(1)}},\vec{G}^{(\geq 2),O},\mu_{\wt{J^{(1)}},t})
\big|
\big]
\le 
N^{-\omega(1)}
Z_{\beta}(\wt{J^{(1)}},
\vec{G}^{(\geq 2)},\mu_{\wt{J^{(1)}},t})
.
\]
Here the notation $\mb{E}^{O}$ indicates that we have conditioned on $(\wt{J^{(1)}},\vec{G}^{(\ge 2)})$.
Integrating over $t$ and dividing by 
$Z_{\beta}(\wt{J^{(1)}},\vec{G}^{(\geq 2)})$ shows that if $\snorm{\wt{J^{(1)}}}_{\infty}\le 2N^{1/64}$, then
\[
\mb{E}^{O}
\lt[
\frac{
\big|
Z_{\beta}(\wt{J^{(1)}},\vec{G}^{(\geq 2),O},\mu_{\text{Deloc}})
-
Z_{\beta}(\wt{J^{(1)}},\vec{G}^{(\geq 2),O})
\big|
}{Z_{\beta}(\wt{J^{(1)}},
\vec{G}^{(\geq 2)})}
\rt]
\le 
N^{-\omega(1)}
.
\]
As in the end of the proof of \eqref{eq:first-item}, Markov now implies that if $\snorm{\wt{J^{(1)}}}_{\infty}\le 2N^{1/64}$, then
\begin{equation}
\label{eq:deloc-almost-done}
\bbP^{\vG}[
|F_{\beta}(\wt{J^{(1)}},\vec{G}^{(\geq 2)},\mu_{\text{Deloc}})
-
F_{\beta}(\wt{J^{(1)}},\vec{G}^{(\geq 2)})|\geq N^{-1}
]
\le N^{-\omega(1)}.
\end{equation}
Furthermore $\bbP\big[\big|\snorm{G^{(1)}}_2 - \sqrt{N}\big|\ge \lambda N^{1/5}\big]\le N^{-1}e^{-\lambda^2}$ for $\lambda\geq 1$.
By rotational invariance of $G^{(\geq 2)}$ we may without loss of generality couple $(\vJ,\vG)$ so that $\vG^{(1)}/\|\vG^{(1)}\|=\vJ^{(1)}/\|\vJ^{(1)}\|$ when comparing $F_{\beta}(\wt{J^{(1)}},\vec{G}^{(\geq 2)})$ and $F_{\beta}(\vec{G})$; from this observation we find:
\[
\bigg|\mb{P}[|F_{\beta}(\wt{J^{(1)}},\vec{G}^{(\geq 2)}) - F_{\beta}(\vec{G})|\geq N^{3/4}]\bigg|
\le
\bbP\big[\big|\snorm{G^{(1)}}_2 - \sqrt{N}\big|\ge  N^{1/4}\log N\big]
\ll 
N^{-\omega(1)}.
\]
Combining with \eqref{eq:deloc-first-step}, \eqref{eq:rescale-J-safe} and \eqref{eq:deloc-almost-done} finally completes the proof of \eqref{eq:second-item}.
\end{proof}

As $F_{\beta}(\vec{J}; \mu)\geq F_{\beta}(\vec{J}; \mu_{\text{Deloc}})-o(1)$ almost surely, \cref{lem:deloc} essentially implies the lower bounds in \cref{thm:main,thm:bai-yin-var}. For the upper bound, we use a similar strategy with $\mu_{\text{Deloc}}$ replaced by $w + \mu_w$ from \cref{lem:rnd-point}.

\subsection{Upper bound on free-energy}\label{subsec:upper}

We have the following estimate on certain projections of $\nabla H_N(w)$.
For a subspace $U\subseteq \bbR^N$, we write $P_U^{\perp}$ for the orthogonal projection onto $U^{\perp}$.

\begin{lemma}\label{lem:grad-estim}
Fix $T\in [\log N, N^{1/8}]$ and $\iota = N^{-16P}$. Let $(w,\mu_w)$ be as in \cref{lem:rnd-point} and set $I_w = \on{supp}(w)\subseteq [N]$. Suppose $\vec{J}$ is either as in \eqref{eq:truncated-disorder-properties} or is Gaussian. Then for each $w\in \mc{N}_{T,\iota}$, 
\[\mb{P}\bigg[\bigg|\snorm{P_{I_w}^{\perp}\nabla H_N(w)}_2 - N^{1/2}\sqrt{\xi'(\snorm{w}_2^2/N)}\bigg|\ge N^{1/2}(\log N)/T^2 + N^{3/10}\bigg]\le N^{-\omega(1)}.\]
\end{lemma}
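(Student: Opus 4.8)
The plan is to identify the covariance structure of $\nabla H_N(w)$ on the coordinates outside $I_w=\supp(w)$, observe that the projection $P_{I_w}^{\perp}$ makes this vector essentially isotropic, and then invoke standard concentration. Both disorder regimes will be treated almost uniformly, since only the first two moments of the entries enter the key computation.

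First I would write out the gradient: using symmetry of $\vJ^{(p)}$,
\[
(\nabla H_N(w))_k=\sum_{p=1}^P \frac{p\gamma_p}{N^{(p-1)/2}}\sum_{i_2,\dots,i_p}J_{k,i_2,\dots,i_p}\,w_{i_2}\cdots w_{i_p},\qquad k\in[N].
\]
For distinct $j,k\notin I_w$, any nonzero contribution to $\mb{E}[(\nabla H_N(w))_j(\nabla H_N(w))_k]$ would require two entries of equal arity whose index multisets coincide; since $j\neq k$ this forces $k$ into the support of some factor $w_{i_\ell}$, impossible as $\supp(w)=I_w\not\ni k$. Hence the off-diagonal entries of the covariance of $P_{I_w}^{\perp}\nabla H_N(w)$ vanish. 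For $j=k\notin I_w$, expanding the square and using $\mb{E}[J_{i_1,\dots,i_p}^2]=|\{i_1,\dots,i_p\}|!/p!$ (valid under \eqref{eq:truncated-disorder-properties} and for Gaussian disorder), the resulting sum collapses via the multinomial theorem to $\mb{E}[(\nabla H_N(w))_k^2]=\sum_p p\gamma_p^2 q^{p-1}=\xi'(q)$, where $q:=\snorm{w}_2^2/N\le 1$. Thus $\mb{E}[\snorm{P_{I_w}^{\perp}\nabla H_N(w)}_2^2]=(N-|I_w|)\xi'(q)$. (Equivalently, the full gradient covariance is $\xi'(q)\,\mathrm{Id}+\tfrac{\xi''(q)}{N}ww^{\top}$, whose rank-one part is annihilated by $P_{I_w}^{\perp}$ since $\supp(w)=I_w$; so in the Gaussian case $P_{I_w}^{\perp}\nabla H_N(w)$ is exactly a centered Gaussian with covariance $\xi'(q)\,\mathrm{Id}$ on $[N]\setminus I_w$.) Since $w\in\mc{L}_T$ has all nonzero coordinates of modulus $\ge T$ and $\snorm{w}_2\le\sqrt N$, we get $|I_w|\le N/T^2$, and with $\xi'(q)\le\xi'(1)=O_P(1)$ this yields $\big|\sqrt{(N-|I_w|)\xi'(q)}-N^{1/2}\sqrt{\xi'(q)}\big|\le O_P(\sqrt N/T^2)$.

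Next I would handle concentration of $f(\vJ):=\snorm{P_{I_w}^{\perp}\nabla H_N(w)}_2$, which is convex, being a seminorm of a linear function of $\vJ$. Two applications of Cauchy--Schwarz together with $\snorm{w}_2^2\le N$ show $\snorm{\nabla H_N(w;\vK)}_2^2\ll_P\sum_p\snorm{\vK^{(p)}}_2^2$ for any perturbation $\vK$; since the Euclidean norm of a symmetric tensor is within an $O_P(1)$ factor of that of its independent entries, $f$ is $O_P(1)$-Lipschitz in the disorder. When $\vJ$ is as in \eqref{eq:truncated-disorder-properties} its entries lie in $[-\oM,\oM]$ with $\oM\le N^{1/64}$, so Talagrand's inequality for convex Lipschitz functions of bounded independent variables (the same tool as in \cref{lem:concen-bound}) gives $\mb{P}[|f(\vJ)-\mb{E} f(\vJ)|\ge t]\ll_P\exp(-t^2/(C_P\oM^2))$; for Gaussian disorder, Gaussian concentration of Lipschitz functions (or directly the $\chi^2$ structure above) gives the stronger $\exp(-t^2/C_P)$. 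Taking $t=N^{1/64}\log N$ makes the failure probability $N^{-\omega(1)}$, and integrating the tail bound also gives $\mathrm{Var}(f)\le O_P(N^{1/32})$.

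Finally I would combine the pieces. By Jensen, $\mb{E} f\le\sqrt{\mb{E}[f^2]}=\sqrt{(N-|I_w|)\xi'(q)}$, while $(\mb{E} f)^2=\mb{E}[f^2]-\mathrm{Var}(f)$ together with $\sqrt{a-b}\ge\sqrt a-\sqrt b$ gives $\mb{E} f\ge\sqrt{(N-|I_w|)\xi'(q)}-O_P(N^{1/64})$. Chaining the estimate $|\sqrt{(N-|I_w|)\xi'(q)}-N^{1/2}\sqrt{\xi'(q)}|\le O_P(\sqrt N/T^2)$, the resulting bound $|\mb{E} f-N^{1/2}\sqrt{\xi'(q)}|\le O_P(N^{1/64})+O_P(\sqrt N/T^2)$, and the concentration $|f(\vJ)-\mb{E} f(\vJ)|\le N^{1/64}\log N$ valid with probability $1-N^{-\omega(1)}$, the triangle inequality gives $|f(\vJ)-N^{1/2}\sqrt{\xi'(q)}|\le N^{1/2}(\log N)/T^2+N^{3/10}$ for $N$ large, which is the claim. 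I expect the only delicate point to be the exact second-moment computation — specifically the vanishing of the off-diagonal covariance entries because $\supp(w)=I_w$, which is what renders $P_{I_w}^{\perp}\nabla H_N(w)$ (essentially) isotropic; everything after that is routine concentration of measure.
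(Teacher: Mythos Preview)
Your proof is correct. The expectation computation and the final chaining are essentially the same as the paper's, but the concentration step is genuinely different.

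The paper works with the \emph{squared} norm: it writes $\|P_{I_w}^{\perp}\nabla H_N(w)\|_2^2$ as $\sum_{k\in I_w^c}X_k^2$ where each $X_k$ is a linear form in the disorder with variance $\le N^{p-1}$, observes that the $X_k$ for different $k\in I_w^c$ are independent (since the weights $w_{i_\ell}$ vanish off $I_w$), bounds the subgaussian norm of each $X_k$ by $O(N^{1/4}\cdot N^{(p-1)/2})$ using the $\|\cdot\|_\infty$ bound on the entries, and applies Bernstein's inequality to the sum of squares. You instead apply Talagrand's convex-Lipschitz inequality directly to the norm $f(\vJ)=\|P_{I_w}^{\perp}\nabla H_N(w)\|_2$, which you verify is $O_P(1)$-Lipschitz in the disorder, and then bridge $\bbE f$ to $\sqrt{\bbE[f^2]}$ via the variance bound. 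Your route has the virtue of reusing exactly the same tool as \cref{lem:concen-bound} and sidestepping the need to check independence and subgaussianity of the $X_k$; the paper's route is more explicit about where the $N^{3/10}$ slack comes from (it gets $|f^2-\bbE f^2|\le N^{4/5}$) and does not need the extra Jensen/variance step to pass from $\bbE f$ to $\sqrt{\bbE f^2}$.
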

\begin{proof}
We explicitly write
\begin{equation}
\label{eq:projected-grad-explicit}
\snorm{P_{I_w}^{\perp}\nabla H_N(w)}_2^2=\sum_{p=1}^P\gamma_p^2\Big(\frac{1}{N}\Big)^{p-1}\sum_{i_p\in I_w^c}p\Big(\sum_{i_1,\dots,i_{p-1}\in [N]}J_{i_1,\dots,i_p} w_{i_1}\dots w_{i_{p-1}}\Big)^2.
\end{equation}
Here we used the $\mf{S}_p$-symmetry of $\vJ^{(p)}$ to group terms. As $|I_w^c|\ge N(1-1/T^2)$, 
\[
1-O_P(T^{-2})
\le 
\frac{\mb{E}[\snorm{P_{I_w}^{\perp}\nabla H_N(w)}_2^2]}{N \xi'(\snorm{w}_2^2/N)}
\le 
1.
\]
Next we show the right-hand side of \eqref{eq:projected-grad-explicit} concentrates. 
For each value of $i_p$, the inner sum has mean zero and variance $\|w\|_2^{2(p-1)}\le N^{p-1}$. Thus the inner sums are subgaussian with standard deviation proxy $O(N^{1/4}\cdot N^{(p-1)/2})$. Then by Bernstein's inequality, the outer sum is within $N^{4/5}$ of its expectation except with probability $\exp(-N^{-\Omega(1)})$. 
\end{proof}

We now upper bound the expected free energy on each $\mu_w$.
\begin{lemma}\label{lem:invar-upper-bound}
Fix $T\in [\log N, N^{1/8}]$ and $\iota = N^{-16P}$. Let $(w,\mu_w,\mu_w^{\ast})$ be as in \cref{lem:rnd-point}. If $\vJ$ is as in \eqref{eq:truncated-disorder-properties}, then 
\[\mb{E}[F_{\beta}(\vec{J};w + \mu_w)]- \mb{E}[F_{\beta}(\vec{G};w+\mu_w^{\ast})] \ll_P C(\beta) (N^{9/16} T^{6}) + N(\log N)/T^2 + N^{4/5}.\]
\end{lemma}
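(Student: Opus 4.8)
The plan is to peel apart the restricted Hamiltonian $H_N(w+x)$, for $x$ ranging over the subsphere $\mc{S}_w$ (supported off $I_w := \supp(w)$), according to how many indices of each disorder entry lie in $I_w$. Writing $H_N(w+x)=\sum_p N^{-(p-1)/2}\sum_{j=0}^p \langle \vJ^{(p)}[I_w^j],(w+x)^{\otimes p}\rangle$, the $j=p$ piece is the constant $c_w(\vJ)=\sum_p N^{-(p-1)/2}\langle \vJ^{(p)}[I_w^p],w^{\otimes p}\rangle$; it factors out of $\beta F_\beta=\log\int e^{\beta H_N(w+\cdot)}$, and since $\mathbb{E}[c_w(\vJ)]=0=\mathbb{E}[c_w(\vG)]$ it drops out of both expected free energies. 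The pieces with $j\le p-2$ (where at least two copies of $x$ appear) are treated by Lindeberg exchange: since $\mu=w+\mu_w$ satisfies $x_i=w_i$ on $I_w$ and $|x_i|\le 2T$ off $I_w$ (\cref{lem:rnd-point}\ref{it:rnd-point-3}), applying \cref{lem:interpolation-on-sphere} — chained over all tensors, which is legitimate because the entrywise estimate \cref{lem:exchange} is insensitive to the other, held-fixed, disorder — replaces each such piece of $\vJ$ by the corresponding Gaussian $\vG^{(p)}[I_w^j]$ at total cost $\ll_P \sum_{j\le p-2}\beta^2\oM N^{3/2}(T^3/\sqrt N)^{p-j}\ll_P C(\beta)N^{9/16}T^6$, using $\oM\le N^{1/64}$ and $T\le N^{1/8}$ from \eqref{eq:truncated-disorder-properties}. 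The remaining $j=p-1$ piece is exactly the linear term $\langle P_{I_w}^{\perp}\nabla H_N(w),x\rangle$ (the off-$I_w$ coordinates of $w$ vanish), the delicate ``external field'' term flagged in \cref{rem:p-j=1}.

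After these steps the comparison is between $\mathbb{E}\big[\tfrac1\beta\log\int_{\mc{S}_w}\exp(\beta(\tilde H(x)+\langle \ell_w,x\rangle))\,d\mu_w(x)\big]$ for the two models, where $\ell_w:=P_{I_w}^{\perp}\nabla H_N(w)$ is built from the $j=p-1$ disorder (non-Gaussian in the $\vJ$ model, whereas for Gaussian disorder it is $g\sim\mathcal N(0,\xi'(\snorm{w}_2^2/N)\,\mathrm{Id}_{I_w^c})$ by the covariance identity \eqref{eq:gaussian-covariance}), and $\tilde H$ is the Gaussian process on $\mc{S}_w$ coming from the $j\le p-2$ pieces; crucially $\ell_w$ and $\tilde H$ involve disjoint entries, hence are independent. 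First replace $\mu_w$ by the exact uniform measure $\mu_w^{\ast}$ on $\mc{S}_w$, at cost $N^{-\omega(1)}$, via the Radon--Nikodym bound \cref{lem:rnd-point}\ref{it:rnd-point-4} (the regime of extremely small $\beta$, where the $1/\beta$ coming from $\log(1+N^{-\omega(1)})$ is not automatically negligible, is handled crudely: $F_\beta$ is within $O_P(\beta N^{O_P(1)})$ of $F_0$, and $F_0$ has mean zero in both models, so both expected free energies are $N^{-\omega(1)}$ there). Then exploit rotational symmetry: conditionally on $\ell_w$, the Gaussian process $\tilde H$ is invariant in law under $O(I_w^c)$ (its covariance is a function of $\langle x,x'\rangle$ only, $\snorm{w}_2$ being fixed), so averaging over a Haar-random $O\in O(I_w^c)$ acting on the $x$ variable and over $\tilde H$ collapses the free energy above to $\Phi(\snorm{\ell_w}_2)$, where $\Phi(r):=\mathbb{E}_{\tilde H}\big[\tfrac1\beta\log\int_{\mc{S}_w}\exp(\beta(\tilde H(x)+r\langle e_1,x\rangle))\,d\mu_w^{\ast}(x)\big]$ is deterministic (independent of the unit vector $e_1\in\mathbb{R}^{I_w^c}$), and is the same for both models since $\tilde H$ has the same law in each. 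Moreover $\Phi$ is $\sqrt N$-Lipschitz, because $|\partial_r(\cdots)|$ is the Gibbs average of $\langle e_1,x\rangle$, bounded by $\sup_{x\in\mc{S}_w}|\langle e_1,x\rangle|\le\sqrt{N-\snorm{w}_2^2}$.

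The whole comparison therefore reduces to bounding $|\mathbb{E}[\Phi(\snorm{\ell_w}_2)]-\mathbb{E}[\Phi(\snorm{g}_2)]|$. By \cref{lem:grad-estim} (which applies to $\vJ$ as in \eqref{eq:truncated-disorder-properties} and to Gaussian disorder alike), both $\snorm{\ell_w}_2$ and $\snorm{g}_2$ lie within $\epsilon:=N^{1/2}(\log N)/T^2+N^{3/10}$ of $r^{\star}:=\sqrt{N\xi'(\snorm{w}_2^2/N)}$ with probability $1-N^{-\omega(1)}$; combining this with the Lipschitz bound and a Cauchy--Schwarz estimate on the complementary events (using $\mathbb{E}|\Phi(\snorm{\ell_w}_2)|^2\ll_P N^2$, which follows from $|\Phi(r)|\le r\sqrt N+\mathbb{E}_{\tilde H}[\sup_x|\tilde H(x)|]\ll_P r\sqrt N+N$ together with $\mathbb{E}\snorm{\ell_w}_2^2\ll_P N$) gives $|\mathbb{E}[\Phi(\snorm{\ell_w}_2)]-\mathbb{E}[\Phi(\snorm{g}_2)]|\le 2\sqrt N\,\epsilon+N^{-\omega(1)}=2N(\log N)/T^2+2N^{4/5}+N^{-\omega(1)}$. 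Adding back the Lindeberg cost $\ll_P C(\beta)N^{9/16}T^6$ and the negligible measure-swap cost yields the stated bound. The main obstacle is the penultimate step: recognizing that Lindeberg exchange disposes of everything except the linear term, and that the linear term's contribution — after averaging over the Gaussian ``non-linear in $x$'' part and over rotations of $\mc{S}_w$ — depends on the disorder only through $\snorm{P_{I_w}^{\perp}\nabla H_N(w)}_2$, which then concentrates universally; everything else is careful accounting of the truncation-scale parameters $\oM\le N^{1/64}$ and $T\le N^{1/8}$.
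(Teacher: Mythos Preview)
Your proof is correct and follows essentially the same approach as the paper's: decompose $H_N(w+x)$ by the count $j$ of indices in $I_w$, drop the $j=p$ constant in expectation, Lindeberg-exchange the $j\le p-2$ pieces via \cref{lem:interpolation-on-sphere}, pass from $\mu_w$ to $\mu_w^\ast$ via the one-sided Radon--Nikodym bound, and handle the $j=p-1$ linear term using the $O(I_w^c)$-invariance of the Gaussian quadratic-and-higher part together with \cref{lem:grad-estim}. The only organizational difference is that the paper first replaces the random linear coefficient $P_{I_w}^{\perp}\nabla H_N(w)$ by a deterministic $z$ of the target norm and then runs the remaining steps, whereas you keep $\ell_w$ random, use independence of $\ell_w$ from the exchanged Gaussian pieces to collapse to a $\sqrt N$-Lipschitz function $\Phi(\|\ell_w\|_2)$, and invoke \cref{lem:grad-estim} only at the very end; the ingredients and error budget are identical.
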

\begin{proof}
As before, set $I_w = \on{supp}(w)$.
We have the expansion 
\[
H_N(v)
=
H_N(w)
+
\la\nabla H_N(w),v-w\ra
+
\sum_{\substack{1\leq p\leq P,\\ 0\leq j\leq p-2}}
\binom{p}{j}\Big\la \vJ^{(p)}[I_w^j], w^{\otimes j}\otimes (v-w)^{\otimes (p-j)}\Big\ra.
\]
Since $\mb{E}[H_N(w)] = 0$, \cref{lem:concen-bound} (with $\mu$ a point mass at $w$) implies $\bbP^{\vJ}[|H_N(w)|\le N^{2/3}]\ge 1-N^{-\omega(1)}$. Writing $v' = v-w\sim \mu_w$, it suffices to understand the distribution of 
\[
\la\nabla H_N(w), v'\ra
+
\sum_{\substack{1\leq p\leq P,\\ 0\leq j\leq p-2}}
\binom{p}{j}\Big\la \vJ^{(p)}[I_w^j], w^{\otimes j}\otimes (v')^{\otimes (p-j)}\Big\ra.
\]
Note that $\la\nabla H_N(w), v'\ra$ depends only on $P_{I_w}^{\perp}\nabla H_N(w)$ since $\on{supp}(v')\subseteq [N]\setminus I_w$.
In turn, $P_{I_w}^{\perp}\nabla H_N(w)$ depends only on $\big(\vJ^{(p)}[I_w^{p-1}]\big)_{1\leq p\leq P}$, \ie is independent of the second term above. 
In light of \cref{lem:grad-estim}, we may replace $\nabla H_N(w)$ by a deterministic $z\in \mb{R}^{[N]\setminus I_w}$ with $\snorm{z}_2 = N^{1/2}\sqrt{\xi'(\snorm{w}_2^2/N)}$.
We then obtain the modified and recentered Hamiltonian 
\[
\wt{H_N}(v') := \la z, v'\ra
+
\sum_{\substack{1\leq p\leq P,\\ 0\leq j\leq p-2}}
\binom{p}{j}\Big\la \vJ^{(p)}[I_w^j], w^{\otimes j}\otimes (v')^{\otimes (p-j)}\Big\ra.
\]
Define the associated free energy, for general $\mu\in\cP(\cS_w)$, to be
\[\wt{F}_\beta(\vec{J},z;\mu) := \frac{1}{\beta}\log\int_{\bsig\in \mc{B}_N}\exp(\beta\wt{H_N}(\bsig))d\mu(\bsig).\]
Combining \cref{lem:concen-bound,lem:grad-estim} we bound the free energy error incurred by this replacement: 
\begin{equation}
\label{eq:error-replace-z}
\bbE[|F_{\beta}(\vJ,w+\mu_w)-\wt{F}_{\beta}(J,z,\mu_w)|]
\ll 
N(\log N)/T^2 + N^{4/5}.
\end{equation}
Iteratively applying \cref{lem:interpolation-on-sphere} with $I=I_w$ shows that for any fixed $z$ as above,
\begin{equation}
\label{eq:interpolate-given-z}
\big|\mb{E}[\wt{F}_\beta(\vec{J},z;\mu_w)] - \mb{E}[\wt{F}_\beta(\vec{G},z;\mu_w)]\big|
\ll_P 
C(\beta) (N^{9/16} T^{6}).
\end{equation}
Moreover \cref{lem:rnd-point}\ref{it:rnd-point-4} implies that almost surely,
\begin{equation}
\label{eq:a-s-bound}
\wt{F}_\beta(\vec{G},z;\mu_w) 
\le 
\wt{F}_\beta(\vec{G},z;\mu_w^{\ast})
+
N^{-\omega(1)}.
\end{equation}

Recall that $\mu_w^{\ast}$ is uniform on the sphere $\cS_w$ of radius $\sqrt{N - \snorm{w}_2^2}$ on coordinates $[N]\setminus I_w$. Therefore if $z,z'\in \mb{R}^{[N]\setminus I_w}$ satisfy $\snorm{z}_2 = \snorm{z'}_2$ then by orthogonal invariance,
\[
\mb{E}[\wt{F}_\beta(\vec{G},z;\mu_w^{\ast})] 
= 
\mb{E}[\wt{F}_\beta(\vec{G},z';\mu_w^{\ast})].
\]
This implies that for all $z\in \mb{R}^{[N]\setminus I_w}$ with $\snorm{z}_2 = N^{1/2}\sqrt{\xi'(\snorm{w}_2^2/N)}$: 
\begin{equation}
\label{eq:round-gaussian}
\bigg|\mb{E}[\wt{F}_\beta(\vec{G},z;\mu_w^{\ast})] - \mb{E}[\wt{F}_{\beta}(\vec{G},\vec{G}^{(1)},\mu_w^{\ast})]\bigg|
\ll_P 
N(\log N)/T^2 + N^{4/5}.
\end{equation}
Here we used that $|H_N(w)|\leq N^{2/3}$ with probability $1-N^{-\omega(1)}$ and the norm concentration of $P_{I_w}^{\perp}\nabla H_N(w)$ provided by \cref{lem:grad-estim}, both for Gaussian disorder $\vG$. 
Since we have by definition $\wt{F}_{\beta}(\vec{G},\vec{G}^{(1)},\mu_w^{\ast})=F_{\beta}(\vec G,w+\mu_w^{\ast})$, combining \eqref{eq:error-replace-z}, \eqref{eq:interpolate-given-z}, \eqref{eq:a-s-bound}, \eqref{eq:round-gaussian} completes the proof.
\end{proof}

\section{Finishing the proof}\label{sec:finish}
We now combine the previous sections to obtain our main results.

\subsection{Universality at Positive Temperature}

First we deduce the main convergence statements of \cref{thm:main,thm:bai-yin-var} for $\beta<\infty$.
We defer the full proof of quantitative rates in the former to Subsection~\ref{subsec:quantitative-rates-proof}, and necessity of moment bounds in the latter to Subsection~\ref{subsec:moment-bounds-necessary}.

\begin{corollary}
\label{cor:weak-universality-finite-beta}
    Fix $\beta\in [0,\infty)$. In the setting of \cref{thm:main}, convergence in probability holds:
    \[
    \plim_{N\to\infty} F_{\beta}(\vJ)/N=\Par(\xi,\beta).
    \]
    In the setting of \cref{thm:bai-yin-var}, one has the almost sure limit:
    \[
    \lim_{N\to\infty} F_{\beta}(\vJ)/N=\Par(\xi,\beta).
    \]
\end{corollary}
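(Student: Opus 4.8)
The plan is to sandwich $F_\beta(\vJ)/N$ between $\Par(\xi,\beta)\pm o(1)$, first reducing to the truncated disorder $\vJ^{\msf{mod}}$ of \cref{sec:trunc} and then feeding in the delocalized-subsphere estimates of \cref{sec:bound-dis}. Since $\Par(\xi,\beta)=\lim_N \mb{E}[F_\beta(\vG)]/N$ by \cref{thm:parisi}, it suffices to establish $|F_\beta(\vJ)-\mb{E}[F_\beta(\vG)]|=o(N)$ — with probability $1-N^{-\omega(1)}$ in the setting of \cref{thm:main}, and for all large $N$ almost surely in the setting of \cref{thm:bai-yin-var}. The reduction to $\vJ^{\msf{mod}}$ uses only $|F_\beta(\vJ)-F_\beta(\vJ')|\le\oGS(\vJ-\vJ')$ (linearity of $H_N$ in $\vJ$): writing $\vJ=\vJ^{\msf{small}}+\vJ^{\msf{tail}}+\vJ^{\msf{large}}+\sum_j\vJ^{\msf{scale}(j)}$, the bounds \cref{lem:tail-rem,lem:large-rem,lem:bound} give $|F_\beta(\vJ)-F_\beta(\vJ^{\msf{small}})|\le N^{1-c}$, and \cref{lem:diff-bound} together with concentration of $\oGS$ of the $O_P(\wt\delta^{1/2})$-bounded-entry Hamiltonian $\vJ^{\msf{mod}}-\vJ^{\msf{small}}$ gives $|F_\beta(\vJ^{\msf{small}})-F_\beta(\vJ^{\msf{mod}})|=o(N)$; in the \cref{thm:bai-yin-var} setting these hold a.s.\ for large $N$ by the Borel--Cantelli inputs of \cref{lem:diagonal,lem:tail-rem,lem:large-rem,lem:bound}. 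Henceforth we work with $\vJ^{\msf{mod}}$, which obeys \eqref{eq:truncated-disorder-properties} — in particular $\oM\le N^{1/64}$ — and we fix $T=N^{1/32}$, $\iota=N^{-16P}$.

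For the lower bound, the chain is: \eqref{eq:mu-deloc-simple} gives $F_\beta(\vJ^{\msf{mod}})\ge F_\beta(\vJ^{\msf{mod}};\mu_{\text{Deloc}})-N^{-\omega(1)}$, then \eqref{eq:second-item} of \cref{lem:deloc} and Gaussian concentration (\cref{lem:concen-bound} with $t=N^{3/4}$) give $F_\beta(\vJ^{\msf{mod}};\mu_{\text{Deloc}})\ge F_\beta(\vG)-C(\beta)N^{3/4}\ge\mb{E}[F_\beta(\vG)]-o(N)$ off an event of probability $N^{-\omega(1)}$. Hence $F_\beta(\vJ)\ge\mb{E}[F_\beta(\vG)]-o(N)$.

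For the upper bound, the chain is: by \cref{lem:meas-decomp}, $F_\beta(\vJ^{\msf{mod}})\le\tfrac1\beta\log|\mc{N}_{T,\iota}|+\max_{w\in\mc{N}_{T,\iota}}F_\beta(\vJ^{\msf{mod}};w+\mu_w)+o(1)$, and by \cref{fct:small-net} the first term is $O_P(N^{15/16}\log N)/\beta=o(N)$. For each fixed $w$, \cref{lem:concen-bound} concentrates $F_\beta(\vJ^{\msf{mod}};w+\mu_w)$ around its mean at a scale $N^{1-\Omega(1)}$ with failure probability small enough (using $\oM\le N^{1/64}$) to survive a union bound over the $e^{N^{15/16+o(1)}}$ points of $\mc{N}_{T,\iota}$; then \cref{lem:invar-upper-bound} — whose error terms are $o(N)$ for $T=N^{1/32}$ — gives $\mb{E}[F_\beta(\vJ^{\msf{mod}};w+\mu_w)]\le\mb{E}[F_\beta(\vG;w+\mu_w^{\ast})]+o(N)$ uniformly in $w$. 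The final ingredient is the one-sided Gaussian comparison $\mb{E}[F_\beta(\vG;w+\mu_w^{\ast})]\le\mb{E}[F_\beta(\vG)]$: the measure $w+\mu_w^{\ast}$ is uniform on a genuine subsphere $\cS_N\cap V$ (with $V=w+\mathrm{span}(e_i:i\notin\supp(w))$), and $\mu_N=\mb{E}_{O}[O_{\ast}(w+\mu_w^{\ast})]$ for $O$ Haar on $O(N)$, both being the unique $O(N)$-invariant probability measure on the orbit $\cS_N$; hence by rotational invariance of Gaussian $H_N$ and Jensen's inequality,
\[
\mb{E}[F_\beta(\vG)]=\tfrac1\beta\,\mb{E}\log\mb{E}_{O}\big[Z_\beta(\vG^{O};w+\mu_w^{\ast})\big]\ge\tfrac1\beta\,\mb{E}\,\mb{E}_{O}\log Z_\beta(\vG^{O};w+\mu_w^{\ast})=\mb{E}[F_\beta(\vG;w+\mu_w^{\ast})],
\]
using $\vG^{O}\ed\vG$. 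Chaining the four preceding bounds gives $F_\beta(\vJ^{\msf{mod}})\le\mb{E}[F_\beta(\vG)]+o(N)$, hence $F_\beta(\vJ)\le\mb{E}[F_\beta(\vG)]+o(N)$.

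Combining the two bounds with \cref{thm:parisi} yields convergence in probability in the setting of \cref{thm:main}. In the setting of \cref{thm:bai-yin-var}, every auxiliary failure event above has probability $N^{-\omega(1)}$ and is thus summable, so Borel--Cantelli upgrades all the estimates to hold a.s.\ for all large $N$, giving the almost sure limit. (The case $\beta=0$ is elementary, and $\beta>0$ enters the above only through harmless $\beta$-dependent constants.) I expect the main obstacle to be the interlocking quantitative constraints in the upper bound: \cref{lem:invar-upper-bound} forces $T$ to be a small power of $N$ (roughly $T\le N^{7/96}$), while the union bound over the subexponential net \cref{fct:small-net} needs the Gaussian-concentration scale $\oM N T^{-1}\sqrt{\log N}$ to remain $o(N)$, i.e.\ $\oM$ to be a small enough power of $N$; these are simultaneously satisfiable because $\vJ^{\msf{mod}}$ was built (after the $P$-enlargement noted around \eqref{eq:truncated-disorder-properties}) with $\oM\le N^{1/64}$, and one also has to correctly isolate the short Jensen/rotational-symmetry step that assembles the per-subsphere upper bounds.
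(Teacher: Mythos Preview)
Your proposal is correct and follows essentially the same route as the paper's proof: reduce to $\vJ^{\msf{mod}}$ via the truncation lemmas, use \cref{lem:deloc} for the lower bound, and for the upper bound combine \cref{lem:meas-decomp}, concentration with a union bound over $\cN_{T,\iota}$, \cref{lem:invar-upper-bound}, and the Jensen/rotational-invariance inequality $\mb{E}[F_\beta(\vG;w+\mu_w^\ast)]\le\mb{E}[F_\beta(\vG)]$. Two cosmetic remarks: your choice $T=N^{1/32}$ (versus the paper's $T=N^{0.01}$) is actually the one that cleanly meshes with the stated bound $\oM\le N^{1/64}$ in \eqref{eq:truncated-disorder-properties}; and in the \cref{thm:main} setting the truncation failure probability from \cref{lem:tail-rem,lem:large-rem} is only $N^{-c}$, not $N^{-\omega(1)}$, though this still suffices for convergence in probability.
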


\begin{proof}
    Let $c\geq \Omega_{P}(\eps)>0$ be a small constant which may vary from line to line.
    In the setting of \cref{thm:main}, applying \cref{lem:tail-rem,lem:large-rem,lem:bound,lem:diff-bound} shows 
    \[
    \bbP[|F_{\beta}(\vJ)-F_{\beta}(\vJ^{\msf{mod}})|\leq N^{1-c}]\geq 1-N^{1-c}.
    \]
   Here we have used that $\wt\delta\leq N^{-c}$ in the setting of \cref{thm:main}. Similarly in the setting of \cref{thm:bai-yin-var} the error is $o(N)$. Thus we focus on $F_{\beta}(\vJ^{\msf{mod}})$, which obeys \eqref{eq:truncated-disorder-properties}.
    The lower bound follows by \eqref{eq:second-item}.

    For the upper bound, taking $T= N^{0.01}$, \cref{lem:concen-bound,lem:invar-upper-bound} imply that for fixed $w\in\cN_{T,\iota}$, 
    \[
    \bbP[F_{\beta}(\vJ^{\msf{mod}};w+\mu_w)\leq F_{\beta}(\vG;w+\mu_w^{\ast})+N^{1-c}]\geq 1-e^{-N^{1-2c}}.
    \]
    Since $\log|\cN_{T,\iota}|\leq N^{1-c}$, it follows from \cref{lem:meas-decomp} that
    \[
    F_{\beta}(\vJ^{\msf{mod}})
    \leq 
    N^{1-c}
    +
    \max_{w\in\cN_{T,\iota}}
    \Big(F_{\beta}(\vJ^{\msf{mod}};w+\mu_w)
    +
    \log\mu_N(\cC_w)
    \Big)
    .
    \]
    Combining, and using again \cref{lem:concen-bound,lem:invar-upper-bound}, it remains to show that for each $w\in\cN_{T,\iota}$,
    \[
    \bbP[F_{\beta}(\vG;w+\mu_w^{\ast})
    +
    \log\mu_N(\cC_w)
    \leq 
    F_{\beta}(\vG)+N^{1-c}]\stackrel{?}{\geq} 1/2.
    \]
    Noting that $\log \mu_{N}(\cC_w)\le 0$, it suffices to prove that 
    \[
    \mb{E}[F_{\beta}(\vG;w+\mu_w^{\ast})]\le \mb{E}[F_{\beta}(\vG)].
    \]
    For $O$ be a random matrix drawn from Haar measure on the orthogonal group, and let $\vec{G}^{O}$ be defined via 
    \begin{equation}\label{eq:G-O-def}
    H_N(\bsig;O):=H_N(O\bsig)= \sum_{p=1}^P N^{-(p-1)/2}\la \vec G^{(p),O},\bsig^{\otimes p}\ra.
    \end{equation}
    Via orthogonal invariance of the law of $\vec{G}$, we have that 
\[
\mb{E}[Z_{\beta}(\vec G^{(O)};w+\mu_w^{\ast})]= \mb{E}^{O}[Z_{\beta}(\vec G^{(O)};w+\mu_w^{\ast})~|~\vec G]=\mb{E}[Z_{\beta}(\vec G)].
\]
The result then follows via Jensen's inequality conditionally on $\vec G$ applied to $F_{\beta}(\vec G^{(O)};w+\mu_w^{\ast})=\log(Z_{\beta}(\vec G^{(O)};w+\mu_w^{\ast}))$. Combining the obtained estimates completes the proof, using Borel--Cantelli for the almost-sure convergence in Theorem~\ref{thm:bai-yin-var}.
\end{proof}

\begin{remark}
\label{rem:lee-yin}
\cref{prop:lee-yin} follows exactly as above, modulo slight modifications in \cref{sec:trunc}.
Namely in \eqref{eq:the-sum-which-is-finite}, one instead finds by diagonalization a decreasing sequence $\delta_N^{\LY}\downarrow 0$ such that 
\[
\lim_{N\to\infty}
\bbP[\sup_{1\leq p\leq P}\sup_{i_1,\dots,i_p\in [N]}|J_{i_1,\dots,i_p}|\geq \delta_N^{\LY}\sqrt{N}]=0.
\]
Note that \eqref{eq:external-field-BY-truncation} does not require any change since the external field entries still have finite variance (which is trivially necessary to have $F_{\beta}(\vJ)\leq O(N)$).
Then as in \cref{lem:tail-rem}, we find that $\lim_{N\to\infty} \oGS(\vJ^{\msf{tail}})/N=0$.
Next, \cref{lem:large-rem} uses moment bounds only via Markov's inequality, hence goes through unchanged.
The same is true for \cref{lem:bound}.
Similarly \eqref{eq:Markov-2} still applies with arbitrarily small adjustment in the exponent, which suffices for all its uses.

Necessity of \eqref{eq:lee-yin} follows routinely from events of large individual entries, as in \cite[Section 4]{lee2014necessary}.
\end{remark}

\subsection{Universality of the Ground State}

Interpolation requires $\beta<\infty$; the following lemma reduces the case $\beta=\infty$ to large finite $\beta$. This is easier in the Gaussian case thanks to apriori up-to-constants bounds on the ground state, see \cite[Lemma 8]{CS17}.
Note that this issue is irrelevant in the Ising case since there $|F_{\beta}(\vJ)-GS(\vJ)|\leq N\log(2)/\beta$ holds deterministically.

\begin{proposition}[{\cite{Ban38}}]
\label{prop:banach}
For any symmetric tensor $\vJ^{(p)}\in \bbR^{N^p}$, we have 
    \[
    \oGS(\vJ)
    =
    N^{-(p-1)/2}
    \max_{\bsig_1,\dots,\bsig_p\in\cS_N}
    \la 
    \vJ^{(p)}
    ,
    \bsig_1\otimes\dots\otimes \bsig_p\ra.
    \]
\end{proposition}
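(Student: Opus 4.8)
The plan is to recognize the claimed identity as (a rescaling of) Banach's classical theorem that the multilinear and diagonal norms of a symmetric tensor over a Euclidean space coincide. Writing $\|\vJ^{(p)}\|_{\mathrm{mult}}=\sup_{\|x_i\|_2=1}\la\vJ^{(p)},x_1\otimes\dots\otimes x_p\ra$ and $\|\vJ^{(p)}\|_{\mathrm{diag}}=\sup_{\|x\|_2=1}|\la\vJ^{(p)},x^{\otimes p}\ra|$, we have $\oGS(\vJ)=N^{-(p-1)/2}\sup_{\bsig\in\cS_N}|\la\vJ^{(p)},\bsig^{\otimes p}\ra|=N^{1/2}\|\vJ^{(p)}\|_{\mathrm{diag}}$, while the right-hand side of the statement equals $N^{1/2}\|\vJ^{(p)}\|_{\mathrm{mult}}$. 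So it suffices to prove $\|\vJ^{(p)}\|_{\mathrm{mult}}=\|\vJ^{(p)}\|_{\mathrm{diag}}$. The inequality $\|\vJ^{(p)}\|_{\mathrm{diag}}\le\|\vJ^{(p)}\|_{\mathrm{mult}}$ is immediate, since a diagonal tuple $(x,\dots,x)$ is a particular $p$-tuple and one may flip the sign of a single copy of $x$ to make the value nonnegative; this already gives $\oGS(\vJ)\le$ RHS.

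For the reverse inequality I would induct on $p$, the case $p=1$ being trivial. By compactness fix a maximizing tuple $(x_1,\dots,x_p)$, normalized so that $\la\vJ^{(p)},x_1\otimes\dots\otimes x_p\ra=\Lambda:=\|\vJ^{(p)}\|_{\mathrm{mult}}\ge 0$. Freezing the last argument, the map $(y_1,\dots,y_{p-1})\mapsto\la\vJ^{(p)},y_1\otimes\dots\otimes y_{p-1}\otimes x_p\ra$ is a \emph{symmetric} $(p-1)$-linear form with multilinear norm exactly $\Lambda$, so the inductive hypothesis produces a unit vector $z$ with $|\la\vJ^{(p)},z^{\otimes(p-1)}\otimes x_p\ra|=\Lambda$. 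Since $y\mapsto\la\vJ^{(p)},z^{\otimes(p-1)}\otimes y\ra$ is a linear functional of norm $\le\Lambda$ attaining $\pm\Lambda$ at the unit vector $x_p$, first-order optimality forces $\la\vJ^{(p)},z^{\otimes(p-1)}\otimes y\ra=\Lambda\langle x_p,y\rangle$ for all $y$ (replacing $x_p$ by $-x_p$ if needed). In particular, with $b:=\langle x_p,z\rangle$, one gets $\la\vJ^{(p)},z^{\otimes p}\ra=\Lambda b$; if $b^2=1$ then $x_p=\pm z$ and we are done. Otherwise set $z^\perp:=(x_p-bz)/\sqrt{1-b^2}$ and $v_s:=\cos(s)\,z+\sin(s)\,z^\perp$; then $g(s):=\la\vJ^{(p)},v_s^{\otimes p}\ra$ is a trigonometric polynomial of degree $\le p$ with $\|g\|_\infty\le\|\vJ^{(p)}\|_{\mathrm{diag}}$ (each $v_s$ is a unit vector), and the relations above give $g(0)=\Lambda b$ and the controlled derivative $g'(0)=p\Lambda\sqrt{1-b^2}$.

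The crux — and the main obstacle — is to upgrade these facts into the \emph{sharp} conclusion $\|\vJ^{(p)}\|_{\mathrm{diag}}\ge\Lambda$; this is exactly where the Euclidean structure is essential. Applying Bernstein's inequality for trigonometric polynomials to $g$ only yields $\|\vJ^{(p)}\|_{\mathrm{diag}}\ge\Lambda\sqrt{1-b^2}$, which together with $\|\vJ^{(p)}\|_{\mathrm{diag}}\ge\Lambda|b|$ gives the lossy bound $\|\vJ^{(p)}\|_{\mathrm{mult}}\le\sqrt{2}\,\|\vJ^{(p)}\|_{\mathrm{diag}}$, and a naive polarization identity is worse still, losing a factor of order $p^p/p!$. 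Getting the constant $1$ requires a more careful extremal argument: one should instead track the full degree-$\le p$ polynomial $t\mapsto\la\vJ^{(p)},(z+t x_p)^{\otimes p}\ra$, which is pointwise bounded by $\|\vJ^{(p)}\|_{\mathrm{diag}}\|z+t x_p\|_2^{p}$, together with \emph{all} the first-order optimality relations at the maximizer and the reductions obtained by freezing fewer arguments; this is the content of Banach's theorem \cite{Ban38}. Once that sharp step is in hand, undoing the normalization (both sides of the proposition carry the common factor $N^{1/2}$ coming from $\|\bsig_i\|_2=\sqrt N$ against the $N^{-(p-1)/2}$ prefactor) gives the stated identity.
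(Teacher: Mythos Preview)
The paper does not give its own proof of this proposition: it is stated as a classical result of Banach and simply cited as \cite{Ban38}. So there is no in-paper argument to compare against; the relevant question is whether your proposal actually constitutes a proof.

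It does not. Your reduction to the equality $\|\vJ^{(p)}\|_{\mathrm{diag}}=\|\vJ^{(p)}\|_{\mathrm{mult}}$ is correct, the easy inequality is fine, and your inductive setup for the hard direction (freeze one slot, apply the $(p-1)$-case, then study the one-parameter family $v_s$) is a reasonable route. But you yourself identify the gap: from $g(0)=\Lambda b$ and $g'(0)=p\Lambda\sqrt{1-b^2}$ together with $\|g\|_\infty\le\|\vJ^{(p)}\|_{\mathrm{diag}}$, the tools you invoke (Bernstein, polarization) only give $\|\vJ^{(p)}\|_{\mathrm{mult}}\le C_p\,\|\vJ^{(p)}\|_{\mathrm{diag}}$ with $C_p>1$, and you then write ``this is the content of Banach's theorem \cite{Ban38}.'' That is circular: you have reduced the proposition to itself. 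The sharp step---showing the extremal trigonometric polynomial under your constraints forces $\|g\|_\infty\ge\Lambda$---is exactly the substance of Banach's argument, and your proposal does not supply it.

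If you want a self-contained proof, one clean way is to use the Lagrange-multiplier conditions at a maximizer $(x_1,\dots,x_p)$ of the multilinear form to show all $x_i$ can be taken equal up to sign; alternatively, Banach's original induction works with a more careful analysis of the polynomial $t\mapsto\la\vJ^{(p)},(z+tx_p)^{\otimes p}\ra/\|z+tx_p\|_2^p$, exploiting that its maximum on $\bbR$ is at most $\|\vJ^{(p)}\|_{\mathrm{diag}}$ while its behavior near $t=0$ and $t=\infty$ is pinned down by the optimality relations you derived. Either way, what you have written is an outline that stops precisely at the nontrivial point, which is acceptable only if---as the paper does---you are content to treat the result as a black-box citation.
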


\begin{lemma}
\label{lem:positive-to-zero-temp}
There exists an absolute constant $C$ such that for any $p$ and large enough $N$, for any $\vJ\in \bbR$ and $\beta\in [1,\infty)$, 
\[\GS(\vJ)-\frac{CN}{\beta}\log\lt(2+\frac{P\beta\oGS(\vJ)}{N}\rt)
\leq F_{\beta}(\vJ)\leq \GS(\vJ).\]
\end{lemma}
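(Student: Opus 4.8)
The plan is to prove the two inequalities separately. The upper bound $F_\beta(\vJ)\le\GS(\vJ)$ is immediate: since $\mu_N$ is a probability measure, $Z_\beta(\vJ)=\int_{\cS_N}\exp(\beta H_N(\bsig))\,\de\mu_N(\bsig)\le\exp(\beta\GS(\vJ))$, whence $F_\beta(\vJ)=\tfrac{1}{\beta}\log Z_\beta(\vJ)\le\GS(\vJ)$.

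For the lower bound I would restrict the partition function to a small geodesic cap around a maximizer of $H_N$, so the first step is a Lipschitz estimate for $H_N$ on $\cS_N$ controlled by $\oGS(\vJ)$. Fixing $\bsig\in\cS_N$ and $\tau\in\cS_N$ with $\la\bsig,\tau\ra=0$, consider the great circle $\rho(\phi)=\cos\phi\,\bsig+\sin\phi\,\tau\in\cS_N$. Expanding $\rho(\phi)^{\otimes p}$ into the $2^p$ terms obtained by choosing $\bsig$ or $\tau$ in each slot, and using the $\mf{S}_p$-symmetry of $\vJ^{(p)}$, one sees that $\psi(\phi):=H_N(\rho(\phi))$ is a trigonometric polynomial in $\phi$ of degree at most $P$ with $\|\psi\|_\infty\le\oGS(\vJ)$. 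Bernstein's inequality for trigonometric polynomials then gives $|\psi'(0)|\le P\,\oGS(\vJ)$, i.e.\ $|\la\nabla H_N(\bsig),\tau\ra|\le P\,\oGS(\vJ)$ since $\rho'(0)=\tau$. Taking the supremum over unit vectors $\tau\perp\bsig$ shows that the tangential gradient of $H_N$ at $\bsig$ has Euclidean norm at most $P\,\oGS(\vJ)/\sqrt N$, so the restriction of $H_N$ to $\cS_N$ has geodesic Lipschitz constant at most $P\,\oGS(\vJ)/\sqrt N$. I expect this to be the only nontrivial point: one cannot control the ground state energy of each degree-$p$ part $H_N^{(p)}$ individually by $\oGS(\vJ)$, so it is important that the great-circle restriction packages all degrees into a single trigonometric polynomial to which Bernstein applies.

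Next, let $\bsig^{\star}\in\cS_N$ attain $H_N(\bsig^{\star})=\GS(\vJ)$ (it exists by compactness of $\cS_N$), and for $\theta\in(0,\pi/2]$ let $\cC_\theta\subseteq\cS_N$ be the geodesic cap of angular radius $\theta$ about $\bsig^{\star}$. The Lipschitz bound gives $H_N(\bsig)\ge\GS(\vJ)-\theta P\,\oGS(\vJ)$ for all $\bsig\in\cC_\theta$, and using the standard spherical cap estimate $\mu_N(\cC_\theta)\ge(\theta/C_0)^N$ (valid for large $N$ with $C_0$ a universal constant), I obtain
\[
F_\beta(\vJ)\ \ge\ \frac{1}{\beta}\log\!\Big(e^{\beta(\GS(\vJ)-\theta P\,\oGS(\vJ))}\,\mu_N(\cC_\theta)\Big)\ \ge\ \GS(\vJ)-\theta P\,\oGS(\vJ)+\frac{N}{\beta}\log(\theta/C_0).
\]
Finally I would optimize by taking $\theta=\min\!\big(\pi/2,\ N/(\beta P\,\oGS(\vJ))\big)$. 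When $N/(\beta P\,\oGS(\vJ))\le\pi/2$ this yields $F_\beta(\vJ)\ge\GS(\vJ)-\tfrac{N}{\beta}\big(1+\log C_0+\log\tfrac{\beta P\,\oGS(\vJ)}{N}\big)$, while otherwise $\theta=\pi/2$ and $\beta P\,\oGS(\vJ)\le\tfrac{2N}{\pi}$, giving $F_\beta(\vJ)\ge\GS(\vJ)-\tfrac{N}{\beta}\big(1+\log(2C_0/\pi)\big)$. In both cases the deficit is at most $\tfrac{CN}{\beta}\log\!\big(2+\tfrac{P\beta\,\oGS(\vJ)}{N}\big)$ after enlarging $C$ to an absolute constant; the additive $2$ inside the logarithm absorbs the $O(1)$ terms and covers the degenerate case $\oGS(\vJ)=0$ (where $H_N\equiv0$ and equality holds up to $\tfrac{CN}{\beta}\log 2$). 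Verifying the "large enough $N$" threshold and the spherical cap volume estimate is routine.
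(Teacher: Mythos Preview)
Your proof is correct and takes a genuinely different route from the paper. The paper bounds the full Euclidean gradient on $\cB_N$ degree-by-degree: it invokes Banach's theorem on symmetric tensors (\cref{prop:banach}) to obtain $\|\nabla H_N^{(p)}(\bx)\|\le pN^{-1/2}\oGS(\vJ^{(p)})$ for each $p$, then sums and restricts to a Euclidean cap $S_r=\{\bsig:\|\bsig-\bsig_*\|\le r\sqrt{N}\}$. Your argument instead restricts $H_N$ to a great circle, observes this is a trigonometric polynomial of degree $\le P$ with sup norm $\le\oGS(\vJ)$, and applies the classical Bernstein inequality to bound the \emph{tangential} gradient directly by $P\,\oGS(\vJ)/\sqrt{N}$; you then work with a geodesic cap.

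The practical difference is exactly the one you flag: the paper's summation really yields $\|\nabla H_N\|\le N^{-1/2}\sum_p p\,\oGS(\vJ^{(p)})$, and there is no general inequality $\sum_p\oGS(\vJ^{(p)})\le C\,\oGS(\vJ)$ (cancellation between degrees can make $\oGS(\vJ)$ much smaller). Your Bernstein argument packages all degrees at once and delivers the bound in terms of $\oGS(\vJ)$ as stated, with no appeal to \cref{prop:banach}. For the paper's purposes this distinction is immaterial, since in \cref{cor:positive-to-zero-temp} each $\oGS(\vJ^{(p)})$ is controlled separately anyway; but your version proves the lemma exactly as written and is more self-contained. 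The remaining steps (cap volume lower bound, optimization in $\theta$, absorbing constants into $C\log(2+\cdot)$) are routine and carried out correctly.
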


\begin{proof}
    The latter bound is obvious.
    For the former, we begin with the deterministic bound
    \begin{equation}
    \label{eq:HN-Lipschitz}
    \|\nabla H_N^{(p)}(\bx)\|
    =
    pN^{-(p-1)/2}
    \|\la \vJ^{(p)},\bx^{\otimes (p-1)}\ra\|
    {\leq}
    pN^{-1/2}\oGS(\vJ^{(p)})
    \end{equation}
    for all $\bx\in \cB_N$.
    The latter follows by applying Proposition~\ref{prop:banach} to the symmetrization of $\vJ$.
    Summing, 
    \begin{equation}
    \label{eq:gradient-bound}
    \|\nabla H_N(\bx)\|
    \leq 
    PN^{-1/2}\oGS(\vJ).
    \end{equation}
    Let the maximizer $\bsig_*\in\cS_N$ satisfy $H_N(\bsig_*)=\GS(\vJ)$.
    With $r=\lt(1+\frac{P\beta^2\oGS(\vJ)}{N}
    \rt)^{-1}$
    let 
    \[
    S_r
    =
    \{\bsig\in\cS_N~:~\|\bsig-\bsig_*\|\leq r\sqrt{N}\}.
    \]
    Then for all $\bsig\in S_r$, \eqref{eq:HN-Lipschitz} gives:
    \[
    \beta H_N(\bsig)
    \geq 
    \beta\GS(\vJ)
    -
    \frac{N}{\beta}.
    \]
    Therefore
    \begin{align*}
    F_{\beta}(\vJ)
    &\geq 
    \frac{1}{\beta}
    \log\int_{S_r}
    \exp(\beta H_N(\bsig))
    ~\de\bsig
    \\
    &\geq 
    \GS(\vJ)
    -
    \frac{N}{\beta}
    +
    \frac{\log \mu_{\cS_N}(S_r)}{\beta}
    \\
    &\geq 
    \GS(\vJ)
    -
    \lt(\frac{N}{\beta}
    +
    \frac{CN\log r}{\beta}\rt).
    \qedhere
    \end{align*}
\end{proof}

\begin{corollary}
\label{cor:positive-to-zero-temp}
    \cref{cor:weak-universality-finite-beta} holds for $\beta=\infty$.
\end{corollary}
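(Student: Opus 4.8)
The plan is to sandwich $\GS(\vJ)/N$ using \cref{lem:positive-to-zero-temp} together with the already-established finite-$\beta$ universality (\cref{cor:weak-universality-finite-beta}), and then take $N\to\infty$ followed by $\beta\to\infty$; at the end, $\Par(\xi,\beta)\to\lim_{\beta\to\infty}\Par(\xi,\beta)=\Par(\xi,\infty)$ is supplied by \cref{thm:parisi}. Throughout, statements holding ``with high probability'' should be read as ``almost surely for all large $N$'' in the setting of \cref{thm:bai-yin-var}. The lower bound is immediate: since $F_{\beta}(\vJ)\le \GS(\vJ)$ deterministically for every finite $\beta$ (this is the easy half of \cref{lem:positive-to-zero-temp}), \cref{cor:weak-universality-finite-beta} gives $\liminf_{N\to\infty}\GS(\vJ)/N\ge \Par(\xi,\beta)$ for each $\beta<\infty$, hence $\liminf_{N\to\infty}\GS(\vJ)/N\ge \lim_{\beta\to\infty}\Par(\xi,\beta)$.

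The first real step is an a priori linear bound: there is a constant $G_0=G_0(P,\gamma_1,\dots,\gamma_P)$ with $\oGS(\vJ)\le G_0 N$ with high probability. The point is that $\oGS(\vJ)=\max(\GS(\vJ),\GS(-\vJ))=\oGS(-\vJ)$, and that $-\vJ$ obeys exactly the hypotheses of \cref{thm:main} (resp. \cref{thm:bai-yin-var}) that $\vJ$ does, since the moment assumptions concern $|J_{i_1,\dots,i_p}|$ and the variance pattern and $\xi$ are unchanged. Thus \cref{cor:weak-universality-finite-beta} at $\beta=1$ applies to both $\vJ$ and $-\vJ$, and applying the lower bound in \cref{lem:positive-to-zero-temp} with $\beta=1$ to $\pm\vJ$ and taking a maximum yields, writing $g_N:=\oGS(\vJ)/N$,
\[
g_N\le \frac1N\max\big(F_1(\vJ),F_1(-\vJ)\big)+C\log\big(2+Pg_N\big).
\]
With high probability the first term is at most, say, $2\Par(\xi,1)$ for all large $N$, and the scalar inequality $g\le 2\Par(\xi,1)+C\log(2+Pg)$ forces $g$ below some constant $G_0$, giving the claim.

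For the matching upper bound, I would work on the event $\{\oGS(\vJ)\le G_0N\}$, which has probability tending to $1$ (resp. holds eventually almost surely). There \cref{lem:positive-to-zero-temp} gives, for every $\beta\ge 1$,
\[
\GS(\vJ)\le F_{\beta}(\vJ)+\frac{CN}{\beta}\log\big(2+P\beta G_0\big),
\]
so dividing by $N$ and applying \cref{cor:weak-universality-finite-beta} at this fixed $\beta$ gives $\limsup_{N\to\infty}\GS(\vJ)/N\le \Par(\xi,\beta)+\frac{C}{\beta}\log(2+P\beta G_0)$. Sending $\beta\to\infty$, the error term is $O((\log\beta)/\beta)\to 0$ while $\Par(\xi,\beta)\to\lim_{\beta\to\infty}\Par(\xi,\beta)$, whence $\limsup_{N\to\infty}\GS(\vJ)/N\le \lim_{\beta\to\infty}\Par(\xi,\beta)$. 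Combined with the lower bound, $\GS(\vJ)/N\to\lim_{\beta\to\infty}\Par(\xi,\beta)=\Par(\xi,\infty)$, which is precisely the $\beta=\infty$ case of \cref{cor:weak-universality-finite-beta}: the convergence is almost sure in the setting of \cref{thm:bai-yin-var} (every step holds a.s. there), and in probability in the setting of \cref{thm:main}, choosing $\beta$ large as a function of the target accuracy before letting $N\to\infty$.

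The only step needing genuine care is the a priori bound: it must be truly $O(N)$, not merely polynomial, so that the correction $\frac{CN}{\beta}\log(\cdots)$ in \cref{lem:positive-to-zero-temp} is $o_\beta(N)$; this is exactly why one runs the self-referential argument through $\pm\vJ$ and \cref{cor:weak-universality-finite-beta} rather than a crude deterministic tensor-norm estimate. Everything else — the bookkeeping for the iterated limit $N\to\infty$ then $\beta\to\infty$, and the transcription between the ``in probability'' and ``almost sure'' regimes — is routine.
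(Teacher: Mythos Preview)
Your proof is correct and follows essentially the same route as the paper: obtain an a priori $\oGS(\vJ)\le O(N)$ bound by feeding finite-$\beta$ universality for $\pm\vJ$ into the self-referential inequality from \cref{lem:positive-to-zero-temp}, then use that bound inside \cref{lem:positive-to-zero-temp} at large $\beta$ and send $\beta\to\infty$. The only cosmetic difference is that the paper derives the a priori bound pure-model-by-pure-model (bounding each $\oGS(\vJ^{(p)})$ and summing via triangle inequality) whereas you run the self-referential argument directly on the mixed Hamiltonian; both work, and your version is if anything slightly cleaner.
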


\begin{proof}
    It is standard that $\Par(\xi,\beta):[0,\infty]\to [0,\infty)$ is continuous, increasing and bounded as a function of $\beta$ (see e.g. \cite{CS17}). 
    Applying \cref{cor:weak-universality-finite-beta} at large finite $\beta$ in the pure case to $\pm \vJ^{(p)}$, \cref{lem:positive-to-zero-temp} implies that $\oGS(\vJ^{(p)})\leq CN$ with probability $1-o(1)$.
    (Here we use the fact that either $\GS(\vJ)=\oGS(\vJ)$ or $\GS(-\vJ)=\oGS(\vJ)$.)
    Hence the same holds for $\oGS(\vJ)$ with general $\xi$ by the triangle inequality.
    Applying Lemma~\ref{lem:positive-to-zero-temp} for general $\xi$ and large $\beta<\infty$ finishes the proof.
\end{proof}

\subsection{Polynomially High Probability and Expectation Bounds}
\label{subsec:quantitative-rates-proof}

Here we obtain the quantitative bounds claimed in \cref{thm:main}.

\begin{proof}[Proof of \cref{thm:main}]
    Let $\wh \delta=\wh\delta_N=N^{-\frac{\eps}{100P}}$ and, similarly to before, decompose each entry as 
    \begin{align*}
    \vJ_{i_1,\dots,i_p}
    &=
    \vJ^{\msf{small}}_{i_1,\dots,i_p}
    +\underbrace{
    \big(
    \vJ_{i_1,\dots,i_p}\,\mbm{1}_{M/2\leq |\vJ_{i_1,\dots,i_p}|\leq \wh \delta\sqrt{N}}
    -\bbE[
    \vJ_{i_1,\dots,i_p}\, \mbm{1}_{M/2\leq|\vJ_{i_1,\dots,i_p}|\leq \wh \delta\sqrt{N}}
    ]
    \big)
    }_{\vJ_{i_1,\dots,i_p}^{(A)}}
    \\
    &\quad\quad
    -
    \underbrace{
    \big(
    \bbE[
    \vJ_{i_1,\dots,i_p}
    \,
    \mbm{1}_{|\vJ_{i_1,\dots,i_p}|> \wh \delta\sqrt{N}}
    ]
    \big)
    }_{\vJ_{i_1,\dots,i_p}^{(B)}}
    +
    \underbrace{
    \big(\vJ_{i_1,\dots,i_p}\cdot \mbm{1}_{|\vJ_{i_1,\dots,i_p}|> \wh \delta\sqrt{N}}\big)
    }_{\vJ_{i_1,\dots,i_p}^{(C)}}.
    \end{align*}

    The proof of \cref{cor:weak-universality-finite-beta} shows that in fact,
    \[
    \bbP[|F_{\beta}(\vJ)-\bbE[F_{\beta}(\vG)]|\leq N^{1-c}]\geq 1-N^{-c}
    \]
    for some $c\geq \Omega_P(\eps)$.
    Note that if we simply redefine $P$ to $25P$ (by setting $\gamma_{P+1}=\dots=\gamma_{25P}=0$) then $\vJ^{(A)}$ becomes $\vJ^{\msf{large}}+\sum_{j} \vJ^{\msf{scale}(j)}$.
    Hence \cref{lem:large-rem,lem:bound} imply
    \[
    \bbP[|\oGS(\vJ^{(A)})|\leq N^{1-c}]\geq 1-N^{-c}.
    \]
    Note $\|\vJ^{\msf{small}}+\vJ^{(A)}\|_{\infty}\leq N^{\frac{1}{2}-c}$ almost surely. Thus \cref{lem:concen-bound} implies (by slightly adjusting $c$):
    \begin{equation}
    \label{eq:talagrand-J-A}
    \bbP[|F_{\beta}(\vJ^{\msf{small}}+\vJ^{(A)})-\bbE[F_{\beta}(\vG)]|\geq (t+1)N^{1-c}]
    \leq 
    e^{-t^2 N^{1-3c}},\quad\forall t\geq 0
    .
    \end{equation}
    We next show $\vJ^{(B)}$ and $\vJ^{(C)}$ are negligible. 
    By \eqref{eq:Markov-2} with $2p$ replaced by $2p+\eps$, we have 
    \[
    \|\vJ^{(p),(B)}\|_{\infty}
    \leq 
    CN^{-p+\frac{1-\eps}{2}}N^{\frac{\eps}{50}}
    \leq 
    CN^{-p + \frac{1}{2} - \frac{\eps}{3}}.
    \]
    Then almost surely by Cauchy--Schwarz, for all $\bsig\in\cS_N$:
    \[
    N^{-(p-1)/2}\la \vJ^{(p),(B)},\bsig^{\otimes p}\ra
    \leq 
    N^{-(p-1)/2} \la \vJ^{(p),(B)},\vJ^{(p),(B)}\ra^{1/2} \|\bsig\|^{p}
    \leq 
    CN^{1-\frac{p}{2}-\frac{\eps}{3}}.
    \]
    Thus $\vJ^{(B)}$ has deterministically negligible contribution.
    For $\vJ^{(C)}$, we similarly have
    \begin{align*}
    \big|\bbE\big[
    (\vJ_{i_1,\dots,i_p}^{(p),(C)})^2\big]\big|
    &\ll
    N^{-p + 1 - \frac{\eps}{3}}
    \\
    \implies 
    \bbE[\|\vJ^{(p),(C)}\|_2^2]
    &\ll 
    N^{1 - \frac{\eps}{3}}
    \\
    \implies 
    N^{-(p-1)/2}
    \,
    \bbE[\sup_{\bsig\in\cS_N} \la \vJ^{(p),(C)},\bsig^{\otimes p}\ra]
    &\ll
    N^{1-\frac{\eps}{6}}.
    \end{align*}
    Combining the bounds on $\vJ^{(B)}$ and $\vJ^{(C)}$, we find that 
    \[
    \bbE[F_{\beta}(\vJ)-F_{\beta}(\vJ^{\msf{small}}+\vJ^{(A)})]\ll_P N^{1-\frac{\eps}{6}},\quad\forall\, \beta\in [0,\infty].
    \]
    Combining with \eqref{eq:talagrand-J-A} completes the proof.
\end{proof}

\subsection{Necessity of Moment Bounds in Theorem~\ref{thm:bai-yin-var}}
\label{subsec:moment-bounds-necessary}

The proof here is similar to \cite{BY88}.

\begin{proof}[Proof of Theorem~\ref{thm:bai-yin-var}, Final Assertion]
We assume $\nu_p$ has infinite $2p$-th moment and show that almost surely, 
\begin{equation}
\label{eq:bai-yin-reverse}
\limsup\limits_{N\to\infty}\GS(\vJ)/N=\infty.
\end{equation}
The other direction then follows by applying this result to $-\vJ$, which satisfies the same assumptions.

Fix a large constant $\oC=\oC(p)>0$, and let $1\leq j\leq p$ be the smallest value such that $\nu_p$ has infinite $2j$-th moment.
Note that there exist $\Theta(N^j)$ sequences $1\leq i_1\leq\dots\leq i_p\leq N$ with $j$ distinct indices, and the same is true if we require $i_1<i_2$.
Then it is well-known that almost surely, there are infinitely many $J_{i_1,\dots,i_p}$ with $j$ non-zero entries, $i_1<i_2$, and 
\[
|J_{i_1,\dots,i_p}|\geq i_p^{1/2p}\cdot 10^{p^j}\cdot \oC.
\]
By minimality of $j$, this is not the case with $j$ replaced by $j-1$, even with $\oC$ replaced by $1/\oC$.
Similarly it is not the case if one requires the existence of two distinct such entries $J_{i_1,\dots,i_p}\neq J_{i_1',\dots,i_p'}$ supported in the same subset of $j$ coordinates, even with $\oC$ replaced by $1/\oC$.

Thus, consider such an entry $J_{i_1,\dots,i_p}$ with $j$ distinct values, $i_1<i_2$, and with $i_p$ sufficiently large.
Then take $\bsig\in\cS_N$ to have $\sigma_{i_1}=sign\big(J_{i_1,\dots,i_p}\big)\sqrt{N/j_*}$, and $\sigma_{i_k}=\sqrt{N/j_*}$ for $2\leq k\leq p$, with all other coordinates zero. It directly follows that 
\[
    H_{i_p}^{(p)}(\bsig)/N\geq \Omega_p(\oC).
\]
This comes from the $J_{i_1,\dots,i_p}$ contribution, whereas all other terms contribute at lower order thanks to the observations above based on minimality of $j$.

Next for each $p'\neq p$, the conditional law of $H_{i_p}^{(p')}(\bsig)/N$ does not depend on $(i_p,\bsig)$ and has uniformly bounded second moment by \eqref{eq:gaussian-covariance}. In particular it is tight. 
Therefore 
\[
\limsup\limits_{N\to\infty}\GS(\vJ)/N\geq \Omega_p(\oC)
\]
holds almost surely. 
Since $\oC$ was arbitrary, we conclude \eqref{eq:bai-yin-reverse}.
\end{proof}

\section{Extensions}
\label{sec:extensions}

Here we give extensions to multi-species models, tensor PCA, and $\ell^q$ balls for $q>2$. All follow by essentially the same proof. For simplicity we often restrict to the case of $(C,\eps)$-moment bounds.

Fix a species set $\sS = \{1,\ldots,r\}$ with weights $\vlam = (\lambda_1,\ldots,\lambda_r) \in \bbR_{>0}^\sS$. 
For $N\in\bbZ_+$, fix a deterministic partition $\{1,\ldots,N\} = \bigsqcup_{s\in\sS}\, \cI_s$ with $\lim_{N\to\infty} N_s / N =\lambda_s$ for $N_s=|\cI_s|$.
For $s\in \sS$ and $\bx \in \bbR^N$, let $\bx_s \in \bbR^{\cI_s}$ denote the restriction of $\bx$ to coordinates $\cI_s$.
The domain of a multi-species spherical spin glass is the product of spheres
\begin{equation}
\label{eq:def-cS}
    \cS_{N,r} = \lt\{
        \bx \in \bbR^N : 
        \snorm{\bx_s}_2^2 = \lambda_s N
        \quad\forall~s\in \sS
    \rt\}
    \simeq \prod_{s=1}^r \cS_{N_s}.
\end{equation}
We let $\mu_{N,r}=\prod_{s=1}^r \mu_{N_s}$ be uniform measure on $\cS_{N,r}$.
For each $1\le p\le P$ fix a symmetric tensor $\Gamma^{(p)} = (\gamma_{s_1,\ldots,s_p})_{s_1,\ldots,s_p\in \sS} \in (\bbR_{\ge 0}^r)^{\otimes p}$, and let the disorder $\vJ^{(p)}\in\bbR^{N^p}$ be as in the main body.

Then a multi-species spin glass Hamiltonian takes the form
\begin{align*}
    H_N(\bsig)
    = 
	\sum_{p \ge 1}
	\fr{1}{N^{(p-1)/2}}
	\sum_{i_1,\ldots,i_k=1}^N 
	\gamma_{s(i_1),\ldots,s(i_p)} \vJ^{(p)}_{i_1,\ldots,i_p} \sigma_{i_1}\cdots \sigma_{i_p}
\end{align*}
for $\bsig = (\sigma_1,\ldots,\sigma_N) \in \cS_{N,r}$.

Just as ordinary spherical spin glasses encompass the maximum eigenvalue of a symmetric matrix, multi-species models encompass the maximum singular value of a rectangular matrix. Notably, the Parisi formula is not known for general multi-species models; even the existence of the limiting free energy is open.
See e.g. \cite{barra2015multi,panchenko2015free,subag2021tap1,subag2021tap2} for recent progress.
Nonetheless we still obtain universality.

\begin{theorem}\label{thm:multi-species}
Fix $\xi,\vlam,\Gamma$ and $\eps\in (0,1)$ and $C\ge 1$. Consider disorder $J_{i_1,\ldots,i_p}$ obeying $(C,\eps)$-moment bounds. There exists $c\geq \Omega_P(\eps)$ such that for large $N$, uniformly in $\beta\in [0,\infty]$:
\begin{align*}
    \bbE[|F_{\beta}(\vJ;\mu_{N,r})-\bbE F_{\beta}(\vG;\mu_{N,r})|]&\leq N^{1-c}.
\end{align*}
\end{theorem}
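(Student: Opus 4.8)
The plan is to transcribe the proof of \cref{thm:main} almost line for line, replacing the sphere $\cS_N$ by the product of spheres $\cS_{N,r}$ and the rotational invariance of $\cS_N$ by invariance under the product group $\prod_{s\in\sS}O(N_s)$ acting blockwise. The one structural input that makes this possible is that the Gaussian multi-species Hamiltonian has covariance
\[
\mb{E}[H_N(\bsig)H_N(\bsig')]=\sum_{p=1}^P N^{1-p}\sum_{s_1,\dots,s_p\in\sS}\gamma_{s_1,\dots,s_p}^2\prod_{\ell=1}^p\la\bsig_{s_\ell},\bsig'_{s_\ell}\ra,
\]
which depends only on the blockwise overlaps $(\la\bsig_s,\bsig'_s\ra)_{s\in\sS}$; hence it is invariant in law under $\prod_s O(N_s)$, exactly as $\cS_{N,r}$ is. All implicit constants may now additionally depend on $\vlam$ and $\Gamma$, which costs nothing since the coefficients $\gamma_{s(i_1),\dots,s(i_p)}$ are uniformly bounded. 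Since the theorem only asserts closeness of $F_\beta(\vJ;\mu_{N,r})$ to $\bbE F_\beta(\vG;\mu_{N,r})$ --- not convergence to a variational formula, which is anyway unknown here --- one never needs to invoke a Parisi-type theorem; the transcribed argument produces exactly this comparison.

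Most of the argument transfers verbatim. The truncation of \cref{sec:trunc} --- including \cref{lem:tail-rem,lem:large-rem,lem:bound,lem:diff-bound} --- only controls $\oGS$-type suprema through $\|\bsig\|_2$ and $\|\bsig\|_p$ bounds on the relevant vectors, so since $\cS_{N,r}\subseteq\cS_N$ everything holds a fortiori and one again reduces to $\vJ^{\msf{mod}}$ obeying \eqref{eq:truncated-disorder-properties}. In \cref{sec:decomp} I would replace $\mu_N$ by $\mu_{N,r}$: \cref{fct:small-net} is unchanged (a point of $\cS_{N,r}$ has at most $N/T^2$ coordinates of size $\ge T$), and for each net point $w$ the measure $\mu_w^\ast$ is now uniform on a \emph{product} of subspheres --- within each block $\cI_s\setminus\on{supp}(w)$ a sphere of the radius forced by $\|w_s\|_2$, discarding $w$ for which $\|w_s\|_2>\sqrt{\lambda_s N}$ for some $s$ --- and \cref{lem:rnd-point} goes through with its proof applied blockwise, after which \cref{lem:meas-decomp} is unchanged. \cref{sec:add-input} also applies verbatim: \cref{lem:interpolation-on-sphere} is an entrywise third-moment estimate for measures supported on $\{x:x_i=w_i\ \forall i\in I,\ |x_i|\le T\ \forall i\notin I\}$, which includes these product-of-subspheres measures, and \cref{lem:concen-bound} holds since $\cS_{N,r}\subseteq\cB_N$ keeps $\vJ\mapsto F_\beta(\vJ;\mu)$ convex and $\sqrt N$-Lipschitz.

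The content of \cref{sec:bound-dis} is where the blockwise form of invariance is genuinely used. For the lower bound I would run \cref{lem:deloc} with $\mc{S}_{\text{Deloc}}$ replaced by $\cS_{N,r}\cap\{\|x\|_\infty\le N^{1/32}\}$ (still a $1-N^{-\omega(1)}$ fraction of $\mu_{N,r}$) and the Haar average taken over $\prod_s O(N_s)$; the external field splits as $\sum_s\gamma^{(1)}_s\sum_{i\in\cI_s}J_i\sigma_i$ and so, by blockwise invariance, enters only through $(\|\vJ^{(1)}_s\|_2)_{s\in\sS}$, each concentrating near $\sqrt{N_s}$, and the rest of the proof of \eqref{eq:second-item} goes through. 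For the upper bound I would adapt \cref{lem:grad-estim} blockwise --- the restriction of $\|P_{I_w}^\perp\nabla H_N(w)\|_2^2$ to $\cI_s$ has mean $N_s$ times a deterministic function of $(\|w_{s'}\|_2^2/N)_{s'\in\sS}$ and concentrates by Bernstein --- and then run \cref{lem:invar-upper-bound} as written: Taylor-expand $H_N$ around $w$, replace $\nabla H_N(w)$ by a deterministic $z$ with matching blockwise norms, apply \cref{lem:interpolation-on-sphere} to the $p-j\ge2$ terms, bound the $p-j=0$ term by a union bound over $\mc{N}_{T,\iota}$, and use blockwise $O(N_s)$-invariance of $\vG$ to see that $\mb{E}[F_\beta(\vG,z;\mu_w^\ast)]$ depends only on $(\|z_s\|_2)_s$ and hence equals $\mb{E}[F_\beta(\vG;w+\mu_w^\ast)]$ up to the usual lower-order errors (the same blockwise invariance and Jensen give $\mb{E}[F_\beta(\vG;w+\mu_w^\ast)]\le\mb{E}[F_\beta(\vG)]$ as in \cref{cor:weak-universality-finite-beta}). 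Assembling exactly as in \cref{cor:weak-universality-finite-beta,subsec:quantitative-rates-proof} gives the bound for $\beta<\infty$, and \cref{lem:positive-to-zero-temp} (a deterministic statement) together with the a priori estimate $\sup_{\bsig\in\cS_{N,r}}|H_N(\bsig)|\le O(N)$ with high probability --- itself obtained from the finite-$\beta$ case applied to $\pm\vJ$ as in \cref{cor:positive-to-zero-temp} --- extends it to $\beta=\infty$.

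The genuinely new work, and the step I expect to be the main obstacle, is establishing the blockwise analogs of the single-sphere ingredients of \cref{sec:bound-dis}: the $\prod_s O(N_s)$-invariance of the Gaussian multi-species Hamiltonian (immediate from the covariance formula above), the blockwise norm concentration of $P_{I_w}^\perp\nabla H_N(w)$ with the correct deterministic limits, and the reduction of the external field to the statistics $(\|\vJ^{(1)}_s\|_2)_{s\in\sS}$. Once these are in place, every remaining step is a routine transcription of \cref{sec:trunc,sec:decomp,sec:add-input,sec:bound-dis} with $\cS_N$ replaced by $\cS_{N,r}$.
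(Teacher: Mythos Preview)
Your proposal is correct and follows essentially the same route as the paper: truncate as before, decompose $\cS_{N,r}$ into delocalized product-of-subspheres pieces, Lindeberg-exchange on each, and replace single-sphere rotational invariance by invariance under $\prod_s O(N_s)$, with the projected gradient norm estimated blockwise. The only cosmetic difference is that the paper organizes the decomposition via separate nets $\cN_{T,\iota,s}$ within each species and product points $(w_1,\dots,w_r)$, whereas you keep a single net on $\cB_N$ and discard those $w$ violating some block constraint $\|w_s\|_2>\sqrt{\lambda_s N}$; these are equivalent.
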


The proof is essentially identical to the single-species case.
The truncation in Section~\ref{sec:trunc} requires no modification since the species only affect the disorder up to constant factor scaling. 
Similarly the interpolation between $\vJ$ and $\vG$ works in the same way on delocalized coordinates.
One defines $\cN_{T,\iota,1},\dots,\cN_{T,\iota,r}$ as before separately within each $\cS_{N_j}$ and works on product points $(w_1,\dots,w_r)\in \prod_{j=1}^r \cN_{T,\iota,r}$.
Similarly the projected gradient norm in \cref{lem:grad-estim} is now estimated within each species, and the symmetry of the Gaussian disorder is by a product of $r$ orthogonal groups.

\subsection{Ground State Energy on $\ell^q$ Balls}

A recent paper \cite{chen2023ell} considered the ground state of $H_N$ with Gaussian disorder in case of quadratic $H_N$ on an $\ell^q$ ball 
\[
    \cB_{N,q}=\{x\in\bbR^N~:~\|x\|_q^q=N\}.
\]
The behavior of the near-optima was shown to depend strongly on $q$: for $2\leq q\leq \infty$ the ground state is delocalized, while for $1<q<2$ it is localized on coordinates with unusually large entries.
In \cite[Open Problem 5]{chen2023ell}, it was asked whether universality holds for random matrices $\vJ^{(2)}$ with IID subgaussian disorder in the delocalized regime. 
Our methods resolve this question for $q\in (2,\infty)$.
Under the same moment conditions as our main results, we obtain universality for the $\ell^q$ ground state in mixed $p$-spin models without external field; the latter condition is in some sense necessary for $q\neq 2$ as explained in \cref{rem:p-j=1}.
Given that \cite{Cha05} showed only 2nd moment bounds are needed when $q=\infty$, we expect the optimal moment conditions to vary continuously with $q$. We did not attempt refinements in this direction.

\begin{theorem}
\label{thm:l-q-ground-state}
    Fix $q\in (2,\infty)$. Let $H_N(\cdot,\vJ)$ be as in \eqref{eq:HN-def} with $\gamma_1=0$, where condition \eqref{eq:lee-yin} holds for $\vJ$, and let $H_N(\cdot,\vG)$ have independent Gaussian disorder. 
    Then
    \[
    \plim_{N\to\infty}
    \frac{1}{N}
    \Big|
    \GS(\vJ;\cB_{N,q})
    -
    \GS(\vG;\cB_{N,q})
    \Big|
    =
    0.
    \]
    Here $\GS(\vJ;\cB_{N,q})=\max_{x\in\cB_{N,q}}H_N(x;\vJ)$ and similarly for $\vG$.
\end{theorem}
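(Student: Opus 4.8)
The plan is to transplant the proof of \cref{thm:main}, replacing $\cS_N$ by $\cB_{N,q}$; the geometric input that makes this work for $q>2$ is that every ``large-coordinate pattern'' of a point of $\cB_{N,q}$ is automatically negligible in $\ell^2$.

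First, the reduction of \cref{sec:trunc} — together with the modifications of \cref{rem:lee-yin} matching hypothesis~\eqref{eq:lee-yin} — carries over unchanged (with $\gamma_1=0$ the $p=1$ disorder never appears), reducing us to disorder $\vJ$ obeying \eqref{eq:truncated-disorder-properties}, so $\snorm{\vJ}_\infty\le\oM\le N^{1/64}$. Each $\oGS(\,\cdot\,;\cS_N)$ bound of \cref{lem:tail-rem,lem:large-rem,lem:bound} upgrades to an $\oGS(\,\cdot\,;\cB_N)$ bound via $\oGS(\vJ;\cB_N)\le\sum_p\oGS(\vJ^{(p)};\cS_N)$ (homogeneity of $H_N^{(p)}$ and $\snorm{\bsig}_2\le\sqrt N$ on $\cB_N$), so $\oGS(\vJ;\cB_N)=O(N)$ with probability $1-o(1)$, and likewise for Gaussian disorder by \cref{cor:positive-to-zero-temp}. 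As $q>2$ gives $\cB_{N,q}\subseteq\cB_N$, it remains to prove $\tfrac1N|\GS(\vJ;\cB_{N,q})-\GS(\vG;\cB_{N,q})|\to0$ in probability for disorder obeying \eqref{eq:truncated-disorder-properties}. Fix $T=N^{1/32}$, $\iota=N^{-16P}$ and $\beta=\log^2N$. As in \cref{fct:small-net} but with $T^q$ in place of $T^2$ (any $x\in\cB_{N,q}$ has at most $N/T^q$ coordinates $\ge T$ in absolute value), we obtain a net $\cN_{T,\iota}$ of size $\exp(O(N^{1-q/32}\log N))=e^{o(N)}$ such that, snapping the coordinates of $x\in\cB_{N,q}$ exceeding $T$ to the nearest net point $w$, the rest lies in $\cB_w:=\{v\in\bbR^{[N]\setminus\supp(w)}:\snorm v_q^q=N-\snorm w_q^q,\ \snorm v_\infty\le2T\}$ and $\GS(\vJ;\cB_{N,q})\le o(N)+\max_{w\in\cN_{T,\iota}}\GS(\vJ;w+\cB_w)$ (the snapping error is $o(N)$ exactly as in \cref{lem:rnd-point}\ref{it:rnd-point-2} and \cref{lem:meas-decomp}). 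The key point: for $q>2$ every such $w$ satisfies $\snorm w_2^2\le T^{2-q}\snorm w_q^q=O(T^{2-q}N)=o(N)$, since $\snorm w_q^q=O(N)$.

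Fix $w$ and $I=\supp(w)$. If $\snorm w_q^q\ge(1-N^{-(q-2)/64})N$ the sub-ball is near-degenerate: $\cB_w$ has $\ell^2$-diameter $o(\sqrt N)$ and $\mathrm{Var}\,H_N(w)=N\xi(\snorm w_2^2/N)=o(N)$ (as $\xi(0)=0$), whence $\GS(\vJ;w+\cB_w)=o(N)$; so assume otherwise. Write
\[
H_N(w+v)=H_N(w)+\langle\nabla H_N(w),v\rangle+\hH(v),\qquad \hH(v):=\sum_{p\ge2}\sum_{j=0}^{p-2}\binom pj\big\langle\vJ^{(p)}[I^j],\,w^{\otimes j}\otimes v^{\otimes(p-j)}\big\rangle,
\]
so $\hH$ contains exactly the terms of degree $\ge2$ in $v$. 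Using $\snorm w_2=o(\sqrt N)$ and — crucially — $\gamma_1=0$, hence $\xi'(0)=0$, the identities $\mathrm{Var}\,H_N(w)=N\xi(\snorm w_2^2/N)$ and $\bbE\|\nabla H_N(w)\|_2^2=N\xi'(\snorm w_2^2/N)$ (valid for any disorder obeying \eqref{eq:truncated-disorder-properties}, from the computation behind \eqref{eq:gaussian-covariance}) are both $o(N)$, so \cref{lem:concen-bound} and convexity of $\|\cdot\|_2$ give $|H_N(w)|,\|\nabla H_N(w)\|_2=o(\sqrt N)$ with probability $1-e^{-N^{\Omega(1)}}$. Hence the first two terms contribute $\le o(\sqrt N)+o(\sqrt N)\cdot\sqrt N=o(N)$ uniformly on $w+\cB_w$ and may be dropped — this is where $\gamma_1=0$ stands in for the rotational symmetry used on $\cS_N$, as the shift $w$ produces only a negligible effective external field. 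Iterating \cref{lem:interpolation-on-sphere} (whose proof applies verbatim to measures on $\cB_N$) with this $I$ and $2T$ in place of $T$, and noting every term of $\hH$ has $p-j\ge2$, we get $|\bbE\hF_\beta(\vJ;\mu_{\cB_w})-\bbE\hF_\beta(\vG;\mu_{\cB_w})|\ll_P\beta^2\oM(2T)^6N^{1/2}=o(N)$, where $\hF_\beta$ is the free energy of $\hH$. Converting between $\GS$ and $F_\beta$ on $\cB_w$ by the analogue of \cref{lem:positive-to-zero-temp} (its proof needs only that $\hH$ is $O(\sqrt N)$-Lipschitz and that $\mu_{\cB_w}$-balls of $\ell^2$-radius $r\sqrt N$ have measure $\ge e^{-CN\log(1/r)}$) and applying \cref{lem:concen-bound} once more, we obtain, with probability $1-e^{-N^{\Omega(1)}}$,
\[
\GS(\vJ;w+\cB_w)\le o(N)+\GS(\hH_{\vG};\cB_w)\le o(N)+\GS(\vG;w+\cB_w)\le o(N)+\GS(\vG;\cB_{N,q}),
\]
the middle step undoing the expansion for Gaussian disorder (again $\xi(0)=\xi'(0)=0$). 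Union-bounding over the $e^{o(N)}$ net points yields $\GS(\vJ;\cB_{N,q})\le\GS(\vG;\cB_{N,q})+o(N)$ with probability $1-o(1)$; the reverse inequality follows by running the identical chain starting from a net point $w^{\ast}$ with $\GS(\vG;\cB_{N,q})\le\GS(\vG;w^{\ast}+\cB_{w^{\ast}})+o(N)$, as each use of \cref{lem:interpolation-on-sphere,lem:concen-bound} and of the zero-temperature comparison is two-sided. Combined with the truncation step, this proves the theorem.

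The only genuinely new difficulty compared with \cref{thm:main} is the ``external field'' term $\langle\nabla H_N(w),v\rangle$ generated by the nonzero shift $w$ — the $p-j=1$ case of \cref{rem:p-j=1}, defused on $\cS_N$ by rotational invariance, which is unavailable on $\cB_{N,q}$. The resolution is the conjunction of $q>2$ (forcing $\snorm w_2=o(\sqrt N)$) and $\gamma_1=0$ (forcing $\xi'(0)=0$), which together make this term uniformly $o(N)$ and thus droppable; this is exactly why $\gamma_1=0$ cannot be removed when $q\neq2$. The remaining points — the near-degenerate sub-balls and verifying that all error and failure-probability estimates survive the union bound over $\cN_{T,\iota}$ — are routine once one notes that $q>2$ makes $|\cN_{T,\iota}|$ genuinely subexponential.
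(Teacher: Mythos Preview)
Your argument is essentially correct but takes a considerably more elaborate route than the paper. The paper observes that for $q>2$ one may simply truncate \emph{every} coordinate of $x\in\cB_{N,q}$ at level $N^{\eps}$: the $\ell^2$ cost of this truncation is $\big(\sum_{|x_i|\ge N^{\eps}}|x_i|^2\big)^{1/2}\le N^{1/2-\eps(q-2)/2}$, and on the event $\|\nabla H_N\|_{\cB_N}=O(\sqrt N)$ (which holds with probability $1-o(1)$ by \cref{prop:banach} and ground-state universality on $\cS_N$) this changes $H_N$ by $o(N)$ uniformly. On the truncated support, with $\gamma_1=0$, every interaction has degree $\ge 2$, so a \emph{single} global application of \cref{lem:interpolation-on-sphere} with $I=\emptyset$ already gives free-energy universality for any $\mu\in\cP(\cB_{N,q})$; one then passes to $\beta=\infty$ via a cube-embedding volume bound. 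No net decomposition, no effective external field at a center $w$, no union bound. Your approach instead transplants the full delocalized-subsphere machinery and uses the pair $q>2\Rightarrow\|w\|_2=o(\sqrt N)$, $\gamma_1=0\Rightarrow\xi(0)=\xi'(0)=0$ to kill the $w$-dependent constant and linear terms on each piece. Both proofs rest on the same two facts, but the paper exploits them earlier to avoid the decomposition entirely.

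Two places in your write-up need tightening. First, the claim that $|H_N(w)|,\|\nabla H_N(w)\|_2=o(\sqrt N)$ with probability $1-e^{-N^{\Omega(1)}}$ does not follow from \cref{lem:concen-bound} as you state it: the Lipschitz constant there is $\sqrt N$ regardless of the variance, and the exponent you get need not beat the net size uniformly in $q>2$. The clean fix is deterministic: on the single high-probability event $\oGS(\vJ^{(p)})=O(N)$ for each $p$, homogeneity gives $|H_N^{(p)}(w)|\le\oGS(\vJ^{(p)})(\|w\|_2/\sqrt N)^p=o(N)$ and $\|\nabla H_N^{(p)}(w)\|_2\le p\,\oGS(\vJ^{(p)})N^{-1/2}(\|w\|_2/\sqrt N)^{p-1}=o(\sqrt N)$ for $p\ge 2$, uniformly over all $w$. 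Second, your free-energy-to-ground-state conversion on $\cB_w$ asserts a volume lower bound for $\ell^2$-balls inside a restricted $\ell^q$-shell intersected with an $\ell^\infty$-box; this is true (and is exactly what the paper proves in \cref{lem:q-GS-approx} via the containment $(1-\delta)x+[-\delta,\delta]^{[N]\setminus I}\subseteq\cB_w$), but you should say so rather than defer to ``the analogue of \cref{lem:positive-to-zero-temp}''.
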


\cref{thm:l-q-ground-state} follows immediately from the next two lemmas.
Similarly to the main arguments, we show universality of free energy and then take $\beta$ large to approximate the ground state.
Thanks to the work we have already done, their proofs are now straightforward.
We let $\mu_{N,q}$ be the uniform probability measure on $\cB_{N,q}$.
We will use the fact that $[-1,1]^N=\cB_{N,\infty}\subseteq\cB_{N,q}\subseteq\cB_{N,2}$ and that all have $N$-dimensional volume $e^{\Theta(N)}$.

\begin{lemma}
\label{lem:q-free-energy-approx}
    Fix $q\in (2,\infty)$.
    For $\vJ$ as in \cref{thm:l-q-ground-state} and any $\beta\in [0,\infty)$ and $\mu\in\cP(\cB_{N,q})$:
    \[
    \plim_{N\to\infty}
    \frac{1}{N}
    \Big|
    F_{\beta}(\vJ;\mu)
    -
    F_{\beta}(\vG;\mu)
    \Big|
    =
    0.
    \]
\end{lemma}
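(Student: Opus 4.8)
The strategy runs parallel to the proof of \cref{cor:weak-universality-finite-beta}. The essential simplification is that every $x\in\cB_{N,q}$ satisfies $\snorm{x}_\infty\le\snorm{x}_q=N^{1/q}=o(\sqrt N)$, so on $\cB_{N,q}$ \emph{no} coordinate is ``large'' in the sense of \cref{sec:decomp}. The essential new difficulty is that $\cB_{N,q}$ lacks rotational symmetry, so the gradient term of the Hamiltonian must be controlled without it. First I would run the truncation of \cref{sec:trunc} verbatim: since $\gamma_1=0$ there is no external field, so only $(\delta,\eta,M)$ appear, and under \eqref{eq:lee-yin} one uses the sequences of \cref{rem:lee-yin}. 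The $\oGS$-type bounds there, stated on $\cS_N$, transfer to $\cB_{N,q}\subseteq\cB_{N,2}$ for free, since each $H_N^{(p)}$ is homogeneous of degree $p\ge2$ so that $\sup_{x\in\cB_{N,2}}|H_N^{(p)}(x)|=\sup_{x\in\cS_N}|H_N^{(p)}(x)|$. This reduces everything to disorder $\vJ^{\msf{mod}}$ obeying \eqref{eq:truncated-disorder-properties}, up to additive errors $o(N)$ uniform over $\mu\in\cP(\cB_{N,q})$. As $\cP(\cB_{N,q})\subseteq\cP(\cB_N)$, \cref{lem:concen-bound} applies unchanged, so $F_\beta(\vJ^{\msf{mod}};\mu)$ and $F_\beta(\vG;\mu)$ each concentrate within $o(N)$ of their means.

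Fix $T=N^\theta$ for a small $\theta>0$. As in \cref{sec:decomp}, each $x\in\cB_{N,q}$ has at most $N/T^q$ coordinates exceeding $T$ in absolute value, so the ``large parts'' admit a net $\cN_{T,\iota}$ with $\log|\cN_{T,\iota}|=o(N)$; this gives $\cB_{N,q}=\bigcup_{w\in\cN_{T,\iota}}\cC_w$ and, as in \cref{lem:meas-decomp,lem:rnd-point}, the approximation $Z_\beta(\vJ^{\msf{mod}};\mu)\approx\sum_{w}\mu(\cC_w)\,Z_\beta(\vJ^{\msf{mod}};w+\mu_w)$, valid for $\vG$ as well, where $\mu_w$-a.e.\ point has its coordinates on $I_w:=\on{supp}(w)$ frozen near $w$ and all other coordinates at most $2T$. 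Exactly as in the proof of \cref{cor:weak-universality-finite-beta} (using \cref{lem:meas-decomp,lem:concen-bound} and a union bound over the $e^{o(N)}$ points $w$), it then suffices to prove $\bbE[F_\beta(\vJ^{\msf{mod}};w+\mu_w)]=\bbE[F_\beta(\vG;w+\mu_w^\ast)]+o(N)$ for each fixed $w$. Expand $H_N(v)$ around $w$ into terms indexed by the number $j$ of coordinates lying in $I_w$, as in \cref{lem:invar-upper-bound}. Every term with $j\le p-2$ is handled by \cref{lem:interpolation-on-sphere}, whose proof uses only $\snorm{w}_2\le\sqrt N$ and the bound $2T$ on the free coordinates, hence applies with $\cS_N$ replaced by $\cB_{N,q}$, with total error $\ll_P C(\beta)M N^{3/2}(T^3/\sqrt N)^2=o(N)$; the all-frozen term ($j=p$), $N^{-(p-1)/2}\la\vJ^{(p)}[I_w^p],w^{\otimes p}\ra$, is a weighted sum of bounded independent entries, disposed of by concentration and the union bound; and once the Hamiltonian is fully Gaussian it coincides with $v'\mapsto H_N(w+v';\vG)$ up to an additive constant, so \cref{lem:rnd-point}\ref{it:rnd-point-4} passes from $w+\mu_w$ to $w+\mu_w^\ast$.

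What remains, and is the main obstacle, is the term $\la z,v'\ra$ with $v'=v-w$ and $z=P_{I_w}^\perp\nabla H_N(w)$ (supported on $I_w^c$), which \cref{lem:invar-upper-bound} treated using rotational invariance of $\mu_w^\ast$; that is unavailable on $\cB_{N,q}$. I would fix a small $\rho=\rho(q)\in(0,\tfrac{1}{2}-\tfrac{1}{q})$ and argue by a dichotomy on $\snorm{w}_2$, using that $z_i$ is independent over $i\in I_w^c$, that $\Var(z_i)\ll_\xi\snorm{w}_2^2/N$ (here $\gamma_1=0$, i.e.\ $\xi'(0)=0$, is used), and that $\snorm{z}_2\ll_{P,\xi}\snorm{w}_2+o(N^{1/2})$ with probability $1-N^{-\omega(1)}$ by \cref{lem:grad-estim} (which depends on $w$ only). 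If $\snorm{w}_2\le N^{1/2-\rho}$, then $\snorm{z}_2\ll_{P,\xi}N^{1/2-\rho}+o(N^{1/2})$, so with $q'=q/(q-1)$ this term contributes at most
\[
\sup_{v'}\la z,v'\ra\le\snorm{z}_{q'}\,\snorm{v'}_q\le\big(N^{1/q'-1/2}\snorm{z}_2\big)\cdot 2N^{1/q}=2N^{1/2}\snorm{z}_2=o(N).
\]
If $\snorm{w}_2> N^{1/2-\rho}$, then each coordinate $z_i$ is a sum of independent mean-zero terms, each of magnitude at most $N^{-\Omega_q(1)}\sqrt{\Var(z_i)}$ (this is where $\rho<\tfrac{1}{2}-\tfrac{1}{q}$ enters), so a quantitative central limit theorem (a $W_1$ Berry--Esseen bound, upgraded to $W_2$ via the sub-Gaussian tails of $z_i$ and of its Gaussian counterpart $z_i'$) gives $W_2\big(\Law(z_i),\Law(z_i')\big)\le N^{-\Omega_q(1)}\sqrt{\Var(z_i)}$, where $z_i'$ is the corresponding coordinate of the $\vG$-projected gradient. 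Coupling each $(z_i,z_i')$ by the monotone rearrangement (optimal for every $W_r$ in one dimension), and using that $u\mapsto\la u,v'\ra$ is $2N^{1/q}$-Lipschitz on $(\bbR^{I_w^c},\snorm{\cdot}_{q'})$ since $\sup_v\snorm{v'}_q\le 2N^{1/q}$, one obtains $\bbE[\,|F_\beta(\vJ^{\msf{mod}};w+\mu_w)-F_\beta(\vG;w+\mu_w^\ast)|\,]\ll_{P,\beta}N^{1/q}\,\bbE\snorm{z-z'}_{q'}+o(N)\le 2N^{1/q}N^{1/q'}N^{-\Omega_q(1)}+o(N)=o(N)$, because $N^{1/q}N^{1/q'}=N$ and $\Var(z_i)\ll_\xi 1$. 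Summing over $w\in\cN_{T,\iota}$ and combining with the previous two paragraphs yields $\bbE[F_\beta(\vJ^{\msf{mod}};\mu)]=\bbE[F_\beta(\vG;\mu)]+o(N)$, hence the claim, with all constants allowed to depend on the fixed $\beta<\infty$. The hypothesis $\gamma_1=0$ enters decisively in the first branch (it forces $\Var(z_i)\to0$ as $\snorm{w}_2\to0$), which is exactly the mechanism by which, as noted in \cref{rem:p-j=1}, a nonzero external field would break universality for $q\neq2$.
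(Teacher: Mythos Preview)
Your approach is substantially more elaborate than the paper's, and while the core ideas are salvageable, you have missed a dramatic simplification and introduced some sphere-specific steps that do not transfer cleanly.

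The paper's proof is essentially three lines. The key observation is that for any $x\in\cB_{N,q}$ one has
\[
\sum_{|x_i|\ge N^{\eps}}|x_i|^2 \le N^{\eps(2-q)}\sum_i |x_i|^q \le N^{1-\eps(q-2)},
\]
so coordinate-wise truncation at level $N^{\eps}$ moves $x$ by at most $N^{(1-\eps(q-2))/2}$ in $\ell^2$. Combined with the a priori gradient bound $\sup_{z\in\cB_{N,2}}\|\nabla H_N(z)\|\le C\sqrt N$ (which follows from \cref{prop:banach} and the already-established universality of $\GS$ on $\cS_N$), this shows that replacing $\mu$ by its pushforward $\mu_*$ under this truncation changes the free energy by $o(N)$. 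On the support of $\mu_*$ every coordinate is at most $N^{\eps}$, and since $\gamma_1=0$ every interaction has $p\ge 2$, so \cref{lem:interpolation-on-sphere} with $w=0$, $I=\emptyset$ applies directly. No decomposition, no net, no gradient term.

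Your route instead re-runs the full sphere machinery on $\cB_{N,q}$: net the large coordinates, freeze them at $w$, interpolate on the $j\le p-2$ part, and handle the projected gradient $z$ by a new CLT-coupling argument with a dichotomy on $\|w\|_2$. The CLT-coupling idea is sound and the estimates you sketch do close (each summand in $z_i$ is $\ll N^{-\Omega_q(1)}\sqrt{\Var(z_i)}$ when $\|w\|_2\ge N^{1/2-\rho}$, and $\|z\|_2$ is small otherwise because $\xi'(0)=0$). However, two points need fixing. First, your invocations of $\mu_w^\ast$ and \cref{lem:rnd-point}\ref{it:rnd-point-4} are misplaced: those are sphere-specific (they rely on the subsphere structure and rotational invariance), and $\cB_{N,q}$ has no analog. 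Since the target statement compares $F_\beta(\vJ;\mu)$ and $F_\beta(\vG;\mu)$ for the \emph{same} $\mu$, you should simply keep the conditional measure $\mu|_{\cC_w}$ on both sides throughout; the passage to $\mu_w^\ast$ is unnecessary. Second, \cref{lem:rnd-point,lem:meas-decomp} use the rescaling $x_w$ to land back on $\cS_N$, which has no direct analog on $\cB_{N,q}$; you would need to reformulate these (e.g.\ work directly with the conditional law on $\cC_w$ without any rescaling). These are repairable, but the resulting argument is an order of magnitude longer than the paper's, which leverages the earlier work (specifically the gradient bound from spherical universality) rather than redoing it.
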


\begin{proof}
    First since $\cB_{N,q}\subseteq\cB_{N,2}$, we may apply the same truncation as in the main argument, and thus assume the properties \eqref{eq:truncated-disorder-properties}.
    Choose $\eps>0$ small depending on $q$ and consider the function $\phi_{N,\eps}(x)=\min(\max(x,-N^{\eps}),N^{\eps})$. 
    Let $\mu_*$ be the push-forward of $\mu$ under the entry-wise mapping $x=(x_1,\dots,x_N)\mapsto \vec\phi_{N,\eps}(x)=\big(\phi_{N,\eps}(x_1),\dots,\phi_{N,\eps}(x_N)\big)$.
    \cref{prop:banach} and the universality of $\GS(H_N)$ on the $\ell^2$ sphere for pure models together imply that with probability $1-o(1)$, 
    \begin{equation}
    \label{eq:deriv-bound}
    \max_{\|z\|\in\cB_{N,2}} \|\nabla H_N(z;\vJ)\|\leq C\sqrt{N}
    \end{equation}
    for $C$ depending only on the disorder distributions $\nu_p$.
    Note that $\|x-\vec\phi_{N,\eps}(x)\|_2^2\leq \sum_{|x_i|\ge N^{\eps}}|x_i|^2\le N\cdot N^{-\eps\cdot (q-2)}$ for all $x\in\cB_N$. Hence on the event \eqref{eq:deriv-bound}, 
    \[
    F_{\beta}(\vJ;\mu)-F_{\beta}(\vJ;\mu_*)\leq N^{1-\eps\cdot (q-2)/2}.
    \]
    The same holds for $\vG$. 
    Further since we assume $\gamma_1=0$, interpolation as in \cref{lem:interpolation-on-sphere} gives
    \[
    \bbE[F_{\beta}(\vJ;\mu_*)-F_{\beta}(\vG;\mu_*)]\leq N^{1-\eps}.
    \]
    Finally \cref{lem:concen-bound} shows concentration of both $F_{\beta}(\vG;\mu_*)$ and to $F_{\beta}(\vJ;\mu_*)$, the latter since we assume \eqref{eq:truncated-disorder-properties} applies.
    Combining completes the proof.
\end{proof}

\begin{lemma}
\label{lem:q-GS-approx}
    For $\vJ$ as in \cref{thm:l-q-ground-state}, $q\in (2,\infty)$, and any $\eps>0$,
    \[
    \limsup_{N\to\infty}
    \limsup_{\beta\to\infty}
    \bbP\big[
    \big|
    F_{\beta}(\vJ;\mu_{N,q})
    -
    \GS(\vJ;\cB_{N,q})
    \big|
    \geq \eps N\big]
    =
    0.
    \]
\end{lemma}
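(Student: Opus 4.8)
The plan is to sandwich $F_{\beta}(\vJ;\mu_{N,q})$ between $\GS(\vJ;\cB_{N,q})$ and $\GS(\vJ;\cB_{N,q})-o_{\beta}(N)$ on a high-probability event, in exact parallel with \cref{lem:positive-to-zero-temp}. The upper bound is free: $H_N(x;\vJ)\le \GS(\vJ;\cB_{N,q})$ for all $x\in\cB_{N,q}$ and $\mu_{N,q}$ is a probability measure, so $F_{\beta}(\vJ;\mu_{N,q})\le\GS(\vJ;\cB_{N,q})$ for every $\beta$ and every realization.

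For the lower bound I would first record the a priori Lipschitz estimate \eqref{eq:deriv-bound}: by \cref{prop:banach} together with universality of $\oGS$ on the $\ell^2$ sphere for pure models — which holds under \eqref{eq:lee-yin} by \cref{prop:lee-yin}, cf.\ \cref{rem:lee-yin} — the event $\mathcal E_N:=\{\sup_{z\in\cB_N}\snorm{\nabla H_N(z;\vJ)}_2\le C\sqrt N\}$ has probability tending to $1$, with $C$ depending only on $\xi$ and the $\nu_p$. Since $\cB_{N,q}\subseteq\cB_N$ and $\cB_N$ is convex, on $\mathcal E_N$ the map $H_N(\cdot;\vJ)$ is $C\sqrt N$-Lipschitz along every segment inside $\cB_N$. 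Fixing a maximizer $x_*\in\cB_{N,q}$ and writing $U_r=\{x\in\cB_{N,q}:\snorm{x-x_*}_2\le r\sqrt N\}$, this gives $H_N(x;\vJ)\ge\GS(\vJ;\cB_{N,q})-CrN$ on $U_r$, hence
\[
F_{\beta}(\vJ;\mu_{N,q})\ \ge\ \GS(\vJ;\cB_{N,q})-CrN+\tfrac{1}{\beta}\log\mu_{N,q}(U_r).
\]

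The one genuinely geometric ingredient is a lower bound $\mu_{N,q}(U_r)\ge e^{-c(r)N}$ with $c(r)<\infty$ for each fixed $r>0$, uniform in $x_*$. I would obtain this by a standard cap-volume computation: write $\cB_{N,q}$ near $x_*$ as a graph over the coordinate hyperplane orthogonal to a \emph{largest-magnitude} coordinate $k$ of $x_*$. Since $\sum_i|(x_*)_i|^q=N$ forces $|(x_*)_k|\ge 1$, the graph function has gradient $\le 1+o(1)$ in a neighborhood, so the surface measure is comparable to Lebesgue measure on the shadow, and the shadow of $U_r$ contains a Euclidean $(N-1)$-ball of radius $\Omega(r\sqrt N)$; this yields $\mu_{N,q}(U_r)\ge e^{-c(r)N}$. (Choosing $k$ to be the largest coordinate is what keeps us away from the high-curvature regions of $\cB_{N,q}$ — this is the only place the argument is not completely routine.) Thus on $\mathcal E_N$, $0\le\GS(\vJ;\cB_{N,q})-F_{\beta}(\vJ;\mu_{N,q})\le CrN+c(r)N/\beta$.

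To finish: given $\eps>0$, pick $r=r(\eps)$ with $Cr\le\eps/4$; then for all $\beta\ge 4c(r)/\eps$ we have $|F_{\beta}(\vJ;\mu_{N,q})-\GS(\vJ;\cB_{N,q})|\le\eps N/2$ on $\mathcal E_N$, so $\bbP[|F_{\beta}(\vJ;\mu_{N,q})-\GS(\vJ;\cB_{N,q})|\ge\eps N]\le\bbP[\mathcal E_N^c]$. Hence $\limsup_{\beta\to\infty}$ of this probability is $\le\bbP[\mathcal E_N^c]$ for every $N$, and $\limsup_{N\to\infty}\bbP[\mathcal E_N^c]=0$ gives the claim. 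I expect the cap-volume bound on the non-Euclidean hypersurface $\cB_{N,q}$ to be the main (mild) obstacle; everything else is either trivial or already proven. A soft alternative even avoids it entirely: $\mu_{N,q}$ has full support and $H_N(\cdot;\vJ)$ is continuous, so $F_{\beta}(\vJ;\mu_{N,q})\uparrow\GS(\vJ;\cB_{N,q})$ as $\beta\to\infty$ for every realization, and bounded convergence gives $\bbP[|F_{\beta}-\GS|\ge\eps N]\to0$ as $\beta\to\infty$ for each fixed $N$; I include the quantitative route above since it matches the rest of the paper.
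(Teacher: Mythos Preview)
Your overall strategy---upper bound trivial, lower bound via the Lipschitz estimate \eqref{eq:deriv-bound} plus a volume (cap) lower bound---is exactly the paper's, and your soft alternative at the end is valid on its own.

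The one place you diverge is the cap-volume estimate. The paper works with $\cB_{N,q}$ as the \emph{solid} ball (note the surrounding text uses $[-1,1]^N\subseteq\cB_{N,q}\subseteq\cB_{N,2}$ and $N$-dimensional volume, despite the equality in the displayed definition). Its argument is then a one-liner: for any $x\in\cB_{N,q}$, the $\ell^\infty$-ball of radius $\delta$ centered at $(1-\delta)x$ lies in $\cB_{N,q}$ (since $\|(1-\delta)x+z\|_q\le(1-\delta)\|x\|_q+\|z\|_q\le(1-\delta)N^{1/q}+\delta N^{1/q}$ whenever $\|z\|_\infty\le\delta$) and also in the $\ell^2$-ball of radius $O(\delta)\sqrt N$ about $x$; hence $\mu_{N,q}(U_r)\ge e^{-K(r)N}$ immediately.

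Your hypersurface version reaches the same conclusion but the justification you sketch is not quite right as stated: for $x_*=(1,\ldots,1)$ the graph over the hyperplane $\{x_k=\text{const}\}$ has $|\partial x_k/\partial x_j|=(|x_j|/|x_k|)^{q-1}=1$ for every $j$, so $\|\nabla x_k\|_2=\sqrt{N-1}$, not $1+o(1)$. The shadow of $U_r$ also need not contain a full Euclidean ball of radius $\Omega(r\sqrt N)$ in that example (there is an additional slab constraint from $|x_k-1|$). The cap-volume bound is still true, but it needs a slightly different argument than the one you outline; the solid-ball route above avoids this entirely.
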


\begin{proof}
    Clearly $F_{\beta}(\vJ;\mu_{N,q})\leq \GS(\vJ;\cB_{N,q})$.
    To show the reverse estimate, we first claim that for any $\delta>0$ there exists $K$ such that for sufficiently large $N$, given any $x\in \cB_{N,q}$ the set 
    \[
    S_{q,x,\delta}=\{y\in\cB_{N,q}~:~\|x-y\|_2\leq \delta \sqrt{N}\}
    \]
    has $N$-dimensional volume at least $e^{-KN}$.
    Indeed because $[-1,1]^N\subseteq \cB_{N,q}$, it follows that $S_{q,x,\delta}$ contains the $\ell^{\infty}$ ball of radius $\delta$ centered at $(1-\delta)x$.
    On the high-probability event that \eqref{eq:deriv-bound} holds,
    \[
    F_{\beta}(\vJ;\mu_{N,q})/N
    \geq 
    \frac{1}{\beta N}
    \log
    \int_{\bsig\in S_{q,x,\delta}}
    e^{\beta H_N(\bsig)}
    \de\bsig
    \geq 
    H_N(x)
    -
    C\delta 
    -
    \frac{K}{\beta}.
    \]
    Taking small $\delta$, arbitrary $K(\delta)$, large enough $\beta$, and arbitrary $x$ concludes the proof.
\end{proof}

\subsection{Universality of Tensor PCA}

Here we study tensor PCA, focusing for convenience on the pure case.
For $\vJ^{(1)},\vJ^{(p)}$ as before, and with $\lambda\geq 0$ a signal strength, the Hamiltonian in question is 
\begin{equation}
\label{eq:tensor-PCA-hamiltonian}
    \wh H_N(\bsig)
    =
    H_N^{(p)}(\bsig)
    +
    \lambda N (\la \bsig,\vJ^{(1)}\ra/N)^p,
\end{equation}
corresponding to the spiked tensor $\wh J^{(p)}=J^{(p)}+\lambda (J^{(1)})^{\otimes p}$.
We let $\wh G^{(p)}$ be the corresponding spiked tensor for Gaussian disorder $\vG^{(1)},\vG^{(p)}$.

\begin{theorem}
\label{thm:tensor-PCA}
Assume $\vJ^{(1)},\vJ^{(p)}$ obey $(C,\eps)$-moment bounds. 
Then for $\beta\in [0,\infty]$, 
\[
\plim_{N\to\infty}
|F_{\beta}(\wh J^{(p)})-F_{\beta}(\wh G^{(p)})|/N = 0.
\]
\end{theorem}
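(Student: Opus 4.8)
The plan is to condition on the external-field disorder $\vJ^{(1)}$ and exploit that the spike term in \eqref{eq:tensor-PCA-hamiltonian} depends on $\bsig$ only through the scalar $\la\bsig,\vJ^{(1)}\ra$: it is therefore \emph{constant} on each level set $\{\bsig\in\cS_N:\la\bsig,\vJ^{(1)}\ra=s\}$. Slicing $\mu_N$ along these level sets reduces the claim to universality of the \emph{pure} $p$-spin free energy on such subspheres, which is exactly the content of \cref{sec:decomp,sec:add-input,sec:bound-dis}.

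First I would truncate both $\vJ^{(1)}$ and $\vJ^{(p)}$ as in \cref{sec:trunc}, replacing them by $\vJ^{(1),\msf{mod}},\vJ^{(p),\msf{mod}}$ obeying \eqref{eq:truncated-disorder-properties}. For $\vJ^{(p)}$ this changes $F_\beta$ by $o(N)$ as before; for $\vJ^{(1)}$, one notes $|\la\bsig,\vJ^{(1)}-\vJ^{(1),\msf{mod}}\ra|\le\sqrt N\,\|\vJ^{(1)}-\vJ^{(1),\msf{mod}}\|_2\le N^{1-c}$ uniformly over $\bsig\in\cS_N$ with probability $1-o(1)$ by the $2+\eps$ moment bound (\cref{lem:Markov}), while $\la\bsig,\vJ^{(1)}\ra/N$ and $\la\bsig,\vJ^{(1),\msf{mod}}\ra/N$ are $O(1)$, so the spike changes by $O(\lambda N^{1-c})$ and $F_\beta(\wh J^{(p)})=F_\beta(\wh J^{(p),\msf{mod}})+o(N)$. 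After harmlessly increasing $P$ we may assume $\|\vJ^{(1),\msf{mod}}\|_\infty\le N^{1/64}$ and $\big|\|\vJ^{(1),\msf{mod}}\|_2-\sqrt N\big|=o(\sqrt N)$ with probability $1-N^{-\omega(1)}$. Conditionally on $\vJ^{(1),\msf{mod}}$, write $\mu_N=\int\mu_s\,\de\rho(s)$ with $\mu_s$ uniform on $\cS_s:=\{\bsig\in\cS_N:\la\bsig,\vJ^{(1),\msf{mod}}\ra=s\}$ and $\rho$ the law of $\la\bsig,\vJ^{(1),\msf{mod}}\ra$ under $\mu_N$; since the spike equals $\lambda N(s/N)^p$ on $\cS_s$,
\[
Z_\beta(\wh J^{(p),\msf{mod}})=\int e^{\beta\lambda N(s/N)^p}\,Z_\beta(\vJ^{(p),\msf{mod}};\mu_s)\,\de\rho(s).
\]
The subsphere $\cS_s$ is centered at $w_s=(s/\|\vJ^{(1),\msf{mod}}\|_2^2)\,\vJ^{(1),\msf{mod}}$, which satisfies $\|w_s\|_\infty\le N^{1/32}$, i.e.\ its center is delocalized. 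I would then rerun \cref{sec:decomp,sec:add-input,sec:bound-dis} with $\cS_s$ in place of $\cS_N$: decompose $\cS_s$ into subexponentially many pieces by localized-coordinate pattern, apply the Lindeberg exchange of \cref{lem:interpolation-on-sphere} to $H_N^{(p)}$ on each delocalized piece (there is no genuine external field), and use rotational symmetry within the ambient subspace $(\vJ^{(1),\msf{mod}})^{\perp}$ to absorb the $p-j=1$ ``effective external field'' terms exactly as in \cref{lem:deloc,lem:invar-upper-bound}. With \cref{lem:concen-bound} this yields, for each fixed $s$, $|F_\beta(\vJ^{(p),\msf{mod}};\mu_s)-\bbE F_\beta(\vG^{(p)};\mu_s^{\ast})|\le N^{1-c}$ with probability $\ge 1-e^{-N^{1-2c}}$, where $\mu_s^{\ast}$ is uniform on a standard subsphere of the same radius; by full rotational invariance of $\vG^{(p)}$ (independent of $\vG^{(1)}$) this coincides with the corresponding Gaussian quantity for the level sets of $\la\bsig,\vG^{(1)}\ra$. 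Discretizing $s$ to polynomially many values (Lipschitz continuity of $s\mapsto F_\beta(\vJ^{(p),\msf{mod}};\mu_s)/N$ on the event $\oGS(\vJ^{(p),\msf{mod}})=O(N)$, which holds with probability $1-o(1)$ by applying \cref{cor:weak-universality-finite-beta,lem:positive-to-zero-temp} to the pure model), union bounding, and integrating against $\rho$ — which for $\vJ^{(1),\msf{mod}}$ and $\vG^{(1)}$ agree up to negligible error since both norms concentrate at $\sqrt N$ — gives $|F_\beta(\wh J^{(p),\msf{mod}})-\bbE F_\beta(\wh G^{(p)})|=o(N)$ with probability $1-o(1)$; together with concentration of $F_\beta(\wh G^{(p)})$ around its mean this proves the claim for $\beta<\infty$. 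For $|s|$ near the endpoint $\|\vJ^{(1),\msf{mod}}\|_2\sqrt N$ the subsphere $\cS_s$ is low-dimensional but still centered at the delocalized $w_s$, so $F_\beta(\vJ^{(p),\msf{mod}};\mu_s)=H_N^{(p)}(w_s)+o(N)=o(N)$ by \cref{lem:concen-bound} (here $\bbE H_N^{(p)}(w_s)=0$), and these $s$ are handled directly. Finally $\beta=\infty$ reduces to large finite $\beta$ as in \cref{lem:positive-to-zero-temp,cor:positive-to-zero-temp}, using $\oGS(\wh J^{(p)})\le\oGS(\vJ^{(p)})+O(\lambda N)=O(N)$ and $\|\nabla\wh H_N(\bsig)\|\le PN^{-1/2}\oGS(\vJ^{(p)})+O(\lambda\sqrt N)=O(\sqrt N)$.

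The main obstacle is the rerun of \cref{sec:decomp,sec:add-input,sec:bound-dis} on the shifted subsphere $\cS_s$: one must verify that the fixed delocalized base shift $w_s$ — which, upon Taylor-expanding $H_N^{(p)}$ around the sub-centers, creates additional $p-j=1$ interaction terms — does not spoil the rotational-symmetry arguments of \cref{lem:deloc,lem:invar-upper-bound}, and that all error bounds stay uniform over the relevant range of $s$. This is essentially bookkeeping, since the geometry of $\cS_s$ is that of $\cS_N$ up to a constant translation, but it is the only place where verification genuinely beyond the main body is required.
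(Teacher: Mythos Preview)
Your approach is correct in outline and would work, but it takes a genuinely different and more laborious route than the paper. You slice $\cS_N$ along level sets of $\la\bsig,\vJ^{(1)}\ra$, note the spike is constant on each slice, and then \emph{rerun} the entire machinery of \cref{sec:decomp,sec:add-input,sec:bound-dis} on each subsphere $\cS_s$. This is sound --- the centers $w_s$ are delocalized, rotational symmetry is available within $(\vJ^{(1),\msf{mod}})^\perp$, and the bookkeeping you flag is indeed only bookkeeping --- but it duplicates a substantial amount of work.

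The paper instead uses convex duality to avoid this rerun entirely. Since $x\mapsto x^p$ is convex (for even $p$; odd $p$ via $|x|^p$), one has $x^p=\max_z\{pz^{p-1}x-(p-1)z^p\}$, so the spike $\lambda N(\la\bsig,\vJ^{(1)}\ra/N)^p$ is a pointwise maximum of \emph{linear} functions of $\la\bsig,\vJ^{(1)}\ra$. Setting $h=\lambda pz^{p-1}$, the tensor PCA Hamiltonian is thus a maximum over finitely many $h$ (discretized using the gradient bound \eqref{eq:gradient-bound}) of the standard mixed Hamiltonians $H_N^{(p)}(\bsig)+h\la\bsig,\vJ^{(1)}\ra$ plus a constant depending on $h$. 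But these are exactly the models with $\xi_h(t)=t^p+h^2t$, to which \cref{thm:main} applies directly. The tensor PCA free energy is then (approximately) a finite maximum of already-universal quantities, and universality follows immediately.

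So: your slicing is the additive/conditioning version of what the paper does multiplicatively via Legendre transform. The paper's route is shorter because it reduces to a black-box invocation of \cref{thm:main} rather than reopening its proof; your route has the advantage of being entirely self-contained and not relying on convexity of $x\mapsto x^p$, at the cost of the rerun you correctly identify as the main burden.
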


\begin{proof}
    As usual, we may assume the disorder has been truncated. For convenience we assume $p$ is even so the function $x\mapsto x^p$ is convex. 
    The case of odd $p$ can be handled using convexity of $x\mapsto |x|^p$. 
    Indeed replacing the last term in \eqref{eq:tensor-PCA-hamiltonian} by $\lambda N |\la \bsig,\vJ^{(1)}\ra/N|^p$ gives asymptotically the same free energy thanks to \cref{lem:concen-bound} and the reflection-symmetry of $\cS_N$ along the hyperplane orthogonal to $\vJ^{(1)}$.

    The point is to read off the tensor PCA free energy from models with covariance $\xi_h(t)=t^p+h^2 t$ for varying $h\in\bbR$, using convex duality.
    For $\alpha$ tending to $0$ slowly with $N$ with $1/\alpha\in\bbZ_+$, we let $h$ range over $\alpha\bbZ\cap [-1/\alpha,1/\alpha]$. 
    Theorem~\ref{thm:main} implies that for each such $h$:
    \[
    \plim_{N\to\infty}
    |F_{\beta}(\vJ^{(p)},h\vJ^{(1)})-F_{\beta}(\vG^{(p)},h\vG^{(1)})|/N=0.
    \]
    Using the convexity of $x\mapsto x^p$ and the gradient estimate \eqref{eq:gradient-bound}, the desired result easily follows.
    In particular one uses that 
    \[
    x^p 
    = 
    \max_{z\in\bbR}
    \Big\{
    (x-z)pz^{p-1}+z^p-z
    \Big\}
    .
    \]
    Taking $h=pz^{p-1}$ and using \eqref{eq:gradient-bound} gives an approximate representation of $F_{\beta}(\wh J^{(p)})$ as a maximum of finitely many shifts of ordinary free energies $F_{\beta}(\vJ^{(p)},h\vJ^{(1)})$.
    Since these are universal, the proof is completed.
\end{proof}

We note that the limiting free energy $\plim_{N\to\infty} F_{\beta}(\wh G^{(p)})/N$ can easily be read off from the Parisi formula as a maximum over bands with fixed correlation with $\vG^{(1)}$.

In the case of Gaussian disorder, the above model for tensor PCA was introduced in \cite{richard2014statistical} as a canonical example of a high-dimensional signal detection problem.
One for instance observes either $\wh H_N$ above or an ordinary pure $p$-spin Hamiltonian (i.e. with $\lambda=0$) and aims to distinguish between the two cases.
It is known that the log-likelihood ratio between the null and planted tensor PCA models is given by the normalized free energy of the observed Hamiltonian.
Building on this connection, \cite{jagannath2020statistical} proved that the threshold $\lambda_c$ above which the null and planted distribution are statistically distinguishable coincides with the replica-symmetry breaking threshold for the pure spherical $p$-spin model.
\cref{thm:tensor-PCA} shows $1$-sided universality (in both noise and signal) of the statistical distinguishability threshold, in the sense that simply computing the free energy continues to serve as a distinguishing test.
Of course, the opposite side cannot be universal in general, since the tensor entries could encode microscopic information to make detection trivial for any $\lambda>0$.

\section*{Acknowledgements}

Thanks to Reza Gheissari and Patrick Lopatto for helpful comments and discussions.

A portion of this research was conducted during the period Sawhney served as a Clay Research Fellow. Sawhney was also supported by NSF Graduate Research Fellowship Program DGE-1745302.

\bibliographystyle{amsplain0}
\bibliography{main.bib}
\end{document}